\definecolor{mygray}{RGB}{0,56,105}
\def\@seccntformat#1{\hspace*{0mm}%
 \protect\textup{\protect\@secnumfont
   \ifnum\pdfstrcmp{subsection}{#1}=0 \bfseries\fi
   \csname the#1\endcsname
   \protect\@secnumpunct
     }%
}
\numberwithin{equation}{section}
\newcommand{\assign}{:=}
\newcommand{\mathd}{\mathrm{d}}
\newcommand{\of}{:}
\newcommand{\tmabbr}[1]{#1}
\newcommand{\tmmathbf}[1]{\ensuremath{\boldsymbol{#1}}}
\newcommand{\tmname}[1]{\textsc{#1}}
\newcommand{\tmop}[1]{\ensuremath{\operatorname{#1}}}
\newcommand{\tmsamp}[1]{\textsf{#1}}
\newcommand{\tmstrong}[1]{\textbf{#1}}
\newcommand{\tmtextit}[1]{\emph{#1}}
\newcommand{\tmtextsc}[1]{\text{{\scshape{#1}}}}
\newcommand{\tmtextup}[1]{\text{{\upshape{#1}}}}
\newtheorem{lemma}{Lemma}
\newtheorem{proposition}{Proposition}
{\theoremstyle{remark}\newtheorem{remark}{Remark}}
\newtheorem{theorem}{Theorem}
\newcommand{\jdeg}{\mathrm{deg\,}}
\newcommand{\m}{\tmmathbf{m}}
\newcommand{\n}{\tmmathbf{n}}
\newcommand{\uu}{\tmmathbf{u}}
\newcommand{\vv}{\tmmathbf{v}}
\newcommand{\e}{\tmmathbf{e}}
\newcommand{\GG}{\mathcal{G}}
\newcommand{\MM}{\mathcal{M}}
\newcommand{\T}{\top}
\newcommand{\cpt}{\tmtextup{)}}
\newcommand{\opt}{\tmtextup{(}}
\newcommand{\Stwo}{\mathbb{S}}
\newcommand{\RR}{\mathbb{R}}
\newcommand{\ZZ}{\mathbb{Z}}
\newcommand{\NN}{\mathbb{N}}
\newcommand{\CC}{\mathbb{C}}
\newcommand{\grad}{\nabla}
\begin{document}

\title[On symmetry of energy minimizing harmonic-type maps]{On symmetry of energy minimizing harmonic-type maps\\
on cylindrical surfaces}

\author{Giovanni Di Fratta}

\author{Alberto Fiorenza}

\author{Valeriy Slastikov}

\begin{abstract}
  The paper concerns the analysis of global minimizers of a Dirichlet-type energy functional in the class of $\Stwo^2$-valued maps defined in cylindrical surfaces. The model naturally arises as a curved thin-film limit in the theories of nematic liquid crystals and micromagnetics. We show that minimal configurations are $z$-invariant and that energy minimizers in the class of weakly axially symmetric competitors are, in fact, axially symmetric. Our main result is a family of \emph{sharp} Poincar{\'e}-type inequality on the circular cylinder, which allows for establishing a nearly complete picture of the energy landscape. The presence of symmetry-breaking phenomena is highlighted and discussed. Finally, we provide a complete characterization of in-plane minimizers, which typically appear in numerical simulations for reasons we explain.
  
  \medskip\noindent\textbf{Keywords.} Poincaré inequality, harmonic maps, magnetic skyrmions
  
  \noindent\textbf{AMS subject classifications.} 35A23, 35R45, 49R05, 49S05, 82D40
\end{abstract}

{\maketitle}
\thispagestyle{empty}
\section{Introduction}
{\noindent}The interplay between geometry and topology plays a fundamental
role in many fields of applied science. The most basic examples include thin
nematic liquid crystal shells~{\cite{Napoli,Miller_2013,Serra_2016}} and
curvilinear magnetic
nanostructures~{\cite{gaididei2014curvature,streubel2016magnetism}}. Curvature
effects and topological constraints lead to unusual properties of the
underlying physical systems and promote the appearance of novel
microstructures, providing a promising way to design new materials with
prescribed properties.

In the last decade, magnetic systems with curvilinear shapes have been subject
to extensive experimental and theoretical research
({\tmabbr{cf.}}~{\cite{carbou2001thin,Slastikov2012,slastikov2005micromagnetics,Di_Fratta_2019,streubel2016magnetism,Di_Fratta_2020,Ignat_2021}}).
Recent advances in the fabrication of magnetic spherical hollow nanoparticles
and rolled-up nanomembranes with a cylindrical shape lead to the creation of
artificial materials with unexpected characteristics and numerous applications
in nanotechnology, including high-density data storage, magnetic logic, and
sensor devices
({\tmabbr{cf.}}~{\cite{hu2008core,Schmidt_2001,streubel2016magnetism}}).
Embedding planar structures in the three-dimensional space permits altering
their magnetic properties by tailoring their local curvature. The interplay
between geometry, topology, and Dzyaloshinskii--Moriya interaction (DMI) leads
to the formation of novel magnetic spin textures, e.g., chiral domain walls
and skyrmions~{\cite{BabaA2022, Davoli2020, Davoli2022, di2019weak,Di_Fratta_Mon_2022, muratov2017domain}}.
The curvature effects have been shown to play a crucial role in stabilizing these chiral
spin-textures. Spherical and cylindrical thin films are of particular interest
due to their simple geometry and capability to host {\emph{spontaneous}}
skyrmions (topologically protected magnetic structures) even in the absence of
DMI~{\cite{gaididei2014curvature,kravchuk2012out,streubel2016magnetism}}.

In what follows, occasionally, we are going to use the language of
micromagnetics. However, our mathematical results apply to
other physical systems (e.g., the Oseen-Frank theory of nematic liquid crystals).

\subsection{State of the art}It is well established that, when the thickness
of a thin shell is very small relative to the lateral size of the system, the
demagnetizing field interactions behave, at the leading order, as a local
shape-anisotropy, see
{\cite{gioia1997micromagnetics,carbou2001thin,di2019variational,Di_Fratta_2020}}.
In the context of a thin curvilinear shell (generated by extruding a regular
surface $\mathcal{M}$ in $\RR^3$ along the normal direction), the
leading-order contribution to the micromagnetic energy functional reads as
{\cite{Di_Fratta_2020,di2019variational}}:

\begin{equation}
  \mathcal{E}: \m \in H^1( \MM, \Stwo^2)
  \mapsto \int_{\MM} \left| \grad_{\xi} \tmmathbf{m} (\xi) \right|^2
  \mathd \xi + \alpha \int_{\MM} (\m (\xi) \cdot
  \n (\xi))^2 \mathd \xi, \label{eq:sphericshelllimitgen}
\end{equation}
where $\tmmathbf{n}$ is the normal field to the surface $\MM$, $\alpha
\in \RR$ is an effective anisotropy parameter accounting for both
{\emph{shape}} and {\emph{crystalline}} anisotropy, and $\grad_{\xi}$ is the
tangential gradient on $\MM$.

The role of $\alpha$ is easy to understand qualitatively. Uniform states are
the only local minimizers of $\mathcal{E}$ when $\alpha = 0$. For
$\tmop{large} \alpha > 0$, tangential vector fields are energetically favored;
for $\tmop{large} \alpha < 0$, i.e., when shape anisotropy prevails over
perpendicular crystal anisotropy, energy minimization prefers normal vector
fields.

An exact characterization of the minimizers of this problem is a
nontrivial task with far-reaching consequences for modern magnetic storage
technologies~{\cite{kravchuk2016topologically}}. Recently, a partial answer
about the structure of minimizers of $\mathcal{E}$ has been given for the case
$\MM= \Stwo^2$. It has been shown that (see {\cite{Di_Fratta_2019}})
\begin{enumerate}
 \item for any $\alpha \in \RR$, the normal vector fields $\pm
  \n$ are stationary points of the energy functional $\mathcal{E}$
  on the space $H^1 ( \Stwo^2, \Stwo^2)$; moreover, they are
  strict local minimizers for every $\alpha < 0$ and are unstable for $\alpha
  > 0$.
  
 \item When $\alpha \leqslant - 4$, the normal vector fields $\pm
  \n$ are the {\emph{only}} {\emph{global}} minimizers of
  $\mathcal{E}$.
\end{enumerate}

Also, in~{\cite{melcher2019curvature}} it is shown that for $\alpha \ll - 1$,
skyrmionic solutions topologically distinct from the ground state emerge as
excited states.

For $\alpha > 0$, the energy landscape of $\mathcal{E}$ is challenging to
describe. Indeed, topological obstructions ({\emph{hairy ball theorem}})
prevent the existence of purely tangential vector fields in $H^1(
\Stwo^2, \Stwo^2)$. Numerical simulations suggest that when $\alpha >
0$, the energy $\mathcal{E}$ can exhibit magnetic states with skyrmion number
$0$ or $\pm 1$ (see,
e.g.,~{\cite{kravchuk2012out,kravchuk2016topologically,sloika2017geometry}}).
Also, within the homotopy class $\left\{ \jdeg \m= 0 \right\}$, the
energy $\mathcal{E}$ favors the so-called {\emph{onion}} state if $\alpha$ is
sufficiently small, and the {\emph{vortex}} state otherwise.

Classifying the ground states in spherical thin films in the regime $\alpha>0$ is demanding. However, many of the difficulties one faces and the emerging symmetry-breaking phenomena are already present in the analysis of ground states and axially symmetric solutions in the more tractable geometry of a cylinder. This observation triggered our interest in the questions addressed in this paper and led to developing some techniques that we believe can be further improved to tackle ground states' analysis in more complex geometries.

\subsection{Contributions of present work}{\noindent}Let $\Gamma \subseteq
\RR^2$ be the image of a smooth Jordan curve $\zeta : [0, 2 \pi] \rightarrow
\Gamma$, and let $\mathcal{C} \assign I \times \Gamma$, $I \assign [- 1, 1]$,
be the cylindrical surfaces generated by $\Gamma$ (see
Figure~\ref{fig:Cylinder}). Given that $\m$ is $\Stwo^2$-valued, we
have that up to the constant term $- \alpha | \mathcal{C} |$, with $|
\mathcal{C} |$ being the area of $\mathcal{C}$, the minimization problem for
\eqref{eq:sphericshelllimitgen} is equivalent to the minimization of the
energy functional
\begin{equation}
  \m \in H^1( \mathcal{C}, \Stwo^2) \mapsto
  \int_{\mathcal{C}} \left| \grad_{\xi} \m (\xi) \right|^2 \mathd
  \xi - \alpha \int_{\mathcal{C}} | \m (\xi) \times \n
  (\xi) |^2 \mathd \xi . \label{eq:tempen2}
\end{equation}
The previous expression \eqref{eq:tempen2} is more convenient for the
following reason. When $\alpha = 0$, any constant $\Stwo^2$-valued vector
field is a minimizer with zero minimal energy. The scenario is still trivial
when $\alpha > 0$. There are only two minimizers in this regime, and these are
the constant vector fields $\pm \tmmathbf{e}_3 = \pm (0, 0, 1)$ whose
associated minimal energy is $- \alpha | \mathcal{C} |$. However, the
situation suddenly becomes engaging when $\alpha < 0$. This is the regime this
paper is devoted to, and working with \eqref{eq:tempen2} allows dealing with
nonnegative energies whereas \eqref{eq:sphericshelllimitgen} does not.
Therefore, we set $\alpha \assign - \kappa^2$, with $\kappa^2 \neq 0$, and,
from now on, we focus our investigations on the energy functional
\begin{equation}
  \mathcal{E} (\m) \assign \int_{\mathcal{C}} \left| \grad_{\xi}
  \m \right|^2 \mathd \xi + \kappa^2 \int_{\mathcal{C}} |
  \m \times \n |^2 \mathd \xi, \qquad \m \in H^1( \mathcal{C}, \Stwo^2) . \label{eq:micromagenfunonC}
\end{equation}
Here $\grad_{\xi}$ stands for the tangential gradient on $\mathcal{C}$, and
$\kappa^2$ is a positive constant that controls the perpendicular anisotropy's
strength. Note that, equivalently, the value of $\kappa^2$ controls the size
of the sample $\mathcal{C}$. Indeed, simple rescaling allows reducing the
analysis of \eqref{eq:micromagenfunonC} to a scaled cylinder and a different
value of $\kappa^2$. This is why when we later analyze the minimizers
$\mathcal{E}$ on {\emph{circular}} cylinders, we focus only on $\mathcal{C}
\assign I \times \Stwo^1$.
\begin{figure}[t]
  {\includegraphics[width=\textwidth]{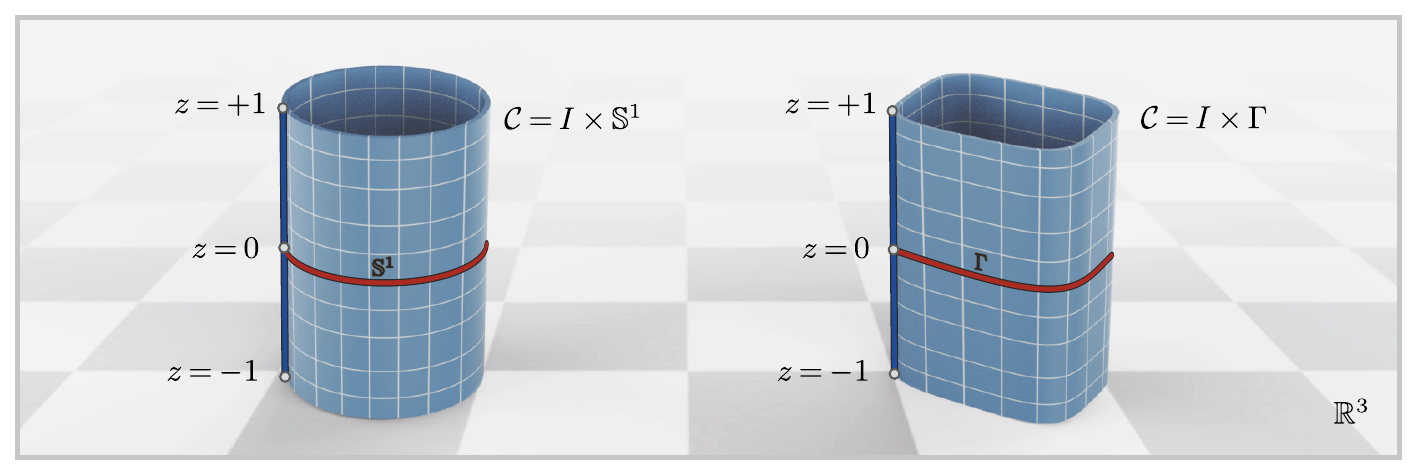}}
  \caption{\label{fig:Cylinder}The paper analyzes ground states of the energy
  functional $\mathcal{E}$ in the admissible class of $\Stwo^2$-valued maps
  defined on cylindrical surfaces $\mathcal{C}= I \times \Gamma$. After a
  general result on the $z$-invariance of the minimizers of $\mathcal{E}$ in
  $H^1( \mathcal{C}, \Stwo^2)$, we look for the analytic
  expression of the minimizers in circular cylinders (i.e., in the case
  $\Gamma = \Stwo^1$).
  }
\end{figure}

This paper's main aim concerns the analysis of global minimizers
of the energy functional \eqref{eq:micromagenfunonC} in the class of
$\Stwo^2$-valued maps defined in the {\emph{circular}} cylinder $\mathcal{C}=
I \times \Stwo^1$. The analysis we perform involves several steps.

First, for any $\kappa^2 > 0$, we show that minimizers of the energy
$\mathcal{E}$ defined in \eqref{eq:micromagenfunonC} are $z$-invariant. In
Proposition~\ref{Prop:dimred} we prove the result holds under the more general
framework of cylindrical surfaces of the type $\mathcal{C} \assign I \times
\Gamma$ where $I \assign [- 1, 1]$ and $\Gamma \subseteq \RR^2$ is the image
of a smooth Jordan curve $\zeta : [0, 2 \pi] \rightarrow \Gamma$ (see
Figure~\ref{fig:Cylinder}). Also, we prove that when $\mathcal{C}= I \times
\Stwo^1$, $z$-invariance of the minimizers holds in the restricted class of
{\emph{weakly}} axially symmetric configurations which are defined by the
condition that
\begin{equation}
  \int_{\Stwo^1} \m_{\bot} (z, \gamma) \mathd \gamma = 0 \quad
  \forall z \in I, \label{eq:was}
\end{equation}
where $\m_{\bot} \assign \m - (\m \cdot \e_3) \e_3$. It
is simple to show that every axially symmetric configuration satisfies
\eqref{eq:was} ({\tmabbr{cf.}}~Remark~\ref{eq:rmkaxsymm}). In
Theorem~\ref{cor:axsymmmin}, we prove that every minimizer of $\mathcal{E}$ in
the class of {\emph{weakly}} axially symmetric competitors is, in fact,
axially symmetric. The proof is based on a symmetrization argument in
conjunction with the classical Poincar{\'e}-Wirtinger inequality for null
average and periodic functions.

Second, we focus on the analysis of global minimizers of the energy
$\mathcal{E}$ in the unrestricted class $H^1( I \times \Stwo^1, \Stwo^2)$, i.e., when no {\emph{weak}} axial symmetry is assumed on the
competitors. Our main result is a family of sharp Poincar{\'e}-type
inequalities (see Theorem~\ref{thm:PoincIneq}), which allows us to establish
the following picture of the energy landscape of $\mathcal{E}$ (see
Theorem~\ref{thm:BFTCylinder}).
\begin{enumerate}
  \item If $\kappa^2 \geqslant 3$, the normal vector fields $\pm
  \n$ are the only global minimizers of the energy functional
  $\mathcal{E}$ in $H^1( \mathcal{C}, \Stwo^2)$.
  
  \item Moreover, they are strict local minimizers for every
  $\kappa^2 > 1$ and unstable for $0 < \kappa^2 < 1$. The constant vector
  fields $\pm \e_3$ are unstable for all $\kappa^2 > 0$.
\end{enumerate}
The sharp Poincar{\'e}-type inequality is stated in
Theorem~\ref{thm:PoincIneq} and states that for every $\kappa^2 > 0$ there
holds
\begin{equation}
  \int_{\Stwo^1} \left| \grad_{\gamma} \uu \right|^2 \mathd \gamma +
  \kappa^2 \int_{\Stwo^1} | \tmmathbf{u} \times \n |^2 \mathd \gamma
  \; \geqslant \; c^2_{\kappa} \int_{\Stwo^1} | \uu |^2 \mathd \gamma
  \quad \forall \uu \in H^1( \Stwo^1, \RR^3) .
  \label{eq:PIpreview}
\end{equation}
with $c^2_{\kappa} = 1$ if $\kappa^2 \geqslant 3$, $c^2_{\kappa} = \frac{1}{2}
(\kappa^2 - \omega_{\kappa}^2 + 4)$ if $0 < \kappa^2 \leqslant 3$, and
$\omega_{\kappa}^2 \assign \sqrt{\kappa^4 + 16}$. Our result also includes a
precise characterization of the minimizers for which the equality sign is
reached in \eqref{eq:PIpreview}. For the proof, we work in Fourier space.
While the frequencies decouple nicely, the vector-valued nature of $H^1(
\Stwo^1, \RR^3)$ elements, as well as the presence of the anisotropic
constant $\kappa^2$, has the effect that different space directions strongly
interact, and this requires careful analysis (see {\cite{Di_Fratta_2019}}).

Third, motivated by their importance in numerical simulations (see
Section~\ref{sec:numericalinplane} for a detailed discussion), we investigate
global minimizers of $\mathcal{E}$ in the class of in-plane configurations. We
show that if $\m_{\bot} \in H^1 (\Stwo^1, \Stwo^1)$ is the
{\emph{profile}} of a minimizer in $H^1 (\mathcal{C}, \Stwo^1)$ of the energy
functional $\mathcal{E}$, then either $\jdeg  \m_{\bot} = 0$ or $\jdeg 
\m_{\bot} = 1$. Indeed, among other things, in Theorem~\ref{thm:mainS1S1} we
show the existence of a threshold value $\kappa^2_{\ast}$ of the anisotropy
parameter for which the following energetic implications hold:
\begin{enumerate}
  \item If $\kappa^2 > \kappa_{\ast}^2$, then any global minimizer
  has degree one and, moreover, for every $\kappa^2 > 0$, the normal fields
  $\pm \n$ are the only two minimizers in the homotopy class
  $\left\{ \jdeg  \m_{\bot} = 1 \right\}$.
  
  \item If $\kappa^2 < \kappa_{\ast}^2$, then any global minimizer
  has degree zero, and an accurate analytic description is given in terms of
  elliptic integrals.
\end{enumerate}
The previous two points allow for a complete characterization of the energy
landscape of in-plane minimizers. The normal vector fields $\pm \n$
are the only two in-plane energy minimizers when $\kappa^2 > \kappa_{\ast}^2$
and the common minimum value of the energy is $2 \pi$. Instead, when $\kappa^2
< \kappa_{\ast}^2$, the minimal energy depends on $\kappa^2$, and the precise
minimal values, as well as the analytic expressions of the minimizers, are
given in terms of elliptic integrals (see
\eqref{eq:minmdegzero}-\eqref{eq:minendegzero}).

\subsection{Outline}The paper is organized as follows. In
Section~\ref{sec:ASM} we prove that minimal configurations are $z$-invariant
({\tmabbr{cf.}}~Proposition~\ref{Prop:dimred}) and that every minimizer of
$\mathcal{E}$ in the class of {\emph{weakly}} axially symmetric competitors
is, in fact, axially symmetric ({\tmabbr{cf.}}~Theorem~\ref{cor:axsymmmin}).
Section~\ref{se:SPI} is devoted to the analysis of global minimizers of the
energy $\mathcal{E}$ in the unrestricted class $H^1( I \times \Stwo^1,
\Stwo^2)$. Our main result is a family of sharp Poincar{\'e}-type
inequalities ({\tmabbr{cf.}}~Theorem~\ref{thm:PoincIneq}), which allows
for establishing a nearly complete picture of the energy landscape of
$\mathcal{E}$ ({\tmabbr{cf.}}~Theorem~\ref{thm:BFTCylinder}). Finally, in
Section~\ref{sec:numericalinplane}, we provide a complete characterization of
the energy landscape of in-plane minimizers of $\mathcal{E}$.

\subsection{Notation}In what follows, for a given embedded submanifold
$\MM$ of $\RR^3$, we indicate by $H^1( \MM, \RR^m)$, $m \geqslant 1$, the Sobolev space of vector-valued functions
defined on $\MM$, endowed with the norm
({\tmabbr{cf.}}~{\cite{Agranovich_2015}})
\begin{equation}
  \| \uu \|^2_{H^1( \MM, \RR^m)} \assign
  \int_{\MM} | \uu (\xi) |^2 \mathd \xi + \int_{\MM}
  \left| \grad_{\xi} \uu (\xi) \right|^2 \mathd \xi \,,
\end{equation}
where $\grad_{\xi} \uu$ is the tangential gradient of $\uu$
on $\MM$. We write $H^1( \MM, \Stwo^2)$ for the
metric subspace of $H^1( \MM, \RR^3 )$ consisting of
vector-valued functions with values in the unit $2$-sphere of $\RR^3$. When
$\MM \assign \Stwo^1$ is the unit $1$-sphere, $H^1( \Stwo^1,
\RR^m )$ identifies to the Sobolev space $H^1_{\sharp}( [- \pi,
\pi], \RR^m)$ consisting of $2 \pi$-periodic vector-valued functions
and endowed with the norm
\begin{equation}
  \| \uu \|^2_{H^1_{\sharp} ( [- \pi, \pi], \RR^m )}
  \assign \int_{- \pi}^{\pi} | \uu (t) |^2 \mathd t + \int_{-
  \pi}^{\pi} | \partial_t \uu (t) |^2 \mathd t.
\end{equation}
Finally, we denote by $H^1_{\sharp} ( [- \pi, \pi], \Stwo^1)$ and
$H^1_{\sharp} ( [- \pi, \pi], \Stwo^2)$ the metric subspaces of
the Sobolev space $H^1_{\sharp} ( [- \pi, \pi], \RR^m )$ consisting, respectively, of
$\Stwo^1$-valued and $\Stwo^2$-valued periodic functions.

\section{Symmetry properties of the minimizers}\label{sec:ASM}

{\noindent}Our first result, stated in the next Proposition~\ref{Prop:dimred}, shows
that for any $\kappa^2 > 0$ every minimizer $\m (z, t)$ of the energy
$\mathcal{E}$ in \eqref{eq:micromagenfunonC} is $z$-invariant. We state the
result in the more general framework of cylindrical surfaces of the type
$\mathcal{C} \assign I \times \Gamma$ where $I \assign [- 1, 1]$ and $\Gamma
\subseteq \RR^2$ is the image of a smooth Jordan curve $\zeta : [0, 2 \pi]
\rightarrow \Gamma$. Note that, by parameterizing the cylinder $\mathcal{C}$
through the map
\begin{equation}
  \gamma (z, t) \assign (z,\zeta (t)),
\end{equation}
we can rewrite the energy functional \eqref{eq:micromagenfunonC} in the form
\begin{equation}
  \mathcal{E} (\m) = \int_{- 1}^1 \int_{\Gamma} \left| \grad_{\zeta}
  \m (z, \zeta) \right|^2 + | \partial_z \m (z, \zeta) |^2
  \mathd \zeta \mathd z + \kappa^2 \int_{- 1}^1 \int_{\Gamma} \, |
  \m (z, \zeta) \times \n (\zeta) |^2 \mathd \zeta \mathd
  z. \label{eq:micromagenfunonCccoordgen}
\end{equation}
This equivalent expression of the energy in \eqref{eq:micromagenfunonC} is used in the proof of the following result on the $z$-invariance of energy minimizers.
\begin{proposition}[\tmname{$z$-invariance of energy minimizers}]
  \label{Prop:dimred}Let $\m \in H^1( \mathcal{C}, \Stwo^2
 )$ be a {\opt}global{\cpt} minimizer of the micromagnetic energy
  functional {\emph{\eqref{eq:micromagenfunonC}}} with $\mathcal{C} \assign I
  \times \Gamma$ and $\Gamma \subseteq \RR^2$ the image of a smooth Jordan
  curve $\zeta : [0, 2 \pi] \rightarrow \Gamma$. Then there exists a minimizer
  $\m_{\ast} \in H^1( \mathcal{C}, \Stwo^2)$ of $\mathcal{E}$
  in {\emph{\eqref{eq:micromagenfunonCccoordgen}}}, built from $\m$, which is
  $z$-invariant, i.e., such that
  \begin{equation}
    \m_{\ast} (z, \zeta) =\uu_{\ast} (\zeta)
    \label{eq:zinvm}
  \end{equation}
  for some $\uu_{\ast} \in H^1( \Gamma, \Stwo^2)$.
  Actually, every minimizer of $\mathcal{E}$ has the form
  {\emph{\eqref{eq:zinvm}}} for some minimizer $\uu_{\ast} \in H^1( \Gamma, \Stwo^2)$ of the reduced energy
  \begin{equation}
    \mathcal{F} (\uu) \assign \int_{\Gamma} \left| \grad_{\zeta}
    \uu (\zeta) \right|^2 \mathd \zeta \; + \; \kappa^2 \int_{\Gamma}
    | \uu (\zeta) \times \n (\zeta) |^2 \mathd \zeta .
    \label{eq:reden}
  \end{equation}
\end{proposition}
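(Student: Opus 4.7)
The plan is to use a Fubini/slicing argument that exploits the non-negativity of the vertical energy $\int_{\mathcal{C}}|\partial_z\m|^2$ made visible by the split form \eqref{eq:micromagenfunonCccoordgen}. Introduce the horizontal slice energy
\begin{equation*}
E(z) \assign \int_{\Gamma}\left|\grad_{\zeta}\m(z,\zeta)\right|^2\mathd\zeta + \kappa^2\int_{\Gamma}|\m(z,\zeta)\times\n(\zeta)|^2\mathd\zeta,
\end{equation*}
so that $\mathcal{E}(\m) = \int_{I} E(z)\,\mathd z + \int_{I}\!\!\int_{\Gamma}|\partial_z \m|^2\,\mathd\zeta\,\mathd z$. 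The strategy is first to freeze $\m$ at an energetically optimal $z$-slice, producing a $z$-invariant competitor with no worse energy, and then to use the minimality of $\m$ to force the vertical contribution to vanish.

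First I would apply Fubini: for a.e.\ $z\in I$ the slice $\m(z,\cdot)$ lies in $H^1(\Gamma,\Stwo^2)$, and $z\mapsto E(z)$ is integrable on $I$. By the mean-value property, the set $\{z\in I : E(z)\leqslant \tfrac{1}{|I|}\int_I E\}$ has positive measure, so its intersection with the Fubini full-measure set is nonempty. Picking any $z_{\ast}$ in this intersection, set $\uu_{\ast}\assign\m(z_{\ast},\cdot)\in H^1(\Gamma,\Stwo^2)$ and define $\m_{\ast}(z,\zeta)\assign\uu_{\ast}(\zeta)$. Then $\m_{\ast}\in H^1(\mathcal{C},\Stwo^2)$ is manifestly $z$-invariant, $\partial_z \m_{\ast}\equiv 0$, and
\begin{equation*}
\mathcal{E}(\m_{\ast}) \;=\; |I|\,E(z_{\ast}) \;\leqslant\; \int_{I} E(z)\,\mathd z \;\leqslant\; \mathcal{E}(\m),
\end{equation*}
which produces the desired $z$-invariant minimizer built from $\m$, together with $\mathcal{E}(\m_{\ast})=2\mathcal{F}(\uu_{\ast})$.

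Second, when $\m$ is itself a global minimizer the chain above must be saturated. The rightmost equality forces $\int_{\mathcal{C}}|\partial_z\m|^2=0$, whence $\partial_z\m=0$ distributionally on $\mathcal{C}$, so $\m(z,\zeta)=\uu_{\ast}(\zeta)$ for some $\uu_{\ast}\in H^1(\Gamma,\Stwo^2)$ and $\mathcal{E}(\m)=2\mathcal{F}(\uu_{\ast})$. The minimality of $\m$ in $H^1(\mathcal{C},\Stwo^2)$ then transfers to the minimality of $\uu_{\ast}$ for $\mathcal{F}$: any strictly better competitor $\uu'\in H^1(\Gamma,\Stwo^2)$ would lift through $(z,\zeta)\mapsto\uu'(\zeta)$ to a strictly better competitor for $\mathcal{E}$, contradicting the assumption.

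The only delicate step is the simultaneous selection of $z_{\ast}$ satisfying both the a.e.\ Fubini regularity of the slice (so that $\m(z_{\ast},\cdot)$ is a genuine element of $H^1(\Gamma,\Stwo^2)$ with the correct pointwise constraint) and the mean-value bound on $E$; this is handled by the elementary observation that a full-measure set meets every positive-measure set. Beyond that, the argument is a routine consequence of the orthogonal decomposition of $|\grad_{\xi}\m|^2$ into its tangential-to-$\Gamma$ part and its $\partial_z$ part made explicit in \eqref{eq:micromagenfunonCccoordgen}, together with the non-negativity of the latter.
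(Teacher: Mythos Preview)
Your argument is correct and follows the same slicing strategy as the paper: define the horizontal slice energy, freeze $\m$ at an optimal slice to build a $z$-invariant competitor, and then use minimality to force $\int_{\mathcal{C}}|\partial_z\m|^2=0$. The difference lies in how the optimal slice $z_\ast$ is selected. The paper first invokes regularity theory for almost-harmonic maps (smoothness of minimizers up to the boundary, via \cite{Moser2005}) so that the slice energy $\Phi(z)$ is \emph{continuous} on $[-1,1]$ and a genuine minimum point $z_\ast\in\arg\min\Phi$ exists. You bypass regularity entirely: Fubini gives a full-measure set of $z$ for which $\m(z,\cdot)\in H^1(\Gamma,\Stwo^2)$ with the slice derivative agreeing with the restricted tangential derivative, and the mean-value inequality gives a positive-measure set where $E(z)$ is at most its average; intersecting the two yields a workable $z_\ast$ with $E(z_\ast)\leqslant\tfrac{1}{|I|}\int_I E$, which is all the argument needs.

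This is a genuine simplification. Your route is more elementary and more robust (it would go through unchanged for integrands where regularity of minimizers is unknown or false), and it also sidesteps the alternative approximation-by-smooth-maps argument the paper later supplies when re-proving the proposition under the weakly axially symmetric constraint. What the paper's approach buys is a slightly sharper intermediate statement (the slice energy actually attains its minimum), but this is not needed for the conclusion. One cosmetic remark: in your second paragraph you reuse the symbol $\uu_\ast$ for the profile arising from $\partial_z\m=0$, which may or may not coincide with the slice $\m(z_\ast,\cdot)$ fixed earlier; since you immediately write ``for some $\uu_\ast$'' this is harmless, but distinct notation would be cleaner.
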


\begin{remark}
  In general, $z$-invariance does not hold for critical points of the energy.
  In fact, when $\mathcal{C} \assign I \times \Stwo^1$ and $\kappa^2 = 1$, the
  helices satisfy the Euler--Lagrange equations associated with $\mathcal{E}$
  and are not $z$-invariant ({\tmabbr{cf.}}~Proposition
  \ref{prop:nullavperp}). The observation implies that the helical
  configurations predicted in {\cite{streubel2016magnetism}} are critical
  points of the energy, but not ground states.
\end{remark}

\begin{proof}
  We use a symmetrization argument. Let $\m$ be a minimizer of
  $\mathcal{E}$, and let us consider the function (note that, $\n
  (z, \zeta) =\n (\zeta)$ is $z$-invariant)
  \begin{equation}
    \Phi : z \in I \assign [- 1, 1] \mapsto \int_{\Gamma} \left| \grad_{\zeta}
    \m (z, \zeta) \right|^2 \mathd \zeta + \kappa^2 \int_{\Gamma} |
    \m (z, \zeta) \times \n (\zeta) |^2 \mathd \zeta .
    \label{eq:defPhi}
  \end{equation}
  In terms of $\Phi$ the energy functional \eqref{eq:micromagenfunonC} reads
  as
  \begin{equation}
    \mathcal{E} (\m) = \int_{- 1}^1 \left( \Phi (z) + \int_{\Gamma}
    | \partial_z \m (z, \zeta) |^2 \mathd \zeta \right) \mathd z.
    \label{eq:toregremark}
  \end{equation}
  Note that since $\m$ minimizes $\mathcal{E}$ on the two-dimensional surface
$\mathcal{C}$, $\m$ is smooth in $\mathcal{C}$. Indeed, the Euler-Lagrange
equations for $\m$ fit into the class of almost harmonic maps treated in
{\cite[Chapter~4]{Moser2005}}. In particular,
({\tmabbr{cf.}}~{\cite[Theorem~4.2]{Moser2005}}), $\m$ is H{\"o}lder
continuous and, therefore, by the usual bootstrap argument, smooth in the
interior of $\mathcal{C}$. Moreover, through a classical reflection argument (across
any of the two boundary components in $\partial \mathcal{C}= \partial I \times
\Gamma$) one can easily show that minimizers are smooth up to the boundary.
In particular, $\Phi$ is continuous on $[- 1, 1]$
  and $\tmop{argmin}_{z \in [- 1, 1]} \Phi (z) \neq \emptyset$. We arbitrarily
  choose a point $z_{\ast} \in \tmop{argmin}_{z \in [- 1, 1]} \Phi (z)$ and,
  with that, we define the $z$-invariant configuration
  \begin{equation}
    \m_{\ast} (z, \zeta) \assign \m (z_{\ast}, \zeta)
    \quad \text{for every } (z, \zeta) \in I \times \Gamma .
    \label{eq:defmstar}
  \end{equation}
  Note that, since $\m$ is smooth in $\mathcal{C}$,
  $\m_{\ast}$ is well-defined. Taking into account that for every
  $\xi \assign (z, \zeta) \in I \times \Gamma$ we have
  \begin{equation}
    \left| \grad_{\xi} \m (z, \zeta) \right|^2 = \left|
    \grad_{\zeta} \m (z, \zeta) \right|^2 + | \partial_z
    \m (z, \zeta) |^2
  \end{equation}
  with $\grad_{\zeta} \m$ the tangential gradient on $\zeta$, from
  \eqref{eq:micromagenfunonCccoordgen} and \eqref{eq:defPhi} we get that
  \begin{align}
    \mathcal{E} (\m_{\ast}) =\; & \int_{- 1}^1 \int_{\Gamma} \left|
    \grad_{\zeta} \m_{\ast} (z, \zeta) \right|^2 \mathd \zeta \mathd
    z + \kappa^2 \int_{- 1}^1 \int_{\Gamma} | \m_{\ast} (z, \zeta)
    \times \n (\zeta) |^2 \mathd \zeta \mathd z \nonumber\\
    =\; & \int_{- 1}^1 \int_{\Gamma} \left| \grad_{\zeta} \m
    (z_{\ast}, \zeta) \right|^2 \mathd \zeta \mathd z + \kappa^2 \int_{- 1}^1
    \int_{\Gamma} | \m (z_{\ast}, \zeta) \times \n (\zeta)
    |^2 \mathd \zeta \mathd z \nonumber\\
    =\; & \int_{- 1}^1 \Phi (z_{\ast}) \mathd z \nonumber\\
     \leqslant\; & \int_{- 1}^1 \Phi (z) \mathd z + \int_{- 1}^1
    \int_{\Gamma} | \partial_z \m (z, \zeta) |^2 \mathd \zeta \mathd
    z \nonumber\\
    =\; & \mathcal{E} (\m) .  \label{eq:chaininequalities}
  \end{align}
  Hence, if $\m$ is a minimizer in $H^1( \mathcal{C}, \Stwo^2
 )$ of \eqref{eq:micromagenfunonC} then so is the $z$-invariant
  configuration $\m_{\ast} (z, \zeta) \assign \m
  (z_{\ast}, \zeta)$. Moreover, if $\m$ is any minimizer in $H^1( \mathcal{C}, \Stwo^2)$, then, with $\m_{\ast}$
  defined as in \eqref{eq:defmstar}, we get that $\mathcal{E}
  (\m_{\ast}) =\mathcal{E} (\m)$. This entails that all
  the inequalities in \eqref{eq:chaininequalities} are, in fact, equalities.
  Therefore,
  \begin{equation}
    \int_{- 1}^1 \int_{\Gamma} | \partial_z \m (z, \zeta) |^2 \mathd
    \zeta \mathd z = 0,
  \end{equation}
  from which we conclude that $\m$ is $z$-invariant. This completes
  the proof.
\end{proof}

Since we are interested in the symmetry-breaking phenomena of minimizers, we want
to focus on the symmetric case when $\Gamma$ is unit circle $\Stwo^1$.
Parameterizing the cylinder $\mathcal{C} \assign I \times \Stwo^1$ through the
map
\begin{equation}
  \gamma (z, t) \assign (\cos t) \tmmathbf{e}_1 + (\sin t) \tmmathbf{e}_2 +
  z\tmmathbf{e}_3, \quad t \in [- \pi, \pi]
\end{equation}
with $\tmmathbf{e}_1, \tmmathbf{e}_2, \tmmathbf{e}_3$ the standard basis of
$\RR^3$, we can rewrite \eqref{eq:micromagenfunonCccoordgen} in the following
form
\begin{equation}
  \mathcal{E} (\m) = \int_{- 1}^1 \int_{- \pi}^{\pi} | \partial_t
  \m (z, t) |^2 + | \partial_z \m (z, t) |^2 \mathd t
  \mathd z + \kappa^2 \int_{- 1}^1 \int_{- \pi}^{\pi} | \m (z, t)
  \times \n (t) |^2 \mathd z \mathd t
  \label{eq:micromagenfunonCccoord}
\end{equation}
where we made the common abuse of notation
\begin{equation}
  \m (z, t) \assign (\m \circ \gamma) (z, t), \qquad
  \n (t) \assign (\n \circ \gamma) (z, t) = (\cos t, \sin
  t, 0) .
\end{equation}
According to Proposition \ref{Prop:dimred}, the energy landscape associated
with \eqref{eq:micromagenfunonC} is completely characterized as soon as one
describes the minimizers in $H^1( \Stwo^1, \Stwo^2)$ of the
energy functional ({\tmabbr{cf.}} \eqref{eq:reden})
\begin{equation}
  \mathcal{F} (\uu) \assign \int_{\Stwo^1} \left| \grad_{\gamma}
  \uu (\gamma) \right|^2 \mathd \gamma \; + \; \kappa^2
  \int_{\Stwo^1} | \uu (\gamma) \times \n (\gamma) |^2
  \mathd \gamma . \label{eq:redenmain}
\end{equation}
Note that, in terms of the standard (conformal) parameterization of $\Stwo^1$
given by $\gamma : t \in [- \pi, \pi] \mapsto (\cos t) \tmmathbf{e}_1 + (\sin
t) \tmmathbf{e}_2$, the energy functional $\mathcal{F}$ reads as
\begin{equation}
  \mathcal{F} (\uu) = \int_{- \pi}^{\pi} | \partial_t
  \uu (t) |^2 \mathd t + \kappa^2 \int_{- \pi}^{\pi} | \uu
  (t) \times \n (t) |^2 \mathd t  \label{eq:1denS1}
\end{equation}
with, again, the convenient abuse of notation
\begin{equation}
  \uu (t) \assign (\uu \circ \gamma) (t), \qquad
  \n (t) \assign (\n \circ \gamma) (t) = (\cos t, \sin t,
  0) . \label{eq:localpar}
\end{equation}
For the next result, stated in Proposition \ref{prop:nullavperp}, we introduce
further notation. For each $\uu \in H^1( \Stwo^1, \Stwo^2
)$ we denote by $\uu_{\bot}$ the projection of $\uu$
on $\RR^2 \times \{ 0 \}$, namely, $\uu_{\bot} \assign (u_1, u_2, 0)$
if $\uu= (u_1, u_2, u_3)$. Also, we denote by
\begin{equation}
  \langle \uu_{\bot} \rangle \assign \frac{1}{2 \pi} \int_{-
  \pi}^{\pi} \uu_{\bot} (t) \mathd t
\end{equation}
the average of $\uu_{\bot}$ on $\Stwo^1$ and, for any $\theta \in [-
\pi, \pi]$, we set
\begin{equation}
  \uu_{\theta} (t) \assign R (t) \left(\begin{array}{c}
    \sin \theta\\
    0\\
    \cos \theta
  \end{array}\right) = (\sin \theta) \n+ (\cos \theta)
  \tmmathbf{e}_3, \qquad R (t) \assign \left(\begin{array}{ccc}
    \cos t & - \sin t & 0\\
    \sin t & \cos t & 0\\
    0 & 0 & 1
  \end{array}\right) . \label{eq:utheta}
\end{equation}
For every $t \in [- \pi, \pi]$ the action of $R (t)$ is a rotation through an
angle $t$ about the $z$-axis.

In order to prove Proposition~\ref{prop:nullavperp} we need the sharp form of
the Poincar{\'e}-Wirtinger inequality on $\Stwo^1$ that we recall here; its
proof is a trivial application of Parseval's theorem for Fourier series and is
therefore omitted.

\begin{proposition}[\tmname{Poincar{\'e}-Wirtinger inequality}]~\label{prop:SPI}If $\uu \in
  H^1_{\sharp} ( [- \pi, \pi], \RR^2)$ is null-average {\opt}i.e.,
  $\langle \uu_{\bot} \rangle = 0${\cpt} then
  \begin{equation}
    \int_{- \pi}^{\pi} | \uu (t) |^2 \leqslant \int_{- \pi}^{\pi} |
    \partial_t \uu (t) |^2 . \label{eq:SPI}
  \end{equation}
  The minimizer is reached when $\uu (t) =\tmmathbf{c}_1 \cos t
  +\tmmathbf{c}_2 \sin t$, for arbitrary constant vectors $\tmmathbf{c}_1,
  \tmmathbf{c}_2 \in \RR^2$.
\end{proposition}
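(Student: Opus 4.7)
My plan is to follow the Fourier series approach suggested in the excerpt. Since $\uu \in H^1_{\sharp}([-\pi,\pi], \RR^2)$ is $2\pi$-periodic, I expand it componentwise as
\[
\uu(t) = \sum_{n \in \ZZ} \hat{\uu}_n\, e^{int}, \qquad \hat{\uu}_n \in \CC^2,
\]
with reality of $\uu$ forcing $\hat{\uu}_{-n} = \overline{\hat{\uu}_n}$. The null-average hypothesis $\langle \uu_{\bot}\rangle = 0$ is precisely the statement $\hat{\uu}_0 = 0$, so the sum effectively runs over $n \in \ZZ \setminus \{0\}$.

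Parseval's identity applied to $\uu$ and to $\partial_t \uu$ (whose $n$-th Fourier coefficient is $in \hat{\uu}_n$) then yields
\[
\int_{-\pi}^{\pi} | \uu(t) |^2 \mathd t = 2\pi \sum_{n \neq 0} | \hat{\uu}_n |^2, \qquad
\int_{-\pi}^{\pi} | \partial_t \uu(t) |^2 \mathd t = 2\pi \sum_{n \neq 0} n^2 | \hat{\uu}_n |^2.
\]
Comparing term by term and using $n^2 \geqslant 1$ for every nonzero integer $n$ immediately gives \eqref{eq:SPI}. There is essentially no obstacle to overcome: the only point worth a remark is that in the vector-valued setting Parseval is applied coordinate-wise, a linear step that causes no interaction between the two directions and hence produces no structural complication.

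For the equality case, saturation of $n^2 \geqslant 1$ in the above series forces $\hat{\uu}_n = 0$ for every $|n| \geqslant 2$, so that $\uu(t) = \hat{\uu}_1 e^{it} + \hat{\uu}_{-1} e^{-it}$. Writing $\hat{\uu}_1 = \tfrac{1}{2}(\tmmathbf{c}_1 - i\tmmathbf{c}_2)$ with $\tmmathbf{c}_1, \tmmathbf{c}_2 \in \RR^2$ and invoking $\hat{\uu}_{-1} = \overline{\hat{\uu}_1}$, one recovers the stated minimizer form $\uu(t) = \tmmathbf{c}_1 \cos t + \tmmathbf{c}_2 \sin t$, which concludes the argument.
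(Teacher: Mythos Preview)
Your proof is correct and follows precisely the route the paper indicates: a direct application of Parseval's theorem to the Fourier expansion of a null-average periodic function, together with $n^2\geqslant 1$ for $n\neq 0$, and identification of the equality case via the surviving $|n|=1$ modes. The paper omits the proof as ``trivial'', so your write-up simply spells out the intended argument.
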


We can now state Proposition \ref{prop:nullavperp} and Theorem
\ref{cor:axsymmmin}, which are our main results about axially symmetric
minimizers. Their proof is given at the end of this section.

\begin{proposition}[\tmname{axially symmetric energy minimizers}]
  \label{prop:nullavperp}In the class of configurations $\uu \in H^1( \Stwo^1, \Stwo^2)$ such that $\langle \uu_{\bot}
  \rangle = 0$, the only global minimizers of {\emph{\eqref{eq:redenmain}}}
  are given by
  \begin{equation}
    \left\{ \begin{array}{llll}
      \uu & = & \pm \tmmathbf{e}_3 & \text{if } 0 < \kappa^2 <
      1,\\
      \uu & = & \uu_{\theta} \quad & \text{if } \kappa^2
      = 1,\\
      \uu & = & \pm \n \quad & \text{if } \kappa^2 > 1,
    \end{array} \right. \label{eq:charnullavm}
  \end{equation}
  with $\uu_{\theta}$ given by {\emph{\eqref{eq:utheta}}}. Thus, if
  $0 < \kappa^2 < 1$ or $\kappa^2 > 1$, there exist only two minimizers, while
  when $\kappa^2 = 1$ there exist infinitely many minimizers described by
  $\uu_{\theta}$ with $\theta$ chosen arbitrarily in $[- \pi, \pi]$.
  The corresponding values of the minimal energies are given by
  \begin{equation}
    \left\{ \begin{array}{ll}
      2 \pi \kappa^2 & \text{if } 0 < \kappa^2 \leqslant 1,\\
      2 \pi & \text{if } \kappa^2 \geqslant 1.
    \end{array} \right.
  \end{equation}
\end{proposition}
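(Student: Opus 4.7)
My plan is to work in the rotating frame on the circle. With $\n^{\perp}(t) := (-\sin t, \cos t, 0)$, so that $\partial_t \n = \n^{\perp}$ and $\partial_t \n^{\perp} = -\n$, every $\uu \in H^1( \Stwo^1, \Stwo^2)$ admits the unique decomposition $\uu = a\n + b\n^{\perp} + c\e_3$ with $a^2 + b^2 + c^2 = 1$. A direct calculation plus the periodic integration by parts of $2(ab' - a'b)$ yields the identities $|\partial_t \uu|^2 = (a')^2 + (b')^2 + (c')^2 + a^2 + b^2 + 2(ab' - a'b)$ and $|\uu \times \n|^2 = b^2 + c^2 = 1 - a^2$, hence the two equivalent reformulations
\begin{align*}
\mathcal{F}(\uu) &= 2\pi + \int_{-\pi}^{\pi}\!\!\left[(a')^2 + (b')^2 + (c')^2 + \kappa^2 b^2 + (\kappa^2-1) c^2 - 4 a'b\right] dt,\\
\mathcal{F}(\uu) &= 2\pi\kappa^2 + \int_{-\pi}^{\pi}\!\!\left[(a')^2 + (1-\kappa^2) a^2 + (b')^2 + b^2 + (c')^2 - 4 a'b\right] dt,
\end{align*}
which I would use respectively in the regimes $\kappa^2 \ge 1$ and $\kappa^2 \le 1$.

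For $0 < \kappa^2 \le 1$ the argument is soft. Since $\langle \uu_{\bot} \rangle = 0$ by hypothesis, Proposition~\ref{prop:SPI} gives $\int | \partial_t \uu_{\bot}|^2 \ge \int | \uu_{\bot} |^2$. Combined with the pointwise identities $|\uu_{\bot}|^2 + u_3^2 = 1$ and $|\uu \times \n|^2 = (\uu_{\bot} \cdot \n^{\perp})^2 + u_3^2$, this yields
\begin{equation*}
\mathcal{F}(\uu) \;\ge\; \int | \partial_t \uu_{\bot}|^2 + \kappa^2 \int\left[(\uu_{\bot}\cdot\n^{\perp})^2 + u_3^2\right] \;\ge\; \kappa^2 \int(|\uu_{\bot}|^2 + u_3^2) \;=\; 2\pi\kappa^2.
\end{equation*}
When $\kappa^2 < 1$, the bound $\int | \uu_{\bot} |^2 \ge \kappa^2 \int | \uu_{\bot} |^2$ is strict unless $\uu_{\bot} \equiv 0$, which (together with $c' \equiv 0$) forces $\uu = \pm \e_3$. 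When $\kappa^2 = 1$, equality in Proposition~\ref{prop:SPI} forces $\uu_{\bot} = \tmmathbf{c}_1 \cos t + \tmmathbf{c}_2 \sin t$, and the simultaneous vanishing condition $\uu_{\bot} \cdot \n^{\perp} \equiv 0$ then collapses $\uu_{\bot}$ to a scalar multiple of $\n$ by direct comparison of trigonometric coefficients, recovering the family $\uu_{\theta}$.

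For $\kappa^2 > 1$ the naive Poincar{\'e} bound goes the wrong way, and I would pass to Fourier: writing $a = \sum_k a_k e^{ikt}$ and similarly for $b, c$, Parseval together with $\int a' b = -4\pi \sum_{k\ge 1} k\,\Im(a_k \bar{b_k})$ rearranges $\mathcal{F}(\uu) - 2\pi$ as $2\pi \kappa^2 |b_0|^2 + 2\pi(\kappa^2 - 1)|c_0|^2 + \sum_{k \ge 1} A_k$, where $A_k = 4\pi k^2 |a_k|^2 + 4\pi(k^2 + \kappa^2)|b_k|^2 + 4\pi(k^2 + \kappa^2 - 1)|c_k|^2 + 16\pi k\, \Im(a_k \bar{b_k})$. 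Noting that $\uu_{\bot} \leftrightarrow (a + i b) e^{it}$ under the standard identification $\RR^2 \cong \CC$, the hypothesis $\langle \uu_{\bot} \rangle = 0$ is equivalent to the single complex constraint $a_1 = i b_1$; this produces $|a_1| = |b_1|$ and $\Im(a_1 \bar{b_1}) = |b_1|^2$, so $A_1 = 4\pi(6 + \kappa^2)|b_1|^2 + 4\pi \kappa^2 |c_1|^2 \ge 0$. For $k \ge 2$, the AM-GM bound $|16\pi k\, \Im(a_k \bar{b_k})| \le 8\pi k(|a_k|^2 + |b_k|^2)$ gives $A_k \ge 4\pi k(k-2)|a_k|^2 + 4\pi((k-1)^2 - 1 + \kappa^2)|b_k|^2 + 4\pi(k^2 + \kappa^2 - 1)|c_k|^2 \ge 0$, while the $k=0$ contribution is nonnegative for $\kappa^2 \ge 1$. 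Tracking equality through every mode forces all Fourier coefficients but $a_0 = \pm 1$ to vanish when $\kappa^2 > 1$ (giving $\uu = \pm \n$); at $\kappa^2 = 1$ the coefficient $c_0$ also remains free and one recovers $\uu_{\theta}$ once more. The main obstacle is precisely the cross term $-4 \int a' b$ at the first harmonic: a short minimization shows that without the constraint $a_1 = i b_1$, $A_1$ could be made as negative as $4\pi(\kappa^2 - 3)|b_1|^2$, so the null-average hypothesis is exactly what is needed to stabilize the lower bound $2\pi$.
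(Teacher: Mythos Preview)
Your argument is correct and genuinely different from the paper's. The paper proceeds by a \emph{symmetrization} trick: starting from a minimizer $\uu$, it freezes the integrand at a point $t_\ast$ minimizing $|\uu_\bot(t)|^2 + \kappa^2|\uu(t)\times\n(t)|^2$, builds the rotated competitor $\uu^\ast(t)=R(t)R^\top(t_\ast)\uu(t_\ast)$, and shows $\mathcal{F}(\uu^\ast)\le\mathcal{F}(\uu)$ via Poincar\'e--Wirtinger. Reading equality back through the chain forces $\partial_t u_3\equiv 0$ and equality in Poincar\'e, from which the $\uu_\theta$ family drops out; a final one-variable minimization over $\theta$ finishes. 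This is uniform in $\kappa$ and conceptually clean, but it rests on continuity of minimizers to make the choice of $t_\ast$ rigorous.

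Your route is a direct lower bound on $\mathcal{F}$ in the moving frame $(\n,\n^\perp,\e_3)$, splitting regimes: for $\kappa^2\le 1$ a short Poincar\'e--Wirtinger argument on $\uu_\bot$ suffices, while for $\kappa^2>1$ you pass to Fourier in the scalar coefficients $a,b,c$ and exploit the key observation that the null-average constraint is precisely the single relation $a_1=ib_1$, which makes the otherwise dangerous first-harmonic block $A_1$ nonnegative. This is more computational but entirely elementary (no regularity needed), and your final remark that without the constraint $A_1$ could drop to $4\pi(\kappa^2-3)|b_1|^2$ nicely anticipates why the unconstrained problem in Theorem~\ref{thm:PoincIneq} changes character at $\kappa^2=3$. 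One small point worth making explicit: at $k=2$ your AM--GM lower bound has vanishing $|a_2|^2$-coefficient, so to conclude $a_2=0$ from $A_2=0$ you first get $b_2=c_2=0$ from the surviving terms and then feed that back into the exact expression $A_2=16\pi|a_2|^2$.
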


\begin{remark}
  Note that at $\kappa^2 = 1$ a symmetry-breaking phenomenon appears. The
  minimizers suddenly pass from the in-plane configurations $\pm \n$
  for $\kappa^2 > 1$, to the purely axial configurations $\pm \tmmathbf{e}_3$
  for $\kappa^2 < 1$. Also, note that if $\tmmathbf{e} \in \Stwo^1 \times \{ 0
  \}$ is in-plane, then $\mathcal{F} (\tmmathbf{e}) = \pi \kappa^2$.
  Therefore, for $0 < \kappa^2 < 1$, the configurations $\pm \tmmathbf{e}_3$
  are never global minimizers outside of the restricted admissible class of
  {\emph{weakly}} axially symmetric configurations (i.e., maps $\uu
  \in H^1( \Stwo^1, \Stwo^2)$ such that $\langle
  \uu_{\bot} \rangle = 0$). A similar observation applies to the
  configurations $\pm \n$ when $0 < \kappa^2 < 2$ (because of
  $\mathcal{F} (\tmmathbf{e}) = \pi \kappa^2$); in this range of parameters
  $\pm \n$ cannot be global minimizers unless we restrict the
  minimization problem to the class of axially symmetric configurations.
\end{remark}

Before stating our main result on axially symmetric minimizers, we give the
following definition. We say that $\m \in H^1( \mathcal{C},
\Stwo^2)$ is {\emph{weakly}} axially symmetric (with respect to the
$z$-axis) if
\begin{equation}
  \langle \m_{\bot} (z, \cdot) \rangle_{\Stwo^1} \assign \frac{1}{2
  \pi} \int_{\Stwo^1} \m_{\bot} (z, \gamma) \mathd \gamma = 0 \quad
  \forall z \in I. \label{eq:condprojnull}
\end{equation}
\begin{remark}
  \label{eq:rmkaxsymm}It is important to stress that every axially symmetric
  configuration satisfies \eqref{eq:condprojnull}. Indeed, if $\m$
  is axially symmetric with respect to the $z$-axis then, in local
  coordinates, i.e., through the parameterization of $\Stwo^1$ given by
  $\gamma : t \in [- \pi, \pi] \mapsto (\cos t) \tmmathbf{e}_1 + (\sin t)
  \tmmathbf{e}_2$, we have that
  \begin{equation}
    \m (z, t) = R (t) \tilde{\m} (z) \quad \forall (z, t)
    \in I \times [- \pi, \pi]
  \end{equation}
  for some profile $\tilde{\m} \in H^1( I, \Stwo^2)$.
  Hence, $\langle \m (z, \cdot) \rangle_{\Stwo^1} =
  (\tilde{\m} (z) \cdot \tmmathbf{e}_3) \tmmathbf{e}_3$ for every $z
  \in I$, and this implies that $\langle \m_{\bot} (z, \cdot)
  \rangle_{\Stwo^1} = 0$ for every $z \in I$.
  Also, note that the class of {\emph{weakly}} axially symmetric
  configurations it is not directly related to the class of null-average
  configurations in $H^1( \mathcal{C}, \Stwo^2)$. Even if $\m$ is
  $z$-invariant, \eqref{eq:condprojnull} does not imply that $\m$ is null
  average, but only that its projection $\m_{\bot}$ is null average.
\end{remark}

\begin{theorem}[\tmname{axially symmetric energy minimizers}]
  \label{cor:axsymmmin}Let $\mathcal{C} \assign I \times \Stwo^1$, with $I =
  [- 1, 1]$. Assume that $\m$ is a {\opt}global{\cpt} minimizer of
  the micromagnetic energy functional {\emph{\eqref{eq:micromagenfunonC}}} in
  the class of weakly axially symmetric configurations. Then, $\m$
  is $z$-invariant and, more precisely, the following assertions hold:
  \begin{enumerate}
    \item[\emph{i.}] If $0 < \kappa^2 < 1$ then necessarily $\m \in \{ \pm
    \tmmathbf{e}_3 \}$.
    
    \item[\emph{ii.}] If $\kappa^2 > 1$ then necessarily $\m \in \{ \pm
    \n \}$.
    \item[\emph{iii.}] When $\kappa^2 = 1$, there are infinitely many axially symmetric
    minimizers; they are all $z$-invariant and given by $\m (z, t)
    =\uu_{\theta} (t)$ with $\theta \in [- \pi, \pi]$ and
    $\uu_{\theta}$ given by {\emph{\eqref{eq:utheta}}}.
  \end{enumerate}
  The values of the minimal energies are given by $4 \pi \kappa^2$ if $0 <
  \kappa^2 \leqslant 1$ and by $4 \pi$ if $\kappa^2 \geqslant 1$.
\end{theorem}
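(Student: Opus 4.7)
The plan is to run the symmetrization argument of Proposition~\ref{Prop:dimred} \emph{within} the weakly axially symmetric class, thereby reducing the problem to Proposition~\ref{prop:nullavperp}. The crucial observation is that weak axial symmetry, being a pointwise-in-$z$ linear constraint, is automatically preserved by the vertical freezing $z \mapsto z_\ast$ used in that proof; this is the main point one has to verify, and it is essentially the only obstacle.

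Let $\m$ be a minimizer of $\mathcal{E}$ in the weakly axially symmetric class, and let $\Phi$ be the slice energy defined in~\eqref{eq:defPhi}. By Fubini, $\Phi \in L^1(I)$, the slice $\m(z,\cdot)$ lies in $H^1(\Stwo^1,\Stwo^2)$ for a.e.\ $z\in I$, and by~\eqref{eq:condprojnull} the trace $\m(z_\ast,\cdot)$ has null in-plane average for a.e.\ $z_\ast \in I$. Picking $z_\ast$ at which all these properties hold and $\Phi(z_\ast) \leqslant \tfrac{1}{2}\int_{-1}^1 \Phi(z)\,\mathd z$, I would set $\m_\ast(z, t) \assign \m(z_\ast, t)$. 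Then $\m_\ast$ is $z$-invariant and inherits weak axial symmetry from $\langle \m_\bot(z_\ast,\cdot)\rangle_{\Stwo^1}=0$, so it is an admissible competitor. Repeating~\eqref{eq:chaininequalities} verbatim yields
\begin{equation*}
\mathcal{E}(\m_\ast) \;=\; 2\,\Phi(z_\ast) \;\leqslant\; \int_{-1}^1 \Phi(z)\,\mathd z \;\leqslant\; \mathcal{E}(\m),
\end{equation*}
and minimality of $\m$ inside the restricted class forces $\partial_z \m \equiv 0$ on $\mathcal{C}$.

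Once $z$-invariance is established, I write $\m(z, t) = \uu(t)$ for some $\uu \in H^1(\Stwo^1, \Stwo^2)$ with $\langle \uu_\bot\rangle = 0$. Since $|I|=2$, the energy reduces to $\mathcal{E}(\m) = 2\,\mathcal{F}(\uu)$, with $\mathcal{F}$ as in~\eqref{eq:redenmain}. Minimizing $\mathcal{E}$ in the weakly axially symmetric class is thus equivalent to minimizing $\mathcal{F}$ over null-average maps, so Proposition~\ref{prop:nullavperp} delivers the three-case characterization verbatim: $\m \in \{\pm \e_3\}$ when $0 < \kappa^2 < 1$, the one-parameter family $\uu_\theta$ when $\kappa^2=1$, and $\m \in \{\pm \n\}$ when $\kappa^2 > 1$. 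Doubling the minimal values $2\pi\kappa^2$ and $2\pi$ provided by that proposition yields the claimed $4\pi\kappa^2$ for $0<\kappa^2\leqslant 1$ and $4\pi$ for $\kappa^2\geqslant 1$.
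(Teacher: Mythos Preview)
Your proof is correct and follows the same overall strategy as the paper: establish $z$-invariance of minimizers within the weakly axially symmetric class, then invoke Proposition~\ref{prop:nullavperp} on the resulting profile. The only difference is in how the $z$-invariance step is justified. The paper either appeals to interior regularity of minimizers (so that $\Phi$ is continuous and $\operatorname{argmin}\Phi\neq\emptyset$) or, in its alternative argument, passes through a smooth approximation $\m_\varepsilon\to\m_0$ and extracts a weak limit of the frozen slices. You bypass both devices with the elementary observation that $\Phi\in L^1(I)$ and Fubini already guarantee a full-measure set of $z$ on which the slice $\m(z,\cdot)$ lies in $H^1(\Stwo^1,\Stwo^2)$ with null in-plane average, so one may simply pick $z_\ast$ in that set with $\Phi(z_\ast)$ at most its average. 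This is a clean simplification of the paper's argument; the rest of your reduction to Proposition~\ref{prop:nullavperp} and the energy bookkeeping are identical.
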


\begin{remark}
  Note that, due to Remark \ref{eq:rmkaxsymm} and the fact that $\pm \n$ and
  $\pm \e_3$ are axially symmetric {\opt}with respect to the $z$-axis{\cpt},
  the conclusions of Theorem~\ref{cor:axsymmmin} still hold in the class of
  axially symmetric minimizers.
\end{remark}

\begin{remark}
  We stress that Theorem~\ref{cor:axsymmmin} does not look for axially
  symmetric minimizers in the class of minimizers of $\mathcal{E}$. In other
  words, axially symmetric minimizers do not need to be global minimizers. In
  fact, Theorem~\ref{cor:axsymmmin} characterizes the minimizers of
  $\mathcal{E}$ in the class of configurations that satisfy condition
  \eqref{eq:condprojnull} and shows that, in this class, the minimizers are
  necessarily $z$-invariant and axially symmetric.
\end{remark}

We first give the proof of Proposition~\ref{prop:nullavperp}, then we prove
Theorem~\ref{cor:axsymmmin} as a consequence of Proposition~\ref{Prop:dimred},
Proposition~\ref{prop:nullavperp}, and Remark~\ref{eq:rmkaxsymm}.

\begin{proof}{Proof of Proposition~\ref{prop:nullavperp}.}
  For every $t \in [- \pi, \pi]$ we denote by $R (t)$ the rotation through an
  angle $t$ about the $z$-axis which appears
  ({\tmabbr{cf.}}~\eqref{eq:utheta}). Clearly, $\n (t) = R (t)
  \tmmathbf{e}_1$. Next, let $\uu \in H^1( \Stwo^1, \Stwo^2
 )$ be a minimizer of \eqref{eq:1denS1} and let us choose $t_{\ast} \in
  [- \pi, \pi]$ such that
  \begin{equation}
    t_{\ast} \in \underset{t \in [- \pi, \pi]}{\arg \: \min}  \left( |
    \uu_{\bot} (t) |^2  + \kappa^2  | \uu (t) \times
    \n (t) |^2 \right)
  \end{equation}
  with $\uu_{\bot} (t) \assign (u_1 (t), u_2 (t), 0)$. Here, we are identifying $\uu$ with its H{\"o}lder continuous representative
so that $t_{\ast}$ is well-defined. Define the new configuration
  \begin{equation}
    \uu^{\ast} (t) \assign R (t) R^{\T} (t_{\ast}) \uu
    (t_{\ast}) .
  \end{equation}
  We then have $| \uu^{\ast} (t) | = 1$ and $\langle
  \uu^{\ast}_{\bot} \rangle = 0$ because $\left\langle R (t) R^{\T}
  (t_{\ast}) \uu (t_{\ast}) \right\rangle = (\uu (t_{\ast})
  \cdot \tmmathbf{e}_3) \tmmathbf{e}_3$. Moreover,
  \begin{equation}
    | \partial_t \uu^{\ast} (t) |^2 \; = \; \left| R^{\T}
    \partial_t R (t) R^{\T} (t_{\ast}) \uu (t_{\ast}) \right|^2 \;
    = \; \left| \tmmathbf{e}_3 \times \left( R^{\T} (t_{\ast}) \uu
    (t_{\ast}) \right) \right|^2 \; = \; | \uu_{\bot} (t_{\ast})
    |^2
  \end{equation}
  and
  \begin{equation}
    | \uu^{\ast} (t) \times \n (t) |^2 \; = \; \left|
    R^{\T} (t_{\ast}) \uu (t_{\ast}) \times \tmmathbf{e}_1 \right|^2
    \; = \; \; | \uu (t_{\ast}) \times \n (t_{\ast}) |^2
  \end{equation}
  It follows that
  \begin{align}
    \mathcal{F} (\uu^{\ast}) =\; & \int_{- \pi}^{\pi} | \partial_t
    \uu^{\ast} (t) |^2 \mathd t + \kappa^2 \int_{- \pi}^{\pi} |
    \uu^{\ast} (t) \times \n (t) |^2 \; \mathd t \\
    =\; & \int_{- \pi}^{\pi} | \uu_{\bot} (t_{\ast}) |^2 +
    \kappa^2 | \uu (t_{\ast}) \times \n (t_{\ast}) |^2  \;
    \mathd t \\
     \leqslant \; & \int_{- \pi}^{\pi} | \uu_{\bot} (t) |^2 + \kappa^2
    | \uu (t) \times \n (t) |^2  \; \mathd t. 
    \label{eq:tempPW1}
  \end{align}
  After that, the sharp Poincar{\'e}-Wirtinger inequality on $\Stwo^1$
  ({\tmabbr{cf.}}~Proposition~\ref{prop:SPI}) assures that for every
  $\uu_{\bot} \in H^1( \Stwo^1, \RR^2)$ such that
  $\langle \uu_{\bot} \rangle =\tmmathbf{0}$ one has
  \begin{equation}
    \int_{- \pi}^{\pi} | \uu_{\bot} (t) |^2 \; \mathd t \leqslant
    \int_{- \pi}^{\pi} | \partial_t \uu_{\bot} (t) |^2 \mathd t.
    \label{eq:tempPW2}
  \end{equation}
  Combining \eqref{eq:tempPW1} and \eqref{eq:tempPW2} we conclude that
  \begin{equation}
    \mathcal{F} (\uu^{\ast}) \leqslant \int_{- \pi}^{\pi} |
    \partial_t \uu_{\bot} (t) |^2 + \kappa^2 | \uu (t)
    \times \n (t) |^2  \; \mathd t \; \leqslant \; \mathcal{F}
    (\uu) .
  \end{equation}
  Thus $\uu^{\ast}$ and $\uu$ are both minimizers. This
  implies that
  \begin{equation}
    \mathcal{F} (\uu) = \int_{- \pi}^{\pi} | \partial_t
    \uu_{\bot} (t) |^2 + \kappa^2 | \uu (t) \times
    \n (t) |^2  \; \mathd t = \mathcal{F} (\uu^{\ast}) .
  \end{equation}
  It follows that whenever $\langle \uu_{\bot} \rangle
  =\tmmathbf{0}$, then necessarily $\partial_t (\uu (t) \cdot
  \tmmathbf{e}_3) = 0$ on $\Stwo^1$. On the other hand, the equality
  $\mathcal{F} (\uu^{\ast}) =\mathcal{F} (\uu)$ also entails
  that the equality sign is reached in the sharp Poincar{\'e}-Wirtinger
  inequality \eqref{eq:SPI}, i.e., that
  \begin{equation}
    \int_{- \pi}^{\pi} | \uu_{\bot} (t) |^2 \mathd t = \int_{-
    \pi}^{\pi} | \partial_t \uu_{\bot} (t) |^2 \mathd t
  \end{equation}
  whenever $\uu$ is a minimizer with $\langle \uu_{\bot}
  \rangle =\tmmathbf{0}$. However, the equality sign in the
  Poincar{\'e}-Wirtinger inequality is achieved if, and only if,
  $\uu_{\bot} = (\cos t) \tmmathbf{a}_1 + (\sin t) \tmmathbf{a}_2$
  for some $\tmmathbf{a}_1, \tmmathbf{a}_2 \in \RR^2 \times \{ 0 \}$.
  Combining this observation with the conditions $\partial_t (\uu (t)
  \cdot \tmmathbf{e}_3) = 0$ and $| \uu | = 1$ we conclude that if
  $\langle \uu_{\bot} \rangle =\tmmathbf{0}$ then necessarily
  \begin{equation}
    \uu (t) =\uu_{\theta} (t) \assign \left(\begin{array}{c}
      \sin \theta \cos t\\
      \sin \theta \sin t\\
      \cos \theta
    \end{array}\right)
  \end{equation}
  for some $\theta \in \RR$. Finally, we note that with $\uu (t)$
  given by the previous expression, we have
  \[ | \partial_t \uu_{\theta} (t) |^2 = \sin^2 \theta, \qquad |
     \uu_{\theta} (t) \times \n (t) |^2 = 1 - \sin^2
     \theta . \]
  Therefore
  \begin{equation}
    \mathcal{F} (\uu_{\theta} (t)) = \int_{- \pi}^{\pi} \sin^2 \theta
    + \kappa^2 (1 - \sin^2 \theta) \mathd t \; = \; 2 \pi [\kappa^2 +
    \sin^2 \theta (1 - \kappa^2)] . \label{eq:exprenF}
  \end{equation}
  Minimizing \eqref{eq:exprenF} with respect to $\theta \in [- \pi, \pi]$ we
  get that $\theta = \pm \pi$ when $0 < \kappa^2 < 1$ and, in this case,
  \begin{equation}
    \mathcal{F} (\uu_{\theta} (t)) =\mathcal{F} (\pm \tmmathbf{e}_3)
    = 2 \pi \kappa^2 .
  \end{equation}
  Also, we get that the angle $\theta$ can be arbitrarily chosen when
  $\kappa^2 = 1$, and in this case,
  \begin{equation}
    \mathcal{F} (\uu_{\theta} (t)) = 2 \pi \quad \forall \theta
    \in [- \pi, \pi] .
  \end{equation}
  Finally, we obtain that $\theta = \pm \pi / 2$ when $\kappa^2 > 1$ and, in
  this case,
  \begin{equation}
    \mathcal{F} (\uu_{\theta} (t)) = 2 \pi .
  \end{equation}
  This gives \eqref{eq:charnullavm} and completes the proof.
\end{proof}

\begin{proof}{Proof of Theorem \ref{cor:axsymmmin}}
  The proof is a consequence of Proposition~\ref{Prop:dimred},
Proposition~\ref{prop:nullavperp}, and Remark~\ref{eq:rmkaxsymm}. The only
point is to realize that the proof of Proposition~\ref{Prop:dimred} is not
affected by the introduction of the additional weakly axially symmetric
constraint \eqref{eq:condprojnull}. Indeed, the only place where the
constraint \eqref{eq:condprojnull} impacts the proof of
Proposition~\ref{Prop:dimred} is in the regularity of minimizers which we
based on {\cite[Theorem~4.2]{Moser2005}}, and one can show that the linearity
of the constraint \eqref{eq:condprojnull} makes the arguments in
{\cite[Theorem~4.2]{Moser2005}} still work. However, for the reader's
convenience, we give here an alternative proof of
Proposition~\ref{Prop:dimred} that does not rely on the theory developed in
{\cite[Chapter~4]{Moser2005}} and immediately adapts to the presence of the
additional constraint \eqref{eq:condprojnull}. This will complete the proof of  Theorem~\ref{cor:axsymmmin}.

\medskip\noindent{\emph{Proof of Proposition~{\emph{\ref{Prop:dimred}}} under the additional
constraint {\emph{\eqref{eq:condprojnull}}}}}. We assume that $\Stwo^1 = \zeta ([0, 2 \pi])$ is parameterized by
arc-length. Let $\m_0$ be a minimizer of
\begin{equation}
  \mathcal{E} \left( \m \right) = \int_{- 1}^1 \int_0^{2 \pi} \left|
  \nabla \m (z, t) \right|^2 + \kappa^2 | \m (z, t) \times \n (t) |^2 \mathd z
  \mathd t, \nonumber
\end{equation}
and let $\m_{\varepsilon} \in C^{\infty} \left( I \times \Gamma, \Stwo^2
\right)$ be such that $\m_{\varepsilon} \rightarrow \m_0$ in $H^1 \left( I
\times \Gamma, \Stwo^2 \right)$ (see {\cite[p.267]{Schoen_1983}}). \ For every
$\varepsilon > 0$ we consider the function (which now depends on
$\varepsilon$)
\begin{equation}
  \Phi_{\varepsilon} : z \in I \assign [- 1, 1] \mapsto \int_0^{2 \pi} \left|
  \partial_t \m_{\varepsilon} (z, t) \right|^2 \mathd t + \kappa^2  |
  \m_{\varepsilon} (z, t) \times \n (t) |^2 \mathd t. \label{eq:defPhinew}
\end{equation}
In terms of $\Phi_{\varepsilon}$ the energy reads as
\begin{equation}
  \mathcal{E} (\m_{\varepsilon}) = \int_{- 1}^1 \left( \Phi_{\varepsilon} (z)
  + \int_0^{2 \pi} | \partial_z \m_{\varepsilon} (z, t) |^2 \mathd t \right)
  \mathd z. \label{eq:toregremarkr2new}
\end{equation}
For every $\varepsilon > 0$, $\Phi_{\varepsilon}$ is continuous on $[- 1, 1]$
and $\tmop{argmin}_{z \in [- 1, 1]} \Phi_{\varepsilon} (z) \neq \emptyset$. We
arbitrarily choose a point $z_{\varepsilon} \in \tmop{argmin}_{z \in [- 1, 1]}
\Phi_{\varepsilon} (z)$ and, with that, we define the $z$-invariant
configuration $\tmmathbf{u}_{\varepsilon} (t) \assign \m_{\varepsilon}
(z_{\varepsilon}, t)$. We then have
\begin{align}
  \mathcal{E} (\tmmathbf{u}_{\varepsilon}) = \hspace{0.27em} & \int_{- 1}^1
  \int_0^{2 \pi} \left| \partial_t \m_{\varepsilon} (z_{\varepsilon}, t)
  \right|^2 \mathd t + \kappa^2  | \m_{\varepsilon} (z_{\varepsilon}, t)
  \times \n (t) |^2 \mathd t \mathd z \nonumber\\
  = \hspace{0.27em} & \int_{- 1}^1 \Phi_{\varepsilon} (z_{\varepsilon}) \mathd
  t \leqslant \int_{- 1}^1 \Phi_{\varepsilon} (z) \mathd z \leqslant \int_{-
  1}^1 \left( \Phi_{\varepsilon} (z) + \int_0^{2 \pi} | \partial_z
  \m_{\varepsilon} (z, t) |^2 \mathd t \right) \mathd z \nonumber\\
  = \hspace{0.27em} & \mathcal{E} (\m_{\varepsilon}) . 
  \label{eq:chaininequalitiesr2new}
\end{align}
Since $\mathcal{E} (\m_{\varepsilon})$ is bounded, we have that
$\tmmathbf{u}_{\varepsilon}$ is bounded in $H^1( I_{2 \pi},
\RR^3 )$, $I_{2\pi}:=(0,2\pi)$, and, therefore, there exists $\tmmathbf{u}_0 \in H^1
( I_{2 \pi}, \RR^3 )$ such that $\tmmathbf{u}_{\varepsilon}
\rightharpoonup \tmmathbf{u}_0$ weakly in $H^1 (I_{2 \pi},
\RR^3 )$ and, up to a subsequence, $| \tmmathbf{u}_0 | = 1$ (this is
because of $\tmmathbf{u}_{\varepsilon} (t) \assign \m_{\varepsilon}
(z_{\varepsilon}, t)$ with $\left| \m_{\varepsilon} \right| = 1$). Passing to
the limit, we then have that
\[ \mathcal{E} (\tmmathbf{u}_0) \leqslant \liminf_{\varepsilon \rightarrow 0}
   \mathcal{E} (\tmmathbf{u}_{\varepsilon}) \leqslant \int_{- 1}^1 \Phi_0 (z)
   \mathd z \leqslant \int_{- 1}^1 \left( \Phi_0 (z) + \int_0^{2 \pi} |
   \partial_z \m_0 (z, t) |^2 \mathd t \right) \mathd z =\mathcal{E} (\m_0) .
\]
Hence, by the minimality of $\m_0$ we have that $\mathcal{E} (\tmmathbf{u}_0)
=\mathcal{E} (\m_0)$ and, therefore,
\begin{equation}
  \int_{- 1}^1 \int_0^{2 \pi} | \partial_z \m_0 (z, t) |^2 \mathd t \mathd z =
  0,
\end{equation}
from which we conclude that $\m_0$ is $z$-invariant. This completes the proof
when constraint \eqref{eq:condprojnull} is not present. Note that although we assumed that
$\Gamma = \Stwo^1$, the proof works for general smooth Jordan curves $\Gamma$ with minor notational modifications.

The same proof now works even if we assume the weakly axially symmetric
constraint \eqref{eq:condprojnull}. Indeed, if $\m_0^{\bot}$ is a weakly
axially symmetric minimizer, then also $\tmmathbf{u}_0^{\bot}$ is weakly
axially symmetric. To see this, we observe that
\begin{align}
  \left| \int_0^{2 \pi} \tmmathbf{u}_{\varepsilon}^{\bot} (t) -
  \m_{\varepsilon}^{\bot} (z, t) \mathd t \right| & = \left| \int_0^{2 \pi}
  \m_{\varepsilon}^{\bot} (z_{\varepsilon}, t) - \m_{\varepsilon}^{\bot} (z,
  t) \mathd t \right| \nonumber\\
  & = \left| \int_0^{2 \pi} \int_z^{z_{\varepsilon}} \partial_z
  \m_{\varepsilon}^{\bot} (s, t) \mathd s \mathd t - \int_z^{z_{\varepsilon}}
  \partial_z \left( \int_0^{2 \pi} \m_0^{\bot} (s, t) \mathd s \right) \mathd
  t \right| \nonumber\\
  & =  \left| \int_0^{2 \pi} \int_z^{z_{\varepsilon}} \partial_z
  \m_{\varepsilon}^{\bot} (s, t) \mathd s \mathd t - \int_0^{2 \pi}
  \int_z^{z_{\varepsilon}} \partial_z \m_0^{\bot} (s, t) \mathd s \mathd t
  \right| \nonumber\\
  & \leqslant  \int_0^{2 \pi} \int_{- 1}^1 \left| \partial_z \left(
  \m_{\varepsilon}^{\bot} - \m_0^{\bot} \right) \right| \mathd s \mathd t, 
\end{align}
and, therefore, passing to the limit for $\varepsilon \rightarrow 0$, we get
that
\begin{equation}
  \left| \int_0^{2 \pi} \tmmathbf{u}_0^{\bot} (t) - \m_0^{\bot} (z, t) \mathd
  t \right| = 0,
\end{equation}
i.e., $\langle \tmmathbf{u}_0^{\bot} \rangle_{\Stwo^1} = \left\langle
\m_0^{\bot} (z, \cdot) \right\rangle_{\Stwo^1}=\mathbf{0}$. This completes the proof.
\end{proof}

\section{Global minimizers. A sharp Poincar{\'e}-type inequality on the
cylinder}\label{se:SPI}

{\noindent}An exact characterization of the minimizers of the energy
functional \eqref{eq:micromagenfunonC} is a nontrivial task. Qualitative
aspects of the energy landscape have been investigated in \cite{streubel2016magnetism} through numerical simulations. However, sometimes it is enough to obtain a meaningful lower bound on the energy to gain information on the ground states. For that, we relax the constraint from $\m$ being $\Stwo^2$-valued to the following energy constraint:
\begin{equation}
  \frac{1}{4 \pi} \int_{\mathcal{C}} | \m |^2 = 1,
  \label{eq:relaxconstr}
\end{equation}
with $\mathcal{C} \assign I \times \Stwo^1$ and $I \assign [- 1, 1]$. From the
physical point of view, this type of relaxation corresponds to a passage from
classical physics to a probabilistic quantum mechanics perspective, and it has
been proved to be useful in obtaining nontrivial lower bounds of the ground
state micromagnetic energy (see, e.g.,
{\cite{BrownA1968,di2012generalization,Di_Fratta_2019}}). From the
mathematical perspective, replacing the pointwise constraint $| \m |
= 1$ {\tmabbr{a.e.}} in $\mathcal{C}$ with \eqref{eq:relaxconstr} frames the
minimization problem in the context of Poincar{\'e}-type inequalities, where
sometimes the relaxed problem can be solved exactly, and the dependence of the
minimizers on the geometrical and physical properties of the model made
explicit. This relaxation can help to obtain sufficient conditions for
minimizers to have specific geometric structures (see, e.g.,
{\cite{BrownA1968,di2012generalization,Di_Fratta_2019}}). We note that the
pointwise constraint $| \m | = 1$ {\tmabbr{a.e.}} in $\mathcal{C}$
is equivalent to the following two energy constraints in terms of the $L^2$
and $L^4$ norms:
\begin{equation}
  \frac{1}{4 \pi} \int_{\mathcal{C}} | \m |^2 = 1, \qquad \frac{1}{4
  \pi} \int_{\mathcal{C}} | \m |^4 = 1.
\end{equation}
Indeed, by Cauchy--Schwarz inequality $4 \pi = (| \m |^2, 1)_{L^2
(\mathcal{C})} \leqslant \| | \m |^2 \|_{L^2 (\mathcal{C})}  \| 1
\|_{L^2 (\mathcal{C})} = 4 \pi$, which assures that the equality sign in
the previous estimate is reached only when $| \m |^2$ is constant
and, therefore, necessarily equal to $1$. It follows that the relaxed problem
can also be interpreted as the one obtained by forgetting about the $L^4$
constraint.

Our results include the precise characterization of the minimal
value and global minimizers of the energy functional $\mathcal{E}$ defined in
\eqref{eq:micromagenfunonC} on the space of $H^1( \mathcal{C}, \RR^3
)$ vector fields satisfying the relaxed constraint
\eqref{eq:relaxconstr}. Thanks to Proposition \ref{Prop:dimred}, we can focus
on the analysis of the minimizers in $H^1( \Stwo^1, \RR^3)$ of
the {\emph{normalized}} energy functional
\begin{equation}
  \GG (\uu) \assign \frac{1}{2 \pi} \int_{\Stwo^1} \left|
  \grad_{\gamma} \uu \right|^2 \mathd \gamma \; + \;
  \frac{\kappa^2}{2 \pi} \int_{\Stwo^1} | \uu \times \n |^2
  \mathd \gamma, \label{eq:renoen}
\end{equation}
subject to the $L^2$-constraint
\begin{equation}
  \frac{1}{2 \pi} \int_{\Stwo^1} | \uu |^2 \mathd \gamma = 1.
  \label{eq:contru0}
\end{equation} 
Clearly, every minimizer of \eqref{eq:renoen} in $H^1( \Stwo^1, \Stwo^2
)$ satisfies the constraint \eqref{eq:contru0} and provides an upper
bound to the minimal energy associated with the problem
\eqref{eq:renoen}-\eqref{eq:contru0}. Thus, problem
\eqref{eq:renoen}-\eqref{eq:contru0} is a relaxed version of the minimization
problem for $\GG$ in $H^1( \Stwo^1, \Stwo^2)$. Although the
expression of the energy functional \eqref{eq:renoen} is, up to the constant
factor $\frac{1}{2 \pi}$, the same as of $\mathcal{F}$ in
\eqref{eq:redenmain}, we denoted it by $\GG$ to stress that it is part of the
relaxed minimization problem.

The existence of minimizers of the problem
\eqref{eq:renoen}-\eqref{eq:contru0} quickly follows by direct methods in the
Calculus of Variations. However, uniqueness is out of the question due to the
energy's invariance under the orthogonal group and reflections. Indeed, for
every $\kappa^2 > 0$, at least two minimizers always exist because if
$\uu$ is a minimizer of $\GG$, also $-\uu$ minimizes $\GG$.
We only focus on the nontrivial case $\kappa^2 \neq 0$; otherwise, constant
configurations $\tmmathbf{\sigma} \in \Stwo^2$ are the only minimizers.

In what follows, we denote by $\n$ the outward normal vector field
to $\Stwo^1$ and by $\tmmathbf{\tau} \assign \grad_{\gamma} \n$ the
tangential one. When we refer to the local coordinates representation of a
configuration $\uu_{\bot} \in H^1( \Stwo^1, \RR^3)$, it
is always meant the curve $\uu_{\bot} \circ \gamma$, with $\gamma : t
\in [- \pi, \pi] \mapsto (\cos t) e_1 + (\sin t) e_2$, and $t \in [- \pi,
\pi]$. Thus, e.g., in local coordinates, we have that $\tmmathbf{\tau} (t) =
(- \sin t, \cos t)$ and $\n (t) = (\cos t, \sin t)$.

Our main result includes the precise characterization of the minimal value and
global minimizers of the relaxed problem \eqref{eq:renoen}-\eqref{eq:contru0}
on the space of $H^1( \Stwo^1, \RR^3)$. In fact, we establish the
following sharp Poincar{\'e}-type inequality in $H^1( \Stwo^1, \RR^3)$.
\begin{figure}[t]
{\includegraphics[width=\textwidth]{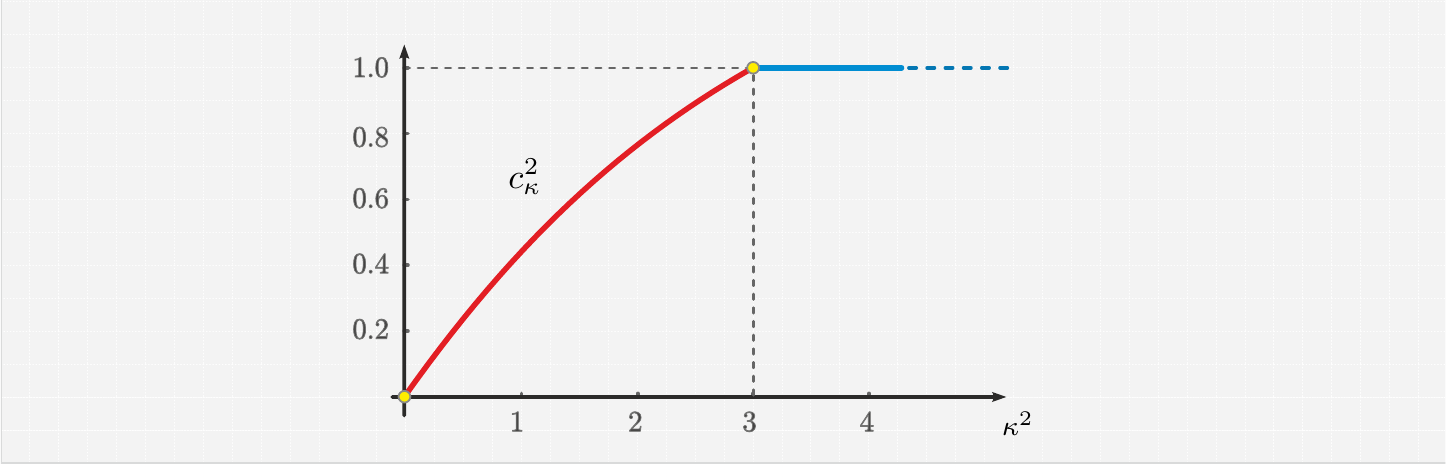}}
  \caption{\label{Fig:optimalPC}The graph of the optimal Poincar{\'e} constant
  $c^2_{\kappa}$ as a function of the parameter $\kappa^2$. The optimal
  constant $c^2_{\kappa}$ increases until the saturation value $c^2_{\kappa} =
  1$ is reached at $\kappa^2 = 3$.
  }
\end{figure}

\begin{theorem}[\tmname{sharp Poincar{\'e}-type inequality in} $H^1( \Stwo^1,\RR^3)$]~\label{thm:PoincIneq}
  For every $\kappa^2 > 0$ the following sharp Poincar{\'e}-type inequality
  holds,
  \begin{equation}
    \int_{\Stwo^1} \left| \grad_{\gamma} \uu \right|^2 \mathd \gamma + \kappa^2 
    \int_{\Stwo^1} | \uu \times \n |^2 \mathd \gamma \hspace{0.27em} \geqslant
    \hspace{0.27em} c^2_{\kappa}  \int_{\Stwo^1} | \uu |^2 \mathd \gamma \quad
    \forall \uu \in H^1 (\Stwo^1, \RR^3), \label{eq:PoinconS1R3}
  \end{equation}
  where the best Poincar{\'e} constant $c^2_{\kappa}$ is the continuous function
  of $\kappa$ given by
  \begin{equation}
    c_{\kappa}^2 \assign \left\{ \begin{array}{lll}
      1 & \text{if } & \kappa^2 \geqslant 3,\\
      \frac{1}{2} (\kappa^2 - \omega_{\kappa}^2 + 4) & \text{if } & 0 < \kappa^2
      \leqslant 3,
    \end{array} \right. \label{eq:bestconsts}
  \end{equation}
  with $\omega_{\kappa}^2 \assign \sqrt{\kappa^4 + 16}$. Moreover, the equality
  sign in the Poincar{\'e} inequality {\emph{\eqref{eq:PoinconS1R3}}} is reached
  if, and only if, $\uu \in H^1 (\Stwo^1, \RR^3)$ has the following expressions:
  \begin{enumerate}
    \item[{\emph{i.}}] If $\kappa^2 > 3$, the equality sign in
    {\emph{\eqref{eq:PoinconS1R3}}} is reached only by the normal vector fields
    $\pm \n$.
    
    \item[{\emph{ii.}}] If $\kappa^2 = 3$, the equality is reached if, and only
    if, $\uu$ is an element of the family represented in local coordinates by
    \begin{equation}
      \uu_{\bot} (t) = \sqrt{2} \rho_1 \cos (\theta + t) \tmmathbf{\tau} (t) +
      \left[ \pm \sqrt{1 - 5 \rho_1^2} + 2 \sqrt{2} \rho_1 \sin (\theta + t)
      \right] \n (t) \label{eq:newminsk23thm}
    \end{equation}
    for arbitrary $\theta \in [- \pi, \pi]$ and $0 \leqslant \rho_1 \leqslant 1
    / \sqrt{5}$. In particular ($\rho_1 = 0$), the normal vector fields $\pm \n$
    persist as minimizers for $\kappa^2 = 3$.
    
    \item[{\emph{iii.}}] If $0 < \kappa^2 < 3$, the equality sign in
    {\emph{\eqref{eq:PoinconS1R3}}} is reached by any element of the family
    represented in local coordinates by
    \begin{equation}
      \uu_{\bot} (t) = \sqrt{2} (\sin \phi_{\kappa}) \cos (\theta + t)
      \tmmathbf{\tau} (t) + \sqrt{2} (\cos \phi_{\kappa}) \sin (\theta + t) \n
      (t), \label{eq:newminsk24thm}
    \end{equation}
    with $\theta \in [- \pi, \pi]$ arbitrary, and $\phi_{\kappa} \in [0, \pi /
    2]$ given by
    \begin{equation}
      \phi_{\kappa} = \frac{1}{2} \arctan (4 / \kappa^2) .
      \label{eq:eqsforphikthm}
    \end{equation}
    Moreover, there are no $\Stwo^2$-valued configurations for which the
    equality sign is achieved in {\emph{\eqref{eq:PoinconS1R3}}}.
  \end{enumerate}
  The normal fields $\pm \n$ are universal configurations as their energy does
  not depend on $\kappa^2$.
\end{theorem}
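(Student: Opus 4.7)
The natural approach is via a Fourier expansion adapted to the geometry of $\Stwo^1$. I would first decompose $\uu \in H^1(\Stwo^1, \RR^3)$ in the rotating orthonormal frame $\{\tmmathbf{\tau}(t), \n(t), \e_3\}$ along $\Stwo^1$, writing $\uu(t) = u_\tau(t) \tmmathbf{\tau}(t) + u_n(t) \n(t) + u_3(t) \e_3$. Since $\partial_t \tmmathbf{\tau} = -\n$ and $\partial_t \n = \tmmathbf{\tau}$, the tangential derivative picks up connection terms, giving
\begin{equation}
|\partial_t \uu|^2 \;=\; (\partial_t u_\tau + u_n)^2 + (\partial_t u_n - u_\tau)^2 + (\partial_t u_3)^2,
\end{equation}
while orthogonality yields $|\uu \times \n|^2 = u_\tau^2 + u_3^2$. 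Substituting into \eqref{eq:1denS1} produces a quadratic functional whose only non-separable piece is the cross term $2(u_n \partial_t u_\tau - u_\tau \partial_t u_n)$ arising from the rotation of the frame.

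Next, I would expand each scalar component in a real Fourier series on $[-\pi,\pi]$ and apply Parseval's identity. The key point is that every quadratic term couples only Fourier modes of the same frequency $k$: using integration by parts the cross term becomes $4\pi \sum_{k\ge 1} k(A'_k B_k - A_k B'_k)$, where $A_k, A'_k, B_k, B'_k, C_k, C'_k$ denote the real Fourier coefficients of $u_\tau, u_n, u_3$. Consequently, the ratio between energy and $L^2$-norm decouples into independent finite-dimensional eigenvalue problems indexed by $k$, with the axial coefficients $(C_k, C'_k)$ uncoupled from the in-plane ones.

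For each $k \geq 1$, the in-plane block is a $4 \times 4$ symmetric matrix that, after pairing $(A_k, B'_k)$ with $(A'_k, B_k)$, splits into two $2\times 2$ blocks with identical spectrum; a direct computation of trace and determinant gives
\begin{equation}
\lambda_k^- \;=\; \tfrac{1}{2}\bigl[2k^2 + 2 + \kappa^2 - \sqrt{16 k^2 + \kappa^4}\bigr].
\end{equation}
The axial direction contributes $k^2 + \kappa^2$, and the $k=0$ block contributes $\min(1, \kappa^2)$, the value $1$ being attained in the $u_n$-direction and $\kappa^2$ in the $u_3$-direction. An elementary squaring argument shows $\lambda_k^- \geq 1$ for all $k \geq 2$ and all $\kappa^2 \geq 0$, while $\lambda_1^- \leq 1$ precisely when $\kappa^2 \leq 3$. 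Taking the infimum over all modes yields the formula \eqref{eq:bestconsts} for $c_\kappa^2$.

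The equality cases are then read off from the eigenvectors of the minimizing block. For $\kappa^2 > 3$ only the $k=0$ mode in $u_n$ saturates, giving $\uu = \pm \n$ after normalization. For $0 < \kappa^2 < 3$ only the $k=1$ eigenvector is minimizing; solving the eigenvector equation $B'_1 = \tfrac{\kappa^2+\omega_\kappa^2}{4} A_1$ and $B_1 = -\tfrac{\kappa^2+\omega_\kappa^2}{4} A'_1$, and then recasting the ratio via the double-angle identity, yields the family \eqref{eq:newminsk24thm} with $\phi_\kappa$ satisfying \eqref{eq:eqsforphikthm}; the norm $|\uu(t)|^2 = 2\sin^2\phi_\kappa \cos^2(\theta+t) + 2\cos^2\phi_\kappa \sin^2(\theta+t)$ is constant only when $\phi_\kappa = \pi/4$, i.e.\ $\kappa = 0$, ruling out $\Stwo^2$-valued minimizers. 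The transitional case $\kappa^2 = 3$ is the delicate one and will be the main obstacle in the bookkeeping: both the $k=0$ and the $k=1$ modes share eigenvalue $1$, so the set of minimizers is the full linear span of the corresponding eigenspaces subject to the $L^2$-normalization; parameterizing this span and carefully enforcing $\int |\uu|^2 = 2\pi$ (which gives the constraint $B_0 = \pm\sqrt{1-5\rho_1^2}$) produces the two-parameter family \eqref{eq:newminsk23thm}.
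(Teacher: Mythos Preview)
Your approach is correct and essentially coincides with the paper's: both work in the rotating frame $(\tmmathbf{\tau},\n,\e_3)$, expand in Fourier series, observe that the quadratic form block-diagonalizes by frequency, and identify the minimum eigenvalue among the blocks. The paper's computation of the $2\times 2$ in-plane blocks, their eigenvalues $\lambda_k^-$, and the case analysis $\kappa^2\gtrless 3$ are identical to yours.

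One minor difference worth noting: the paper first invokes a separate lemma (Lemma~\ref{lemma:e3eq0}) to eliminate the $\e_3$-component of any minimizer, via the Euler--Lagrange equation $\partial_{tt}u_3=(\kappa^2-\lambda)u_3$ and the observation that this has no nonzero periodic solution when $\lambda<\kappa^2$. You bypass this by keeping $u_3$ in the Fourier analysis and noting directly that its eigenvalues $k^2+\kappa^2$ (and $\kappa^2$ at $k=0$) strictly exceed $c_\kappa^2$ for all $\kappa^2>0$; this is slightly more streamlined and avoids the detour through the Euler--Lagrange system. Conversely, the paper derives the vanishing of modes $k\geq 2$ from the Euler--Lagrange equations in Fourier space, whereas you obtain it from the elementary monotonicity estimate $\lambda_k^-\geq 1$ for $k\geq 2$. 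Both routes land on the same finite-dimensional reduction and the same eigenvector bookkeeping for the equality cases.
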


\begin{remark}
  In view of our original problem
  concerning $\Stwo^2$-valued minimizers, we note that setting $\rho_1 = 0$ in \eqref{eq:newminsk23thm} we recover the
  normal vector fields $\pm \n$, and these are the only
  $\Stwo^2$-valued minimizers of the problem \eqref{eq:renoen}-\eqref{eq:contru0} when $\kappa^2 = 3$. Instead, when $0 <
  \kappa^2 < 3$, there are no $\Stwo^2$-valued configurations for which the
  equality sign is achieved in \eqref{eq:PoinconS1R3}.
\end{remark}

A graph of the optimal Poincar{\'e} constant $c^2_{\kappa}$ as a function of
$\kappa^2$ is depicted in Figure~\ref{Fig:optimalPC}. Figure~\ref{fig:k2eq3}represents a plot of the minimal configurations
in \eqref{eq:newminsk23thm} for which the equality sign is attained in the
Poincar{\'e} inequality when $\kappa^2 = 3$. Also, a plot of the minimal
vector fields in \eqref{eq:newminsk24thm} is given in Figure~\ref{fig:klet3}
for different values of $0 < \kappa^2 < 3$.

Before giving the proof of Theorem~\ref{thm:PoincIneq}, we want to point out
some of its consequences.

\begin{proposition}
  For every $\kappa^2 > 0$, the map
  \[ \uu \in H^1( \Stwo^1, \RR^3) \mapsto \left(
     \int_{\Stwo^1} \left| \grad_{\gamma} \uu \right|^2 \mathd \gamma
     + \kappa^2 | \uu \times \n |^2 \mathd \gamma
     \right)^{1 / 2} \]
  is a norm on $H^1( \Stwo^1, \RR^3)$ equivalent to the classical
  norm
  \[ \| \uu \|_{H^1( \Stwo^1, \RR^3)} = \left(
     \int_{\Stwo^1} \left| \grad_{\gamma} \uu \right|^2 \mathd \gamma
     + | \uu |^2 \mathd \gamma \right)^{1 / 2} . \]
\end{proposition}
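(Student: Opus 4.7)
The plan is to deduce this proposition directly from the sharp Poincaré-type inequality of Theorem~\ref{thm:PoincIneq}. Write $N(\uu)^2 \assign \int_{\Stwo^1} |\grad_{\gamma} \uu|^2 \mathd \gamma + \kappa^2 \int_{\Stwo^1} |\uu \times \n|^2 \mathd \gamma$. To show that $N$ is a seminorm, I would observe that the linear map $T : H^1(\Stwo^1,\RR^3) \to L^2(\Stwo^1,\RR^{3\times 2}) \times L^2(\Stwo^1,\RR^3)$ given by $T\uu \assign (\grad_{\gamma}\uu,\, \kappa\, \uu \times \n)$ satisfies $N(\uu) = \|T\uu\|$ in the natural product $L^2$-norm; positive homogeneity and the triangle inequality then follow from the corresponding properties of the $L^2$-norm. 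To upgrade $N$ to a genuine norm, I would verify injectivity of $T$: if $N(\uu) = 0$, then $\grad_{\gamma}\uu \equiv 0$ forces $\uu$ to be $\Stwo^1$-constant, while $\uu \times \n \equiv 0$ forces $\uu(\gamma)$ to be parallel to $\n(\gamma)$ at every $\gamma \in \Stwo^1$. Since $\n$ is not constant on $\Stwo^1$, the only constant vector parallel to $\n$ at every point is $\uu \equiv 0$.

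For the equivalence with $\|\cdot\|_{H^1(\Stwo^1,\RR^3)}$, one inequality is immediate: since $|\n|=1$, the pointwise estimate $|\uu \times \n|^2 \leqslant |\uu|^2$ gives
\begin{equation}
N(\uu)^2 \leqslant \int_{\Stwo^1} |\grad_{\gamma}\uu|^2 \mathd\gamma + \kappa^2 \int_{\Stwo^1}|\uu|^2 \mathd\gamma \leqslant \max(1,\kappa^2)\, \|\uu\|_{H^1(\Stwo^1,\RR^3)}^2.
\end{equation}
The reverse inequality is where Theorem~\ref{thm:PoincIneq} enters: by \eqref{eq:PoinconS1R3},
\begin{equation}
\int_{\Stwo^1} |\uu|^2 \mathd\gamma \;\leqslant\; \frac{1}{c_{\kappa}^2}\, N(\uu)^2,
\end{equation}
provided $c_{\kappa}^2 > 0$. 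Adding $\int|\grad_{\gamma}\uu|^2 \leqslant N(\uu)^2$ gives $\|\uu\|_{H^1(\Stwo^1,\RR^3)}^2 \leqslant (1 + 1/c_{\kappa}^2)\, N(\uu)^2$.

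The only step requiring a brief check is the strict positivity of $c_{\kappa}^2$ for every $\kappa^2 > 0$: for $\kappa^2 \geqslant 3$ this is obvious since $c_{\kappa}^2 = 1$, while for $0 < \kappa^2 \leqslant 3$ the strict inequality $\kappa^4 + 16 < (\kappa^2+4)^2$ yields $\sqrt{\kappa^4+16} < \kappa^2 + 4$, hence $c_{\kappa}^2 = \tfrac{1}{2}(\kappa^2 + 4 - \sqrt{\kappa^4+16}) > 0$. Since there is no genuine obstacle here beyond invoking Theorem~\ref{thm:PoincIneq}, the proof is essentially a corollary; the only subtlety worth flagging is that the equivalence constants degenerate as $\kappa^2 \to 0^+$ (consistent with the fact that at $\kappa=0$ the seminorm $N$ controls only the gradient and constants give a nontrivial kernel).
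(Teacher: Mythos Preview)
Your proof is correct and follows exactly the approach the paper intends: the proposition is stated there as an immediate consequence of Theorem~\ref{thm:PoincIneq} without further details, and your argument supplies precisely those details --- seminorm via linearity, definiteness, the trivial upper bound from $|\uu\times\n|\leqslant|\uu|$, and the lower bound from \eqref{eq:PoinconS1R3} once $c_\kappa^2>0$ is checked. One minor redundancy: your separate verification of definiteness (constant $\uu$ parallel to every $\n(\gamma)$ forces $\uu=0$) is unnecessary once you know $c_\kappa^2>0$, since the Poincar\'e inequality itself gives $N(\uu)=0\Rightarrow\|\uu\|_{L^2}=0$.
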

\begin{figure}[t]
    {\includegraphics[width=\textwidth]{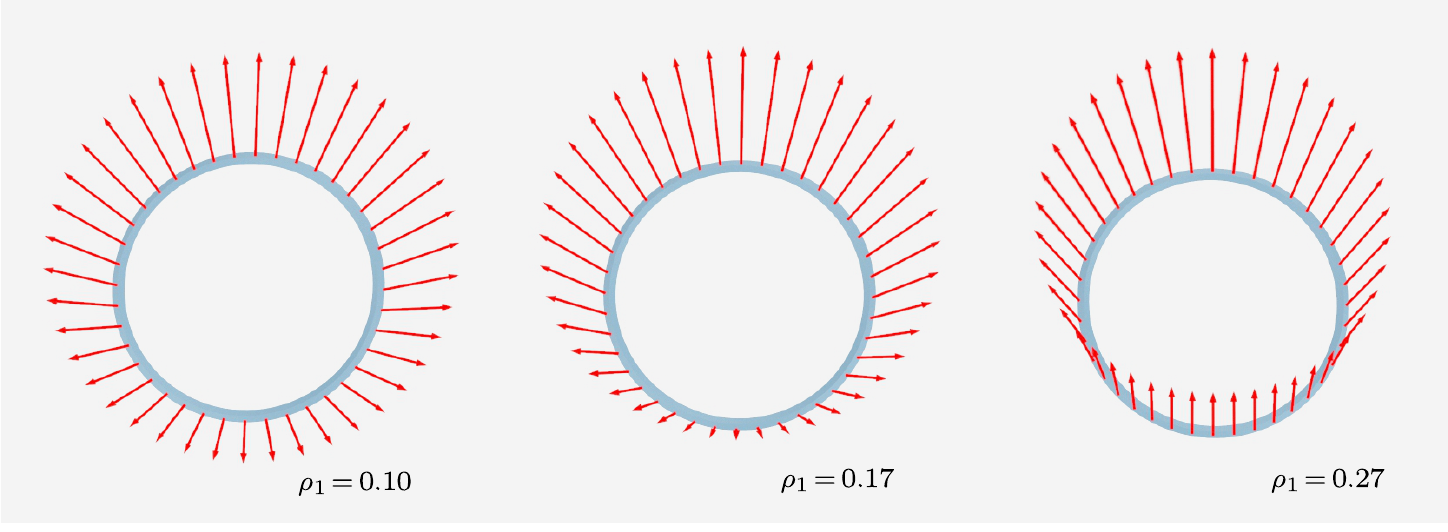}}
    \caption{\label{fig:k2eq3}A plot of the minimal configurations in
    \eqref{eq:newminsk23thm} for which the equality sign is attained in the
    Poincar{\'e} inequality when $\kappa^2 = 3$. The optimal vector fields are
    represented for three different values of $\rho_1$. From left to right, we
    plot \eqref{eq:newminsk23thm} for $\rho_1 = 0.10$, $\rho_1 = 0.17$ and
    $\rho_1 = 0.27$.
    }
  \end{figure}
Next, by Theorem~\ref{thm:PoincIneq} we get that for $\kappa^2 \geqslant 3$
the relaxed minimization problem \eqref{eq:renoen}-\eqref{eq:contru0} admits
$\Stwo^2$-valued minimizers. Thus, as a byproduct of
Theorem~\ref{thm:PoincIneq}, we obtain the following characterization of
micromagnetic ground states in thin cylindrical shells.

\begin{theorem}[\tmname{Micromagnetic ground states in thin cylindrical
  shells}]\label{thm:BFTCylinder} For every value $\kappa^2 > 0$ of the
  anisotropy, the normal vector fields $\pm \n$, as well as the
  constant vector fields $\pm \e_3$ are stationary points of the micromagnetic
  energy functional
  {\opt}{\tmabbr{{\emph{cf.}}}}~{\emph{\eqref{eq:micromagenfunonC}{\cpt}}}
  \[ \mathcal{E} \left( \m \right) = \int_{\mathcal{C}} \left| \grad_{\xi}
     \m \right|^2 \mathd \xi + \kappa^2 \int_{\mathcal{C}} |
     \m \times \n |^2 \mathd \xi, \quad \m \in H^1(
     \mathcal{C}, \Stwo^2), \]
  and the following properties hold:
  \begin{enumerate}
    \item[\emph{i.}] If $\kappa^2 \geqslant 3$, the normal vector fields $\pm
    \n$ are the only global minimizers of the energy functional
    $\mathcal{E}$ in $H^1( \mathcal{C}, \Stwo^2)$. Also, they are
    locally stable for every $\kappa^2 \geqslant 1$ and unstable for $0 <
    \kappa^2 < 1$. Moreover, when $\kappa^2 > 1$, the normal vector fields
    $\pm \n$ are local minimizers of the energy $\mathcal{E}$.
    
    \item[\emph{ii.}] The constant vector fields $\pm \e_3$ are unstable for all $\kappa^2
    > 0$.
  \end{enumerate}
\end{theorem}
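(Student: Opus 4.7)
The plan is to combine Proposition~\ref{Prop:dimred}, which reduces the problem to $z$-invariant competitors on $\Stwo^1$, with the sharp Poincar\'e inequality of Theorem~\ref{thm:PoincIneq}, and then to carry out a second-variation analysis at $\pm\n$ and $\pm\e_3$. For the global minimality and uniqueness claim when $\kappa^2 \geq 3$, observe that for $z$-invariant $\m(z,\gamma) = \uu(\gamma)$ one has $\mathcal{E}(\m) = 2\mathcal{F}(\uu)$ and $\int_{\Stwo^1}|\uu|^2 \mathd\gamma = 2\pi$ for $\Stwo^2$-valued $\uu$; Theorem~\ref{thm:PoincIneq} with $c_\kappa^2 = 1$ therefore gives $\mathcal{F}(\uu) \geq 2\pi$, i.e.\ $\mathcal{E}(\m) \geq 4\pi$. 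A direct computation shows $\mathcal{E}(\pm\n) = 4\pi$, so $\pm\n$ attain the bound. Uniqueness follows from the equality characterization in Theorem~\ref{thm:PoincIneq}: for $\kappa^2 > 3$ equality forces $\uu = \pm\n$ outright, while for $\kappa^2 = 3$ the equality cases are parameterized by the family \eqref{eq:newminsk23thm}, but imposing the pointwise unit-length constraint in that family forces $\rho_1 = 0$ and again isolates $\pm\n$.

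For the local behaviour at $\pm\n$, I would use the graph parameterization $\m = \sqrt{1-a^2-b^2}\,\n + a\,\tmmathbf{\tau} + b\,\e_3$ for small $(a,b) \in H^1(\mathcal{C},\RR^2)$. The exact identity $|\m \times \n|^2 = a^2 + b^2$ (since $\tmmathbf{\tau},\e_3 \perp \n$) combined with the expansion of $|\grad\m|^2$ via $\partial_t \n = \tmmathbf{\tau}$, $\partial_t \tmmathbf{\tau} = -\n$, and the cancellation of the linear term $\int \partial_t a \, \mathd t$ by periodicity, yields
\begin{equation*}
\mathcal{E}(\m) - 4\pi = \int_{\mathcal{C}}\!\bigl[(\partial_t a)^2 + (\partial_t b)^2 + (\partial_z a)^2 + (\partial_z b)^2 + \kappa^2 a^2 + (\kappa^2-1) b^2\bigr]\,\mathd\xi + o(\|(a,b)\|_{H^1}^2).
\end{equation*}
The signs of the coefficients now drive everything: for $\kappa^2 > 1$ the quadratic form is $H^1$-coercive, so $\pm\n$ is a strict local minimizer (hence locally stable); for $\kappa^2 = 1$ it is nonnegative with the direction $b\equiv\text{const}$ in its kernel, giving local stability; for $0 < \kappa^2 < 1$ this same direction ($a\equiv 0$, $b = \varepsilon$ constant) produces an $O(\varepsilon^2)$ negative second variation, hence instability.

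For the instability of $\pm\e_3$ for every $\kappa^2 > 0$, I would exhibit the explicit constant family $\m_\varepsilon = \sqrt{1-\varepsilon^2}\,\e_3 + \varepsilon\,\e_1$, which is $\Stwo^2$-valued and converges to $\e_3$ in $H^1$ as $\varepsilon \to 0$. A direct computation gives $\grad_{\xi} \m_\varepsilon = 0$ and $|\m_\varepsilon \times \n|^2 = 1 - \varepsilon^2 \cos^2 t$, whence $\mathcal{E}(\m_\varepsilon) = 4\pi\kappa^2 - 2\pi\kappa^2\varepsilon^2 < \mathcal{E}(\e_3) = 4\pi\kappa^2$ for every $\varepsilon \neq 0$. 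The main obstacle I expect is upgrading ``positive-definite second variation'' to the actual strict local-minimizer property in $H^1(\mathcal{C}, \Stwo^2)$ when $\kappa^2 > 1$: one must control the cubic and higher remainder in the $|\grad \m|^2$ expansion via Sobolev embedding on $\mathcal{C}$ and the coercivity of the quadratic form, then close a standard perturbative argument.
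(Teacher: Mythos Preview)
Your proposal is correct and follows essentially the same route as the paper: the global statement for $\kappa^2\geq 3$ via Theorem~\ref{thm:PoincIneq} and its equality cases, the second variation $\int|\grad\tmmathbf{\varphi}|^2+(\kappa^2-1)|\tmmathbf{\varphi}|^2$ at $\pm\n$ (which coincides with your quadratic form after writing $\tmmathbf{\varphi}=a\tmmathbf{\tau}+b\e_3$), and a test direction to destabilize $\pm\e_3$. Two small differences are worth noting. First, for the local-minimizer upgrade when $\kappa^2>1$ the paper does not Taylor-expand but computes the finite increment $\mathcal{E}(\n+\vv)-\mathcal{E}(\n)$ \emph{exactly} in terms of $\tmmathbf{\varphi}=\vv-(\vv\cdot\n)\n$ and the relation $-2(\vv\cdot\n)=|\vv|^2$; this sidesteps the one subtlety in your graph parameterization (in two dimensions $H^1$-closeness to $\n$ does not force $\m\cdot\n\geq 0$ pointwise, so $\sqrt{1-a^2-b^2}$ may be ill-defined), and reduces the remainder to a single cross-term controlled by Young and the Gagliardo--Nirenberg inequality $\|\vv\|_{L^4}^4\leq c_L\|\vv\|_{H^1}^2\|\vv\|_{L^2}^2$, which is precisely the ``Sobolev embedding'' closure you anticipate. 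Second, for the instability of $\pm\e_3$ the paper simply tests the second variation against $\tmmathbf{\varphi}=\e_1$, obtaining $\mathcal{F}''(\pm\e_3)(\e_1)=-2\pi\kappa^2<0$; your explicit constant family $\m_\varepsilon=\sqrt{1-\varepsilon^2}\,\e_3+\varepsilon\e_1$ is an equivalent and slightly more hands-on demonstration of the same fact.
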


\begin{remark}
  Although the constant vector fields $\pm \e_3$ are unstable for all
  $\kappa^2 > 0$, they are stable in the class of axially symmetric
  minimizers, as has been shown in Theorem~\ref{cor:axsymmmin}. 
\end{remark}
 \begin{figure}[t]
    {\includegraphics[width=\textwidth]{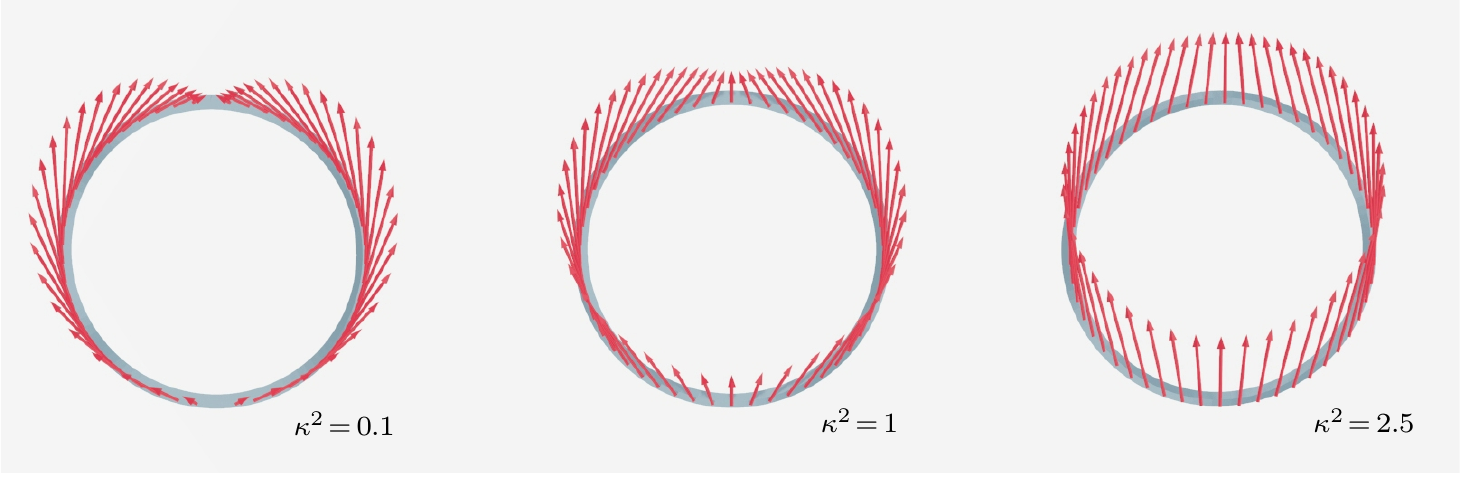}}
    \caption{\label{fig:klet3}A plot of the minimal vector fields in
    \eqref{eq:newminsk24thm} for which the equality sign is attained in the
    Poincar{\'e} inequality when $0 < \kappa^2 < 3$. From left to right, we
    plot \eqref{eq:newminsk24thm} for $\kappa^2 = 0.1$, $\kappa^2 = 1$, and
    $\kappa^2 = 2.5$.
    }
  \end{figure}

\begin{proof}
  In coordinates, the energy functional ({\tmabbr{cf.}}~\eqref{eq:reden})
  reads as
  \begin{equation}
    \mathcal{F} (\m) = \int_{- 1}^1 \int_{- \pi}^{\pi} \left|
    \grad \m \right|^2 +
    \kappa^2 \left| \m \times \n \right|^2  \hspace{0.17em} \mathd t \, \mathd z,
  \end{equation}
  with $\m \in H^1 (\mathcal{C}, \Stwo^2)$ and
  $\grad = (\partial_z,
  \partial_t)$. The Euler-Lagrange associated with $\mathcal{F}$ read as
  \begin{equation}
    - \Delta \m - \kappa^2  (\m \cdot \n) \n = \lambda (t)
    \m, \qquad \lambda (t) = |
    \grad \m |^2 - \kappa^2
    (\m \cdot \n)^2 .
  \end{equation}
  It is therefore clear that $\m (t) = \pm \e_3$ and $\m (t) = \pm \n (t)$ are
  critical points of the above energy. Let us investigate their stability. The
  second variation of $\mathcal{F}$ is given by
  \begin{equation}
    \mathcal{F}'' (\m) (\tmmathbf{\varphi}) = \int_{- 1}^1 \int_{- \pi}^{\pi}
    | \grad
    \tmmathbf{\varphi}|^2 - \kappa^2  (\tmmathbf{\varphi} \cdot \n)^2 - (|
    \grad \m |^2 - \kappa^2
    (\m \cdot \n)^2) |\tmmathbf{\varphi}|^2 \mathd t \, \mathd z,
  \end{equation}
  and is defined on all $\tmmathbf{\varphi} \in H^1\cap L^\infty (\mathcal{C},
  \RR^3)$ such that $\tmmathbf{\varphi} \cdot \m
  \equiv 0$ in $\mathcal{C} $.
  
  {\noindent}{\emph{Proof of i.}} It is clear from Theorem~\ref{thm:PoincIneq}
  that $\pm \tmmathbf{n}$ are the only global minimizers of $\mathcal{F}$
  whenever $\kappa^2 \geqslant 3$. It remains to analyze their stability in
  the range $0 < \kappa^2 < 3$. For that, it is sufficient to evaluate the
  second variation at $\m = \pm \tmmathbf{n}$, which reads as
  \begin{equation}
    \mathcal{F}'' (\pm \n) (\tmmathbf{\varphi}) = \int_{- 1}^1 \int_{-
    \pi}^{\pi} | \grad
    \tmmathbf{\varphi}|^2 + (\kappa^2 - 1) |\tmmathbf{\varphi}|^2 \mathd t \,
    \mathd z. \label{eq:secvarF}
  \end{equation}
  Setting $\tmmathbf{\varphi}= \e_3$ we obtain that $\mathcal{F}'' (\pm \n)
  (\e_3) = 4 \pi (\kappa^2 - 1)$. Therefore the normal vector fields $\pm \n$
  are unstable critical points of $\mathcal{F}$ when $\kappa^2 < 1$.
  
  However, \eqref{eq:secvarF} shows that for $\kappa^2 > 1$ we have uniform
  local stability of the critical points $\pm \n$. We want to show that $\pm
  \n$ are, in fact, local minimizers of the energy functional $\mathcal{F}$.
  We focus on $+ \n$, the argument for $- \n$ being the same. Following
  {\cite{Di_Fratta_2019}}, first, we compute the finite variation of
  $\mathcal{F}$ corresponding to an admissible increment $\tmmathbf{v}$ of
  $\n$, i.e., $\vv \in H^1 (\mathcal{C}, \RR^3)$
  such that $\left| \n + \vv \right| = 1$ in $\mathcal{C}$. A simple
  computation gives
  \begin{equation}
    \mathcal{F} (\n + \vv) -\mathcal{F} (\n) = \int_{- 1}^1 \int_{- \pi}^{\pi}
    \left| \grad \vv
    \right|^2 + \kappa^2 \left( \left| \vv \right|^2 - (\tmmathbf{v} \cdot
    \n)^2 \right) - \left| \vv \right|^2 \hspace{0.17em} \mathd t \, \mathd z.
    \label{eq:tempv1}
  \end{equation}
  Next, we define $\tmmathbf{\varphi}=\tmmathbf{v}- (\tmmathbf{v} \cdot \n)
  \n$ and want to rewrite the previous expression in $\tmmathbf{\varphi}$.
  Since $\tmmathbf{v}=\tmmathbf{\varphi}+ (\tmmathbf{v} \cdot \n) \n$ we have
  $\partial_t \tmmathbf{v}= \partial_t \tmmathbf{\varphi}+ (\tmmathbf{v} \cdot
  \n) \tmmathbf{\tau}+ \left( \partial_t (\tmmathbf{v} \cdot \n) \right) \n$
  with $\tmmathbf{\tau} \assign \partial_t \n$. Using that $\partial_t
  \tmmathbf{\varphi} \cdot \n = -\tmmathbf{\varphi} \cdot \tmmathbf{\tau}$
  because of $\tmmathbf{\varphi} \cdot \n = 0$, we obtain
  \begin{equation}
    | \partial_t \tmmathbf{v} |^2 = | \partial_t \tmmathbf{\varphi} |^2 +
    (\tmmathbf{v} \cdot \n)^2 + \left| \partial_t (\tmmathbf{v} \cdot \n)
    \right|^2 + 2 (\partial_t \tmmathbf{\varphi} \cdot \tmmathbf{\tau})
    (\tmmathbf{v} \cdot \n) - 2 (\tmmathbf{\varphi} \cdot \tmmathbf{\tau})
    \partial_t (\tmmathbf{v} \cdot \n) .
  \end{equation}
  Integrating by parts the previous relation, we infer that
  \begin{equation}
    \int_{- 1}^1 \int_{- \pi}^{\pi} | \partial_t \tmmathbf{v} |^2 \mathd t
    \mathd z = \int_{- 1}^1 \int_{- \pi}^{\pi} | \partial_t \tmmathbf{\varphi}
    |^2 + (\tmmathbf{v} \cdot \n)^2 + \left| \partial_t (\tmmathbf{v} \cdot
    \n) \right|^2 + 4 (\partial_t \tmmathbf{\varphi} \cdot \tmmathbf{\tau})
    (\tmmathbf{v} \cdot \n) \mathd t \, \mathd z. \label{eq:exprdzf2rev}
  \end{equation}
  Similarly, we have
  \begin{equation}
    | \partial_z \tmmathbf{\varphi} |^2 = \left| \partial_z \tmmathbf{v}-
    (\partial_z \tmmathbf{v} \cdot \n) \n \right|^2 = | \partial_z
    \tmmathbf{v} |^2 - \left| \partial_z \left( \tmmathbf{v} \cdot \n \right)
    \right|^2 . \label{eq:exprdzf2}
  \end{equation}
  Hence, plugging the previous expression into \eqref{eq:tempv1}, using that
  $| \tmmathbf{\varphi} |^2 = | \tmmathbf{v} |^2 - (\tmmathbf{v} \cdot \n)^2$,
  and taking into account \eqref{eq:secvarF}, we obtain
  \begin{eqnarray}
    \mathcal{F} (\n + \vv) -\mathcal{F} (\n) & = & \int_{- 1}^1 \int_{-
    \pi}^{\pi} \left| \grad
    \vv \right|^2 + \kappa^2 | \tmmathbf{\varphi} |^2 - \left| \vv \right|^2 
    \hspace{0.17em} \mathd t \, \mathd z \nonumber\\
    & \overset{\eqref{eq:exprdzf2rev}}{=} & \int_{- 1}^1 \int_{- \pi}^{\pi} |
    \grad
    \tmmathbf{\varphi} |^2 + (\kappa^2 - 1) | \tmmathbf{\varphi} |^2 + \left|
    \grad (\tmmathbf{v}
    \cdot \n) \right|^2 + 4 (\partial_t \tmmathbf{\varphi} \cdot
    \tmmathbf{\tau}) (\tmmathbf{v} \cdot \n) \mathd t \, \mathd z \nonumber\\
    & \overset{\eqref{eq:secvarF}}{=} & \mathcal{F}'' (\n)
    (\tmmathbf{\varphi}) + \int_{- 1}^1 \int_{- \pi}^{\pi} \left|
    \grad (\tmmathbf{v}
    \cdot \n) \right|^2 + 4 (\partial_t \tmmathbf{\varphi} \cdot
    \tmmathbf{\tau}) (\tmmathbf{v} \cdot \n) \mathd t \, \mathd z. 
    \label{eq:secvt2}
  \end{eqnarray}
  Note that since $- 2\tmmathbf{v} \cdot \n = |\tmmathbf{v}|^2$ and
  $\tmmathbf{v}=\tmmathbf{\varphi}+ (\tmmathbf{v} \cdot \n) \n$ we have that
  $|\tmmathbf{v}|^2 = |\tmmathbf{\varphi}|^2 + \frac{1}{4} |\tmmathbf{v}|^4$.
  Overall, from the previous considerations and \eqref{eq:secvt2}, we obtain
  that
  \begin{align}
    \mathcal{F} (\n + \vv) -\mathcal{F} (\n) &\;  = \; \mathcal{F}'' (\n)
    (\tmmathbf{\varphi}) + \int_{- 1}^1 \int_{- \pi}^{\pi} \left|
    \grad (\tmmathbf{v}
    \cdot \n) \right|^2 - 2 (\partial_t \tmmathbf{\varphi} \cdot
    \tmmathbf{\tau}) | \tmmathbf{v} |^2 \mathd t \, \mathd z \notag \\
    &\; \geqslant \; \mathcal{F}'' (\n) (\tmmathbf{\varphi}) - \int_{- 1}^1
    \int_{- \pi}^{\pi} \left( 2 | \tmmathbf{v} |^4 + \frac{1}{2} | \partial_t
    \tmmathbf{\varphi} |^2 \right) \mathd t \, \mathd z \notag \\
    &\; \geqslant \; \frac{1}{2} \|
    \grad
    \tmmathbf{\varphi}\|_{L^2 (\mathcal{C})}^2 + (\kappa^2 - 1)
    \|\tmmathbf{v}\|_{L^2 (\mathcal{C})}^2 - \left( 2 + \frac{\kappa^2 - 1}{4}
    \right) \|\tmmathbf{v}\|_{L^4 (\mathcal{C})}^4 . 
  \end{align}
  Now we use Gagliardo--Nirenberg inequality (see, e.g.,
  {\cite{fiorenza2018detailed}}), i.e., the existence of a positive constant
  $c_L > 0$ such that $\|\tmmathbf{v}\|_{L^4 (\mathcal{C})}^4 \leqslant c_L
  \|\tmmathbf{v}\|_{H^1 (\mathcal{C})}^2 \|\tmmathbf{v}\|_{L^2
  (\mathcal{C})}^2$ for every $\tmmathbf{v} \in H^1 (\mathcal{C},
  \RR^3)$. Therefore
  \begin{equation}
    \mathcal{F} (\n + \vv) -\mathcal{F} (\n) \; \geqslant \; \frac{1}{2} \|
    \grad
    \tmmathbf{\varphi}\|_{L^2 (\mathcal{C})}^2 + \left[ (\kappa^2 - 1) - c_L
    \left( 2 + \frac{\kappa^2 - 1}{4} \right) \|\tmmathbf{v}\|_{H^1
    (\mathcal{C})}^2 \right] \|\tmmathbf{v}\|_{L^2 (\mathcal{C})}^2 . 
  \end{equation}
  To conclude, we observe that for a given $\kappa^2 > 1$ the previous
  right-hand side is nonnegative as soon as $\|\tmmathbf{v}\|_{H^1
  (\mathcal{C})}^2$ is chosen small enough.
  
  {\noindent}{\emph{Proof of ii.}} Testing the second variation at $\m = \pm
  \e_3$ against $\vv = \e_1$ we obtain
  \begin{equation}
    \mathcal{F}'' (\pm \e_3) (\e_1) = - \kappa^2  \int_{- 1}^1 \int_{-
    \pi}^{\pi} (\e_1 \cdot \n)^2 \mathd t \, \mathd z = - 2\pi \kappa^2 < 0.
  \end{equation}
  Therefore we know that the constant
  $\Stwo^2$-valued vector fields $\m = \pm \e_3$
  are unstable for all $\kappa^2 > 0$.
\end{proof}

\begin{remark}
  As already pointed out in the introduction, there are several analogies in
  the behavior of the minimizers of the micromagnetic energy in cylindrical
  and spherical surfaces. However, there are also remarkable differences.
  Indeed, in both cases, the normal vector fields are the unique
  global minimizers of the energy functional in a wide range of
  parameters~{\cite{Di_Fratta_2019}}. However, the topological consequences
  are very different. On the one hand, the normal vector fields $\pm
  \n_{\Stwo^2}$ to $\Stwo^2$ carry a different skyrmion number
  because $\jdeg \left( \pm \n_{\Stwo^2} \right) = \pm 1$, and, by
  Hopf theorem {\cite{milnor1997topology}}, they cannot be homotopically
  mapped from one to the other. This translates into the so-called topological
  protection of the ground states. On the other hand, due to the odd
  dimension, the two normal vector fields $\pm \n$ to $\Stwo^1$ have the same
  degree, and therefore, they can be ``easily'' switched from one to the other
  through appropriate external fields.
\end{remark}

\begin{remark}
  The result of Theorem~\ref{thm:BFTCylinder} can be interpreted in terms of
  the size of the magnetic particle. Indeed, a simple rescaling of the energy
  functional \eqref{eq:micromagenfunonC} shows that the range $\kappa^2
  \geqslant 3$ corresponds to the geometric regime of cylindrical magnets with
  a large radius. This is the dual version of Brown's fundamental theorem on
  $3 d$ fine ferromagnetic particles, which shows the existence of a critical
  diameter below which the unique energy minimizers are the constant-in-space
  magnetizations~{\cite{BrownA1968,di2012generalization,alouges2015liouville}}.
\end{remark}

\subsection{Proof of the sharp Poincar{\'e} inequality
(Theorem~\ref{thm:PoincIneq})}To prove Theorem~\ref{thm:PoincIneq}, we need
the following result which, in particular, shows that the constant vector
field $\tmmathbf{e}_3 \in \Stwo^2$ is never a global minimizer of the relaxed
minimization problem \eqref{eq:renoen}-\eqref{eq:contru0}. In fact, any
{\emph{critical point}} of $\mathcal{G}$ with energy strictly less than
$\kappa^2$ (in particular, any minimizer) is in-plane.

\begin{lemma}
  \label{lemma:e3eq0}Let $\kappa^2 \neq 0$. Any critical point $\uu
  \in H^1( \Stwo^1, \RR^3)$ of the relaxed problem
  {\emph{\eqref{eq:renoen}-\eqref{eq:contru0}}} satisfying the energy bound
  $\GG (\uu) < \kappa^2$ is in-plane, i.e.,
  \begin{equation}
    \uu (\gamma) \cdot \tmmathbf{e}_3 = 0 \quad \forall \gamma \in
    \Stwo^1 . \label{eq:u3eq0}
  \end{equation}
  Moreover, the minimal energy satisfies the energy bounds
  \begin{equation}
    0 < \GG (\uu) \leqslant \min \left\{ \frac{\kappa^2}{2}, 1
    \right\} \label{eq:ubene0} .
  \end{equation}
  In particular, every minimizer of the relaxed problem
  {\emph{\eqref{eq:renoen}-\eqref{eq:contru0}}} is in-plane.
\end{lemma}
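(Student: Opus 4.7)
My plan is to exploit a decoupling of the axial component $u_3 \assign \uu \cdot \e_3$ in the Euler--Lagrange equation of the relaxed problem \eqref{eq:renoen}--\eqref{eq:contru0}. Using the identity $|\uu\times\n|^2 = |\uu|^2 - (\uu\cdot\n)^2$, which holds because $|\n|=1$, the Euler--Lagrange equation associated with the $L^2$-constraint reads, in local coordinates,
\[
  -\partial_t^2 \uu + \kappa^2\bigl(\uu - (\uu\cdot\n)\n\bigr) = \lambda\,\uu \quad\text{on } \Stwo^1,
\]
where $\lambda\in\RR$ is the Lagrange multiplier. Testing this equation against $\uu$ itself and invoking the normalization \eqref{eq:contru0} identifies the multiplier as $\lambda = \GG(\uu)$.

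The key observation is that $\n\cdot\e_3 = 0$, so projecting the Euler--Lagrange equation onto $\e_3$ annihilates the anisotropy correction and yields the scalar one-dimensional eigenvalue problem $-u_3'' + \kappa^2 u_3 = \lambda\, u_3$ on $\Stwo^1$. Multiplying by $u_3$ and integrating gives
\[
  \int_{\Stwo^1} |\partial_t u_3|^2 \mathd t + \kappa^2 \int_{\Stwo^1} u_3^2 \mathd t = \lambda \int_{\Stwo^1} u_3^2 \mathd t,
\]
so that either $u_3\equiv 0$ or $\lambda \geqslant \kappa^2$. Since $\lambda = \GG(\uu) < \kappa^2$ by hypothesis, the second alternative is ruled out, proving \eqref{eq:u3eq0}.

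For the energy bounds \eqref{eq:ubene0}, I would exhibit two explicit admissible competitors. The tangential field $\uu(t) \assign \n(t)$ satisfies $|\uu|=1$, $|\partial_t \n|=1$, and $\uu\times\n = 0$, whence $\GG(\n)=1$. The constant field $\uu\equiv\e_1$ satisfies $|\uu|=1$, $\partial_t\uu=0$, and $|\e_1\times\n(t)|^2=\sin^2 t$, whence $\GG(\e_1)=\kappa^2/2$. For strict positivity, note that $\GG(\uu)=0$ would force $\uu$ to be constant and simultaneously parallel to every $\n(t)$, $t\in[-\pi,\pi]$, which is only possible for $\uu\equiv 0$ — in contradiction with \eqref{eq:contru0}. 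The final statement then follows immediately: any minimizer $\uu$ satisfies $\GG(\uu) \leqslant \min\{\kappa^2/2,1\} \leqslant \kappa^2/2 < \kappa^2$, so the first part of the lemma applies and $\uu$ is in-plane.

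The only delicate step is the derivation and exploitation of the decoupled eigenvalue equation for $u_3$; in particular, I need to verify carefully that the anisotropy term $\kappa^2(\uu\cdot\n)\n$ contributes nothing to the $\e_3$-component, which hinges entirely on $\n \in \RR^2\times\{0\}$. Everything else — the identification $\lambda=\GG(\uu)$, the construction of the test fields, and the concluding comparison — is routine.
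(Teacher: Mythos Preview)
Your proof is correct and follows essentially the same route as the paper: derive the Euler--Lagrange equation, identify the multiplier as $\lambda = \GG(\uu)$, project onto $\e_3$ to decouple $u_3$, and exhibit the competitors $\n$ and a constant in-plane unit vector for the bounds. The only minor variation is in the final step, where the paper solves the scalar ODE $u_3'' = (\kappa^2 - \lambda)\,u_3$ explicitly and observes that the only periodic solution is zero, while you test the equation against $u_3$ and integrate to force $\lambda \geqslant \kappa^2$ unless $u_3\equiv 0$ --- both arguments are valid and yield the same conclusion.
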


\begin{proof}
  In terms of the standard parameterization of the unit circle $\Stwo^1$, the
  problem reduces to the minimization, in the Sobolev space of periodic
  functions $H^1_{\sharp} ( [- \pi, \pi], \RR^3)$, of the energy
  functional
  \begin{equation}
    \GG (\uu) \assign \frac{1}{2 \pi} \int_{- \pi}^{\pi} | \partial_t
    \uu |^2 \mathd t \; + \; \frac{\kappa^2}{2 \pi} \int_{-
    \pi}^{\pi} | \uu \times \n |^2 \mathd t,
    \label{eq:FscriptforPoinc}
  \end{equation}
  subject to the constraint
  \begin{equation}
    \frac{1}{2 \pi} \int_{- \pi}^{\pi} | \uu |^2 \mathd t = 1 .
    \label{eq:contru}
  \end{equation}
  We start observing that as soon as $\kappa^2 \neq 0$ the minimal energy is
  {\emph{strictly}} positive. Indeed, if $\GG (\uu) = 0$, then one
  gets at the same time $\uu=\n$ and $\uu \assign
  \tmmathbf{\sigma}$ for some $\tmmathbf{\sigma} \in \Stwo^2$ (because of the
  constraint \eqref{eq:contru}). Thus $\GG (\uu) > 0$ for every
  $\uu \in H^1_{\sharp}( [- \pi, \pi], \RR^3)$
  satisfying the constraint \eqref{eq:contru}. In fact, the minimum in the
  class of constant $\Stwo^2$-valued configurations is reached by any element
  of the class of in-plane uniform fields, i.e., by any configuration
  $\tmmathbf{\sigma} \in \Stwo^2$ such that $\tmmathbf{\sigma} \cdot
  \tmmathbf{e}_3 = 0$. The common minimum value in this class being
  \begin{equation}
    \GG (\tmmathbf{\sigma}) = \frac{\kappa^2}{2 \pi} \int_{- \pi}^{\pi} |
    \tmmathbf{\sigma} |^2 - (\tmmathbf{\sigma} \cdot \n)^2 \mathd t
    = \frac{\kappa^2}{2} > 0.
  \end{equation}
  Also, note that since $| \partial_t \n |^2 = 1$ we have $\GG
  (\n) = 1$ regardless of the value of $\kappa$. Therefore, if
  $\uu$ is a minimum point of the relaxed minimization problem
  \eqref{eq:FscriptforPoinc}-\eqref{eq:contru} then
  \begin{equation}
    0 < \GG (\uu) \leqslant \min \left\{ \frac{\kappa^2}{2}, 1
    \right\} .
  \end{equation}
  This proves \eqref{eq:ubene0}.
  
  Next, we consider the Euler--Lagrange equations associated with the relaxed
  minimization problem \eqref{eq:FscriptforPoinc}-\eqref{eq:contru}. A direct
  computation yields that if $\uu \in H^1_{\sharp} ( [- \pi,
  \pi], \RR^3)$ is a minimizer, then it satisfies the equations
  \begin{equation}
    - \partial_{t  t} \uu+ \kappa^2 (\uu- (\n
    \otimes \n) \uu) = \lambda \uu \quad
    \text{in } \left( H^1_{\sharp} ( [- \pi, \pi], \RR^3 )
    \right)', \quad \frac{1}{2 \pi} \int_{- \pi}^{\pi} | \uu |^2
    \mathd t = 1,  \label{eq:ELFu}
  \end{equation}
  for some Lagrange multiplier $\lambda \in \RR$. To ease notation, we write
  the Euler-Lagrange equations \eqref{eq:ELFu} in their (distributional) form;
  however, behind the scenes, we always mean the associated weak formulation
  in $H^1_{\sharp} ( [- \pi, \pi], \RR^3)$. To get \eqref{eq:ELFu}
  it is sufficient to note that in $H^1_{\sharp} ( [- \pi, \pi], \RR^3
 )$ one has
  \begin{align}
    | \uu \times \n+ \varepsilon \tmmathbf{\varphi} \times
    \n |^2 - | \uu \times \n |^2 +\mathcal{O}
    (\varepsilon^2) =\; & 2 \varepsilon (\uu \times \n)
    \cdot (\tmmathbf{\varphi} \times \n) \\
     =\; & 2 \varepsilon \tmmathbf{\varphi} \cdot (\uu-
    (\n \otimes \n) \uu) . 
  \end{align}
  To get an explicit expression of the Lagrange multiplier $\lambda \in \RR$
  we dot multiply by $\uu$ both members of \eqref{eq:ELFu}. Taking
  into account the constraint \eqref{eq:contru}, we get that
  \begin{equation}
    \frac{1}{2 \pi} \int_{- \pi}^{\pi} | \partial_t \uu |^2 +
    \kappa^2 (| \uu |^2 - (\uu_{\bot} \cdot \n)^2)
    \mathd t = \frac{\lambda}{2 \pi} \int_{- \pi}^{\pi} | \uu
    |^2 \mathd t, 
  \end{equation}
  from which the relation $\lambda = \GG (\uu)$ follows. Combining
  this observation with \eqref{eq:ubene0}, we get that if $\uu$ is a
  minimizer of the constrained minimization problem
  \eqref{eq:FscriptforPoinc}-\eqref{eq:contru} then necessarily
  \begin{equation}
    0 < \lambda \equiv \GG (\uu) \leqslant \min \left\{
    \frac{\kappa^2}{2}, 1 \right\} \label{eq:ubene} .
  \end{equation}
  To conclude the proof it is sufficient to prove that every solution $\uu$ of the Euler--Lagrange equations 
  \eqref{eq:ELFu} such that $ \GG (\uu)<\kappa^2$ is an in-plane configuration. For that, we dot multiply by
  $\tmmathbf{e}_3$ on both sides of \eqref{eq:ELFu} to get the relation
  \begin{equation}
    \partial_{t t} u_3 = \mu \cdot u_3, \quad \text{with } \mu = \kappa^2 -
    | \lambda | .
  \end{equation}
  Note that $\mu = | \mu | > 0$ whenever $\uu$ satisfies the energy
  bound $\GG (\uu) < \kappa^2$. In this case, the general solution of
  the previous equation is given by
  \begin{equation}
    u_3 (t) \assign c_1 e^{t \sqrt{| \mu |}} + c_2 e^{- t \sqrt{| \mu |}},
    \quad c_1, c_2 \in \RR .
  \end{equation}
  The only periodic solution of the previous equation is the zero solution.
  Therefore, any {\emph{critical point}} $\uu \in H^1_{\sharp} (
  [- \pi, \pi], \RR^3)$ of the problem
  \eqref{eq:FscriptforPoinc}-\eqref{eq:contru} satisfying the energy bound
  $\GG (\uu) < \kappa^2$ is in-plane. In particular, because of
  \eqref{eq:ubene}, any minimizer of the problem
  \eqref{eq:FscriptforPoinc}-\eqref{eq:contru} is in-plane. This concludes the
  proof.
\end{proof}

\begin{proof}{Proof of Theorem \ref{thm:PoincIneq}.}
  To deduce the sharp Poincar{\'e} inequality \eqref{eq:PoinconS1R3} one has
  to solve the minimization problem for $\GG$ given by
  \eqref{eq:renoen}-\eqref{eq:contru0}. Indeed, let $\uu_{\ast} \in
  H^1( \Stwo^1, \RR^3)$ be a minimizer of $\GG$ in the class of
  vector fields $\uu \in H^1( \Stwo^1, \RR^3)$ that
  satisfy the $L^2$-constraint $\| \uu \|^2_{L^2 ( \Stwo^1,
  \RR^3)} = 2 \pi$, and set $c^2_{\kappa} \assign \GG
  (\uu_{\ast})$. For every $\tmmathbf{v} \in H^1( \Stwo^1,
  \RR^3 ) \setminus \{ 0 \}$ the configuration
  \begin{equation}
    \uu \assign \sqrt{2 \pi} \frac{\tmmathbf{v}}{\| \tmmathbf{v}
    \|_{L^2 ( \Stwo^1, \RR^3 )}}
  \end{equation}
  is an admissible competitor of the minimization problem for $\GG$ in
  \eqref{eq:renoen}-\eqref{eq:contru0}. Therefore
  \begin{equation}
    \GG (\uu) = \frac{1}{\| \tmmathbf{v} \|_{L^2 ( \Stwo^1,
    \RR^3 )}^2} \left( \int_{\Stwo^1} \left| \grad_{\gamma} \tmmathbf{v}
    \right|^2 \mathd \gamma + \kappa^2 \int_{\Stwo^1} | \tmmathbf{v} \times
    \n |^2 \mathd \gamma \right) \geqslant \GG (\uu_{\ast})
    = c_{\kappa}^2 .
  \end{equation}
  Multiplying all sides of the previous relation by $\| \tmmathbf{v} \|_{L^2
  ( \Stwo^1, \RR^3 )}^2$ we get the sharp Poincar{\'e} inequality
  \eqref{eq:PoinconS1R3} with $c^2_{\kappa}$ being the minimal energy of $\GG$
  subject to \eqref{eq:contru0}. The equality sign is achieved by any
  minimizer of the relaxed problem for $\GG$ in
  \eqref{eq:renoen}-\eqref{eq:contru0}.
  
  According to Lemma~\ref{lemma:e3eq0}, it is possible to restrict our
  attention to the class of vector fields in $\uu_{\bot} \in H^1
  ( \Stwo^1, \RR^2)$ satisfying the $L^2$-constraint
  \begin{equation}
    \frac{1}{2 \pi} \int_{\Stwo^1} | \uu_{\bot} |^2 \mathd \gamma =
    1, \label{eq:constrR2}
  \end{equation}
  and the minimization problem \eqref{eq:renoen}-\eqref{eq:contru0} reduces to
  the minimization of
  \begin{equation}
    \GG (\uu_{\bot}) \assign \frac{1}{2 \pi} \int_{\Stwo^1} \left|
    \grad_{\gamma} \uu_{\bot} \right|^2 \mathd \gamma +
    \frac{\kappa^2}{2 \pi} \int_{\Stwo^1} (| \uu_{\bot} |^2 -
    (\uu_{\bot} \cdot \n)^2) \mathd \gamma
    \label{eq:enFuperp}
  \end{equation}
  in $H^1_{\sharp} ( \Stwo^1, \RR^2)$ subject to the constraint
  \eqref{eq:constrR2}. To solve this minimization problem, we use Fourier
  analysis, and we work in local coordinates, i.e., in $H^1_{\sharp} ( [-
  \pi, \pi], \RR^2)$. Note that, in local coordinates, the
  Euler--Lagrange equations \eqref{eq:ELFu} simplify to
  \begin{equation}
    \partial_{t  t} \uu_{\bot} + \kappa^2 (\n \otimes
    \n) \uu_{\bot} = | \mu | \uu_{\bot}, \qquad
    | \mu | \assign \kappa^2 - | \lambda | > \frac{\kappa^2}{2} .
    \label{eq:ELinuperp}
  \end{equation}
  We consider the moving orthonormal frame of $\RR^2$ given by
$(\tmmathbf{\tau}, \n)$, with both $\tmmathbf{\tau} \assign \partial_t \n$ and
$\n \in C^{\infty}_{\sharp} ([- \pi, \pi], \RR^2)$. More explicitly, we have
$\tmmathbf{\tau} (t) = (- \sin t, \cos t)$ and $\n (t) = (\cos t, \sin t)$.
Then, we set
\begin{equation}
  \uu_{\bot} = u_1 \tmmathbf{\tau}+ u_2 \n \label{eq:decompuperpj}
\end{equation}
with $u_1 \assign \uu_{\bot} \cdot \tmmathbf{\tau}$ and $u_2 \assign
\uu_{\bot} \cdot \n$. Note that both $u_1$ and $u_2$ belong to $H^1_{\sharp}
([- \pi, \pi], \RR)$.

  In this moving frame, the energy \eqref{eq:enFuperp} assumes the expression
\begin{equation}
  \GG (\uu_{\bot}) = \frac{1}{2 \pi}  \int_{- \pi}^{\pi} (\partial_t u_1 (t) +
  u_2 (t))^2 + (\partial_t u_2 (t) - u_1 (t))^2 + \kappa^2 u_1^2 (t) \mathd t
  \label{eq:FtoFourier}
\end{equation}
and the constraint \eqref{eq:constrR2} reads as
\begin{equation}
  \frac{1}{2 \pi}  \int_{- \pi}^{\pi} u_1^2 (t) + u_2^2 (t) \mathd t = 1.
  \label{eq:constforFour}
\end{equation}
In what follows, we denote by $\dot{\ell}_2 (\ZZ, \CC^2)$ the Sobolev space of
square summable sequences $(\tmmathbf{c}_n)_{n \in \ZZ} \assign (c_{1, n},
c_{2, n})_{n \in \ZZ}$ in $\ell_2 (\ZZ, \CC^2)$ such that
$(n\tmmathbf{c}_n)_{n \in \ZZ}$ is still in $\ell_2 (\ZZ, \CC^2)$. Every
in-plane vector field $\uu_{\bot} \in H^1_{\sharp} ([- \pi, \pi], \RR^2)$,
with components $u_1 \assign \uu_{\bot} \cdot \tmmathbf{\tau}$ and $u_2
\assign \uu_{\bot} \cdot \n$, can then be represented in Fourier series as
follows
\begin{equation}
  u_1 (t) = \sum_{n \in \ZZ} c_{1, n} e^{int}, \qquad u_2 (t) = \sum_{n \in
  \ZZ} c_{2, n} e^{int}, \label{eq:expruFourier}
\end{equation}
for some $(\tmmathbf{c}_n)_{n \in \ZZ} \assign (c_{1, n}, c_{2, n})_{n \in
\ZZ} \in \dot{\ell}_2 (\ZZ, \CC^2)$ and with $i$ the imaginary unit. As a side
remark, we note that
\begin{equation}
  \langle \uu_{\bot} \rangle = \langle u_1 \tmmathbf{\tau} \rangle + \langle
  u_2 \n \rangle = \sum_{n \in \{\pm 1\}} \langle c_{1, n} e^{int}
  \tmmathbf{\tau} \rangle + \sum_{n \in \{\pm 1\}} \langle c_{2, n} e^{int} \n
  \rangle .
\end{equation}
Therefore, the information on the average of $\uu_{\bot}$ is contained in the
harmonics of order $|n| = 1$.

Next, we represent the energy functional $\GG$ given by \eqref{eq:FtoFourier},
in the Fourier domain. For that, we use Parseval's theorem to restate the
energy $\GG$ in \eqref{eq:FtoFourier} in terms of Fourier coefficients as
follows
\[ \GG (\uu_{\bot}) = \sum_{n \in \ZZ} (n^2 + 1) (| c_{1, n} |^2 + | c_{2, n}
   |^2) + \kappa^2 | c_{1, n} |^2 - 4 n \Im [c_{1, n} \overline{c_{2, n}}] .
\]
Also, we take advantage of the symmetry properties of the complex Fourier
coefficients. Indeed, since we are representing real-valued functions, we know
that for every $n \in \NN$ there holds
\begin{equation}
  c_{1, 0}, c_{2, 0} \in \RR, \quad c_{1, - n} = \overline{c_{1, n}}, \quad
  c_{2, - n} = \overline{c_{2, n}} . \label{eq:symFourCoeff}
\end{equation}
After that, the energy $\GG (\uu_{\bot})$ can be represented under the form
\begin{align}
  \GG (\uu_{\bot}) & =  (| c_{1, 0} |^2 + | c_{2, 0} |^2) + \kappa^2 | c_{1,
  0} |^2 \nonumber\\
  &  \quad + 2 \sum_{n \geqslant 1} (n^2 + 1) (| c_{1, n} |^2 + | c_{2, n} |^2) +
  \kappa^2 | c_{1, n} |^2 - 4 n \Im [c_{1, n} \overline{c_{2, n}}] . 
\end{align}
We would like to interpret the previous expression as an energy functional on
$\dot{\ell}_2 (\ZZ_{\geq 0}, \RR^2 \times \RR^2)$. For that, we first observe that $\Im
[c_{1, n} \overline{c_{2, n}}] = J (\Re [c_{2, n}], \Im [c_{2, n}]) \cdot (\Re
[c_{1, n}], \Im [c_{1, n}])$ with the antisymmetric matrix
\begin{equation}
  J \assign \left(\begin{array}{cc}
    0 & - 1\\
    1 & 0
  \end{array}\right)
\end{equation}
representing the unitary image in matrix form, and then we set
\begin{equation}
  \tmmathbf{c}_{1, 0} = \left(\begin{array}{c}
    c_{1, 0}\\
    0
  \end{array}\right), \tmmathbf{c}_{2, 0} = \left(\begin{array}{c}
    c_{2, 0}\\
    0
  \end{array}\right) \label{eq:2simplify1},
\end{equation}
\begin{equation}
  \tmmathbf{c}_{1, n} = \sqrt{2} \left(\begin{array}{c}
    \Re [c_{1, n}]\\
    \Im [c_{1, n}]
  \end{array}\right), \tmmathbf{c}_{2, n} = \sqrt{2} \left(\begin{array}{c}
    \Re [c_{2, n}]\\
    \Im [c_{2, n}]
  \end{array}\right) \quad n \geqslant 1. \label{eq:2simplify2}
\end{equation}
In this way, the minimization problem
\eqref{eq:FtoFourier}-\eqref{eq:constforFour} for $\GG$ is equivalent to the
minimization (in the space of sequences $\dot{\ell}_2 (\ZZ_{\geq 0}, \RR^2 \times \RR^2)$) of the energy
functional
\begin{equation}
  \GG (\uu_{\bot}) = \sum_{n \geqslant 0} (n^2 + 1) (| \tmmathbf{c}_{1, n} |^2
  + | \tmmathbf{c}_{2, n} |^2) + \kappa^2 | \tmmathbf{c}_{1, n} |^2 - 4 n 
  J\tmmathbf{c}_{2, n} \cdot \tmmathbf{c}_{1, n} \label{eq:enc1c2new}
\end{equation}
with $\tmmathbf{c}_{1, 0} \cdot \tmmathbf{e}_2 =\tmmathbf{c}_{2, 0} \cdot
\tmmathbf{e}_2 = 0$, and subject to the constraint
\begin{equation}
  \sum_{n \geqslant 0} | \tmmathbf{c}_{1, n} |^2 + | \tmmathbf{c}_{2, n} |^2 =
  1. \label{eq:enc1c2newconstr}
\end{equation}
If $(\tmmathbf{c}_{1, n}, \tmmathbf{c}_{2, n})$ minimizes the energy
\eqref{eq:enc1c2new}-\eqref{eq:enc1c2newconstr}, then the Euler-Lagrange
equations associated with the minimization problem gives the existence of a
Lagrange multiplier $\lambda \in \RR$ such that for every $n \geqslant 0$
there holds
\begin{align}
  (n^2 + 1) \tmmathbf{c}_{1, n} + \kappa^2 \tmmathbf{c}_{1, n} - 2 n 
  J\tmmathbf{c}_{2, n} & =  \lambda \tmmathbf{c}_{1, n}, \\
  (n^2 + 1) \tmmathbf{c}_{2, n} + 2 n  J\tmmathbf{c}_{1, n} & = \lambda
  \tmmathbf{c}_{2, n} . 
\end{align}
from which one easily gets that $\lambda = \GG (\uu_{\bot})$. Therefore the
previous two relations can be rewritten under the form
\begin{align}
  \left( n^2 + 1 + \kappa^2 - \GG (\uu_{\bot}) \right) \tmmathbf{c}_{1, n} & =
   2 n  J\tmmathbf{c}_{2, n}  \label{eq:takemod1}\\
  \left( n^2 + 1 - \GG (\uu_{\bot}) \right) \tmmathbf{c}_{2, n} & = - 2 n 
  J\tmmathbf{c}_{1, n} .  \label{eq:takemod2}
\end{align}
Note that for $n = 0$, the first of the previous two relations gives $\left( 1
+ \kappa^2 - \GG (\uu_{\bot}) \right) c_{1, 0} = 0$ and, therefore, by
\eqref{eq:ubene} we have
\begin{equation}
  c_{1, 0} = 0. \label{eq:c1neq0}
\end{equation}
Also, observe that \eqref{eq:ubene} assures that for every $n \geqslant 0$
there holds
\[ \left( n^2 + 1 + \kappa^2 - \GG (\uu_{\bot}) \right) > 0, \qquad \left( n^2
   + 1 - \GG (\uu_{\bot}) \right) \geqslant 0, \]
with the second inequality being strict for every $n \geqslant 1$. Therefore, from
\eqref{eq:takemod1} and \eqref{eq:takemod2} we infer that for every $n
\geqslant 1$ there holds
\begin{align}
  \tmmathbf{c}_{1, n} & \overset{\eqref{eq:takemod1}}{=}  \frac{2 n}{\left(
  n^2 + 1 + \kappa^2 - \GG (\uu_{\bot}) \right)}  J\tmmathbf{c}_{2, n}
  \nonumber\\
  & \overset{\eqref{eq:takemod2}}{=}  \frac{4 n^2}{\left( n^2 + 1 - \GG
  (\uu_{\bot}) \right) \left( n^2 + 1 + \kappa^2 - \GG (\uu_{\bot}) \right)} 
  \tmmathbf{c}_{1, n} . \nonumber
\end{align}
Hence, if $\tmmathbf{c}_{1, n} \neq 0$ then necessarily $\left( n^2 + 1 - \GG
(\uu_{\bot}) \right) \left( n^2 + 1 + \kappa^2 - \GG (\uu_{\bot}) \right) = 4
n^2$ which, given the upper bound $\GG (\uu_{\bot}) \leqslant \kappa^2 / 2$ in
\eqref{eq:ubene}, admits the only possible solution
\[ \GG (\uu_{\bot}) = \frac{\kappa^2}{2} + n^2 + 1 - \frac{1}{2}
   \sqrt{\kappa^4 + 16 n^2} . \]
Also, the upper bound $\GG (\uu_{\bot}) \leqslant 1$ in \eqref{eq:ubene} gives
that $1 \leqslant n^2 \leqslant 4 - \kappa^2$. Therefore we deduce that we can
have $\tmmathbf{c}_{1, n} \neq 0$ only when $\kappa^2 \leqslant 3$ and, in
this latter case (i.e., if $\tmmathbf{c}_{1, n} \neq 0$ and $0 \neq \kappa^2
\leqslant 3$), one necessarily has $n^2 < 4$, i.e., $n \leqslant 1$.

Overall, we get that it is always the case (i.e., for every $\kappa^2
> 0$) that
\[ c_{1, 0} = 0 \quad \text{and} \quad \tmmathbf{c}_{1, n} =\tmmathbf{c}_{2,
   n} = 0 \quad \forall n \geqslant 2. \]
Moreover, if $\kappa^2 > 3$ then the last relation strengthen to
$\tmmathbf{c}_{1, n} =\tmmathbf{c}_{2, n} = 0$ for every $n \geqslant 1$. In
other words, the minimization problem
\eqref{eq:enc1c2new}-\eqref{eq:enc1c2newconstr} reduces to
a finite-dimensional minimization problem and it is convenient to investigate
the regimes $\kappa^2 > 3$, and $\kappa^2 \leqslant 3$ separately.

{\noindent}{\tmstrong{The regime $\kappa^2 > 3$.}} The problem
\eqref{eq:enc1c2new}-\eqref{eq:enc1c2newconstr} trivializes to
\begin{equation}
  \text{minimize } c_{2, 0}^2
\end{equation}
subject to $c_{2, 0}^2 = 1$. This immediately gives $u_1 (t) = 0$ and $u_2 (t)
\equiv \pm 1$ as minimizer of the energy \eqref{eq:FtoFourier} and, therefore
({\tmabbr{cf.}}~\eqref{eq:decompuperpj}), $\uu_{\bot} = \pm \tmmathbf{n}$ are
the only two global minimizers of the energy when $\kappa^2 > 3$. This proves
statement i. in Theorem~\ref{thm:PoincIneq}.

{\noindent}{\tmstrong{The regime $\kappa^2 \leqslant 3$.}} The problem
\eqref{eq:enc1c2new}-\eqref{eq:enc1c2newconstr} becomes
\begin{equation}
  \text{minimize } c_{2, 0}^2 + 2 (| \tmmathbf{c}_{1, 1} |^2 + |
  \tmmathbf{c}_{2, 1} |^2) + \kappa^2 | \tmmathbf{c}_{1, 1} |^2 - 4 
  J\tmmathbf{c}_{2, 1} \cdot \tmmathbf{c}_{1, 1} \label{eq:constrnewj0}
\end{equation}
subject to
\begin{equation}
  c_{2, 0}^2 + | \tmmathbf{c}_{1, 1} |^2 + | \tmmathbf{c}_{2, 1} |^2 = 1.
  \label{eq:constrnewj}
\end{equation}
We can incorporate part of the constraint into the energy functional and solve
for
\begin{equation}
  \text{minimize } 1 + | \tmmathbf{c}_{1, 1} |^2 + | \tmmathbf{c}_{2, 1} |^2 +
  \kappa^2 | \tmmathbf{c}_{1, 1} |^2 - 4  J\tmmathbf{c}_{2, 1} \cdot
  \tmmathbf{c}_{1, 1}  \label{eq:newminrelprob1}
\end{equation}
subject to the relaxed
\begin{equation}
  | \tmmathbf{c}_{1, 1} |^2 + | \tmmathbf{c}_{2, 1} |^2 \leqslant 1.
  \label{eq:newminrelprob2}
\end{equation}
In fact, if $\tmmathbf{c}_{1, 1}, \tmmathbf{c}_{2, 1} \in \RR^2$ solve
\eqref{eq:newminrelprob1}-\eqref{eq:newminrelprob2}, it is then sufficient to
set $c_{2, 0}^2 = 1 - | \tmmathbf{c}_{1, 1} |^2 + | \tmmathbf{c}_{2, 1} |^2$
to get a solution of \eqref{eq:constrnewj0}-\eqref{eq:constrnewj}. To solve
\eqref{eq:newminrelprob1}-\eqref{eq:newminrelprob2} we use polar coordinates
in the plane. We set
\begin{equation}
  \tmmathbf{c}_{1, 1} = \rho_1 (\cos \theta_1, \sin \theta_1), \quad
  \tmmathbf{c}_{2, 1} = \rho_2 (\cos \theta_2, \sin \theta_2),
  \label{eq:polarcoordsplanec1c2}
\end{equation}
We note that $J\tmmathbf{c}_{2, 1} \cdot \tmmathbf{c}_{1, 1} = \rho_1 \rho_2
\sin (\theta_1 - \theta_2)$ and we reformulate the minimization problem as
\begin{equation}
  \text{minimize } 1 + \rho_1^2 + \rho_2^2 + \kappa^2 \rho_1^2 - 4  \rho_1
  \rho_2 \sin (\theta_1 - \theta_2)
\end{equation}
subject to
\begin{equation}
  \rho_1, \rho_2 \geqslant 0 \quad \text{and} \quad \rho_1^2 + \rho_2^2
  \leqslant 1. \label{eq:constrnew1}
\end{equation}
We note that $\theta_1, \theta_2$ do not play any role in the constraint
\eqref{eq:constrnew1}, therefore at a minimum point one must have
\begin{equation}
  \theta_1 - \theta_2 = \pi / 2 \quad \text{mod } 2 \pi . \label{eq:th1th2mod}
\end{equation}
It remains to minimize
\begin{equation}
  \text{minimize } g (\rho_1, \rho_2) \assign 1 + \rho_1^2 + \rho_2^2 +
  \kappa^2 \rho_1^2 - 4  \rho_1 \rho_2 \label{eq:grho1rho2}
\end{equation}
subject to the constraint $\rho_1, \rho_2 \geqslant 0$ and $\rho_1^2 +
\rho_2^2 \leqslant 1$.

{\noindent}{\emph{The regime $\kappa^2 = 3$.}} One can easily check that when
$\kappa^2 = 3$ the function $g$ reduces to $g (\rho_1, \rho_2) = 1 + (2 \rho_1
- \rho_2)^2$. Therefore the energy is minimized when $\rho_2 = 2 \rho_1$ and
$\rho_1^2 \leqslant 1 / 5$. But then, from \eqref{eq:polarcoordsplanec1c2} and
\eqref{eq:th1th2mod} we infer that
\begin{equation}
  \tmmathbf{c}_{1, 1} = \rho_1 (\cos \theta_1, \sin \theta_1), \quad
  \tmmathbf{c}_{2, 1} = 2 \rho_1 (\cos \theta_2, \sin \theta_2), \quad
  \theta_1 - \theta_2 = \pi / 2,
\end{equation}
which in turn, given \eqref{eq:2simplify1} and \eqref{eq:2simplify2},
translate into
\begin{equation}
  c_{2, 0}^2 = 1 - 5 \rho_1^2, \quad c_{1, 1} = \frac{\sqrt{2}}{2} \rho_1 e^{i
  \theta}, \quad c_{2, 1} = \sqrt{2} \rho_1 e^{i (\theta - \pi / 2)}
\end{equation}
for an arbitrary $\theta \in \RR$. It follows that $u_1 (t) = c_{1, 1} e^{it}
+ \overline{c_{1, 1}} e^{- it} = \sqrt{2} \rho_1 \cos (\theta + t)$ and $u_2
(t) = c_{2, 0} + c_{2, 1} e^{it} + \overline{c_{2, 1}} e^{- it} = \pm \sqrt{1
- 5 \rho_1^2} + 2 \sqrt{2} \rho_1 \sin (\theta + t)$. The corresponding family
of minimizers reads as ({\tmabbr{cf.}}~\eqref{eq:decompuperpj})
\[ \uu_{\bot} (t) = \sqrt{2} \rho_1 \cos (\theta + t) \tmmathbf{\tau} (t) +
   \left[ \pm \sqrt{1 - 5 \rho_1^2} + 2 \sqrt{2} \rho_1 \sin (\theta + t)
   \right] \n (t) \]
for arbitrary $\theta \in [- \pi, \pi]$ and $0 \leqslant \rho_1 \leqslant 1 /
\sqrt{5}$. The common value of the energy is $\GG (\uu_{\bot}) = 1$. This
proves statement ii. in Theorem~\ref{thm:PoincIneq}.

{\noindent}{\emph{The regime $\kappa^2 < 3$.}} To study the minimization
problem \eqref{eq:grho1rho2} in the regime $\kappa^2 < 3$ we rely on polar
coordinates in the plane. We set $(\rho_1, \rho_2) = r \sigma$ with $0
\leqslant r \leqslant 1$, $\sigma = (\cos \phi, \sin \phi) \in \Stwo^1$, $\phi
\in [0, \pi / 2]$ and minimize in $\phi$ the function
\begin{equation}
  g (r, \phi) \assign 1 + r^2 (1 + \kappa^2 \cos^2 \phi - 2 \sin 2 \phi) .
\end{equation}
For that we note that when $\kappa^2 < 3$ the quantity $1 + \kappa^2 \cos^2
\phi - 2 \sin 2 \phi$ is strictly negative for a range of angles included in
$[0, \pi / 2]$. Indeed, $1 + \kappa^2 \cos^2 \phi - 2 \sin 2 \phi$ is strictly
negative when
\[ \arctan \left( 2 - \sqrt{3 - \kappa^2} \right) < \phi < \arctan \left( 2 +
   \sqrt{3 - \kappa^2} \right) \]
that is, when $\phi \in [\pi / 12, 5 \pi / 12] \subseteq [0, \pi / 2]$.
Therefore, when $\kappa^2 < 3$, in order to minimize $g (r, \phi)$ we want to
take maximal $r^2 = 1$. The energy minimization problem then further reduces
to a parametric problem in the single variable $\phi$:
\begin{equation}
  g (\phi) = \kappa^2 \cos^2 \phi + 2 - 2 \sin (2 \phi) . \label{eq:gsimpl6}
\end{equation}
For what follows, it is convenient to set $\omega_{\kappa}^2 \assign
\sqrt{\kappa^4 + 16}$. The first-order minimality conditions can be written
under the form
\begin{equation}
  \frac{\kappa^2}{\omega_{\kappa}^2} \sin (2 \phi) +
  \frac{4}{\omega_{\kappa}^2} \cos (2 \phi) = 0.  \label{eq:fomincond}
\end{equation}
It is then clear that for every $\kappa^2 > 0$ there exists a unique angle in
$\phi_{\kappa} \in [0, \pi / 2]$ such that
\begin{equation}
  \cos (2 \phi_{\kappa}) = \kappa^2 / \omega_{\kappa}^2, \qquad \sin (2
  \phi_{\kappa}) = 4 / \omega_{\kappa}^2 . \label{eq:eqsforphik}
\end{equation}
The observation allows us to rewrite the first-order minimality condition
\eqref{eq:fomincond} under the form $\sin (2 (\phi + \phi_{\kappa})) = 0$.
Thus, given $0 < \kappa^2 < 3$, once computed $\phi_{\kappa}$, the minimal
energy is achieved at $\phi = - \phi_{\kappa} + \pi / 2$ and $r = 1$ (recall
that $\phi, \phi_{\kappa} \in [0, \pi / 2]$). This leads to $\rho_1 = \cos (-
\phi_{\kappa} + \pi / 2) = \sin \phi_{\kappa}$, $\rho_2 = \sin (-
\phi_{\kappa} + \pi / 2) = \cos \phi_{\kappa}$ with $\phi_{\kappa} \in [0, \pi
/ 2]$ being the unique solution of \eqref{eq:eqsforphik}. But this means that
the minimal coefficients $c_{2, 0}$, $\tmmathbf{c}_{1, 1}$, and
$\tmmathbf{c}_{2, 1}$ are given by
({\tmabbr{cf.}}~\eqref{eq:polarcoordsplanec1c2})
\[ \tmmathbf{c}_{1, 1} = (\sin \phi_{\kappa}) \sigma, \quad \tmmathbf{c}_{2,
   1} = - (\cos \phi_{\kappa}) J \sigma, \quad c_{2, 0} = 0 \]
for arbitrary $\sigma \in \Stwo^1$. The corresponding minimal energy is given
by (cf.~\eqref{eq:eqsforphik})
\begin{align}
  2 (| \tmmathbf{c}_{1, 1} |^2 + | \tmmathbf{c}_{2, 1} |^2) + \kappa^2 |
  \tmmathbf{c}_{1, 1} |^2 - 4  J\tmmathbf{c}_{2, 1} \cdot \tmmathbf{c}_{1, 1}
  & =  2 + \frac{1}{2} \kappa^2  (1 - \cos (2 \phi_{\kappa})) - 2 \sin (2
  \phi_{\kappa}) \nonumber\\
  & =  \frac{1}{2}  (\kappa^2 -
  \omega_{\kappa}^2 + 4) . \nonumber
\end{align}
Going back to the original configuration we have
({\tmabbr{cf.}}~\eqref{eq:2simplify1}-\eqref{eq:2simplify2})
\[ c_{1, 1} = \frac{\sqrt{2}}{2} (\sin \phi_{\kappa}) e^{i \theta}, \quad
   c_{2, 1} = - i \frac{\sqrt{2}}{2} (\cos \phi_{\kappa}) e^{i \theta} \]
for an arbitrary $\theta \in \RR$ and, therefore, $u_1 (t) = c_{1, 1} e^{it} +
\overline{c_{1, 1}} e^{- it} = \sqrt{2} (\sin \phi_{\kappa}) \cos (\theta +
t)$ and $u_2 (t) = c_{2, 1} e^{it} + \overline{c_{2, 1}} e^{- it} = \sqrt{2}
(\cos \phi_{\kappa}) \sin (\theta + t)$. The corresponding family of
minimizers reads as ({\tmabbr{cf.}}~\eqref{eq:decompuperpj})
\[ \uu_{\bot} (t) = \sqrt{2} (\sin \phi_{\kappa}) \cos (\theta + t)
   \tmmathbf{\tau} (t) + \sqrt{2} (\cos \phi_{\kappa}) \sin (\theta + t) \n
   (t) \]
with $\phi_{\kappa} \in [0, \pi / 2]$ given by $\phi_{\kappa} \assign
\frac{1}{2} \arctan (4 / \kappa^2)$ and $\theta \in [- \pi, \pi]$ arbitary.
The common value of the energy is $\GG (\uu_{\bot}) = \frac{1}{2}  (\kappa^2 -
\omega_{\kappa}^2 + 4)$. This proves statement iii. in
Theorem~\ref{thm:PoincIneq}.

Finally, we note that for $\kappa^2 < 3$
\[ | \uu_{\bot} (t) |^2 = 2 \sin^2 (\phi_{\kappa}) \cos^2 (t + \theta) + 2
   \cos^2 (\phi_{\kappa}) \sin^2 (t + \theta) \]
and, therefore, $\uu_{\bot}$ is $\Stwo^2$ valued only when $\cos (2
\phi_{\kappa}) = 0$. But this never happens because from \eqref{eq:eqsforphik}
we know that $\cos (2 \phi_{\kappa}) = \kappa^2 / \omega_{\kappa}^2$.
Therefore, when $0 < \kappa^2 < 3$, there are no minimizers of $\GG$ in
problem \eqref{eq:FscriptforPoinc}-\eqref{eq:contru} that are
$\Stwo^2$-valued. This gives \eqref{eq:eqsforphikthm} and concludes the proof
of the theorem.
\end{proof}

\section{The stability of in-plane configurations}\label{sec:numericalinplane}

This section is devoted to the analysis of in-plane minimizers of the energy
\eqref{eq:micromagenfunonCccoordgen}. The interest in such configurations is
motivated by numerical simulations. Indeed, numerical schemes for the analysis
of ground states of $\mathcal{F}$ seem to converge towards solutions that are
in-plane. The phenomenon, enforced by Theorem \ref{thm:BFTCylinder} when
$\kappa^2 \geqslant 3$, and partially endorsed by Lemma~\ref{lemma:e3eq0} for
$\kappa^2 < 3$, motivates the following conjecture.

{\noindent}{\tmstrong{Conjecture $(C)$. }}\emph{For every $\kappa^2 > 0$
the minimizers in $H^1_{\sharp} ([- \pi, \pi], \Stwo^2)$ the energy functional
{\opt}{\tmabbr{{\emph{cf}}.}}~{\emph{\eqref{eq:reden}}}{\cpt}
\begin{equation}
  \mathcal{F} (\uu) \assign \int_{- \pi}^{\pi} | \partial_t{\uu}
  (t) |^2 \mathd t \; + \; \kappa^2 \int_{- \pi}^{\pi} | \uu (t)
  \times \n (t) |^2 \mathd t \label{eq:redennewloccoord}
\end{equation}
are in-plane. In other words, if $\uu \in H^1_{\sharp} ([- \pi, \pi],
\Stwo^2)$ minimizes {\emph{\eqref{eq:redennewloccoord}}} then $\uu
\cdot \tmmathbf{e}_3 \equiv 0$ in $[- \pi, \pi]$.}{\medskip}

{\noindent}Theorem \ref{thm:BFTCylinder} assures that conjecture $(C)$ is true
when $\kappa^2 \geqslant 3$, as $\pm \n$ are the only global
minimizers of $\mathcal{F}$. When $\kappa^2 < 3$, the answer remains open.
Indeed, while it is simple to prove that all minimizers are $\Stwo^1$-valued
when, as in Lemma~\ref{lemma:e3eq0}, the $\Stwo^2$-valued constraint is
relaxed to the energy constraint
\begin{equation}
  \| \uu \|^2_{L^2_{\sharp} ( [- \pi, \pi], \RR^3 )} = 2
  \pi,
\end{equation}
the situation seems more involved for $\Stwo^2$-valued configurations.

Let us comment a little bit more about some common aspects of the numerical
schemes available to compute energy-minimizing maps. We focus on the iteration
scheme introduced by Alouges in {\cite{MR1472192}} for computing stable
$\Stwo^2$-valued harmonic maps on bounded domains of $\RR^3$, but our
observations transfer to other numerical schemes, e.g., the dissipative flow
governed by the Landau--Lifshitz--Gilbert equation (see, e.g.,
{\cite{alouges1992global,Alouges_2008,di2017linear}}). The algorithm proposed
in {\cite{MR1472192}} has the advantage of operating at a continuous level;
this allows us to use it as a versatile theoretical tool to obtain the
existence of solutions with prescribed properties. Starting from an initial
guess $\m_0 \in H^1_{\sharp} ([- \pi, \pi], \Stwo^2)$, the scheme
builds a sequence $(\m_j)_{j \in \NN}$ of energy-decreasing
$\Stwo^2$-valued configurations which eventually converges, weakly in
$H^1_{\sharp} ([- \pi, \pi], \Stwo^2)$, to a critical point $\m_{\infty}$ of
the energy $\mathcal{F}$. The algorithm preserves specific structural
properties of the initial guess $\m_0$. While structure-preserving
features are most often a strength of numerical schemes, other times they
represent their biggest weakness. For example, as we are going to show, the
algorithm retains the following properties of the initial guess
{\cite{DiFratta2021}}:
\begin{enumerate}
  \item[\emph{i.}] If the initial guess $\m_0$ is axially symmetric
  (with respect to the $z$-axis), so are the elements of the sequence
  $(\m_j)_{j \in \NN}$ produced by the iterative scheme and the weak
  limit $\m_{\infty}$.
  
  \item[\emph{ii.}] If the initial guess $\m_0$ is in-plane, then all
  the elements of the sequence $(\m_j)_{j \in \NN}$ produced by the
  iterative scheme are in-plane, as well as the weak limit
  $\m_{\infty}$. Moreover, if the initial guess $\m_0$ is
  in-plane and in a prescribed homotopy class, so is the weak limit
  $\m_{\infty}$ of $(\m_j)_{j \in \NN}$.
\end{enumerate}
Point {\emph{ii}} tells us that regardless of whether conjecture $(C)$ is true
or false, in-plane configurations appear in simulations and therefore are of
interest. For this reason, the second half of the section focuses on the
characterization of the in-plane critical points of the energy functional
$\mathcal{F}$ (see \eqref{eq:redennewloccoord}) and the analysis of their
minimality properties.

In order to state the main result of this section, we need to introduce some
notation. In what follows, as before, we denote by $\n (t) = (\cos
t, \sin t, 0)$ and $\tmmathbf{\tau} (t) \assign \partial_t \n (t)$ the normal
and tangential fields to $\Stwo^1 \times \{ 0 \}$. Also, for any $\kappa^2 >
0$ we denote by $\alpha_{\kappa} > 0$ the {\emph{unique}} solution of the
equation
\begin{equation}
  \frac{1}{2 \pi} \int_{- \pi}^{\pi} \frac{1}{\sqrt{\alpha^2_{\kappa} +
  \kappa^2 \sin^2 x}} \mathd x = 1. \label{eq:uniqsolalpha}
\end{equation}
The uniqueness of the solution of \eqref{eq:uniqsolalpha} comes from the fact
that for every $\kappa^2 > 0$ the continuous function
\begin{equation}
  \alpha \in \RR_+ \mapsto \frac{1}{2 \pi} \int_{- \pi}^{\pi}
  \frac{1}{\sqrt{\alpha^2 + \kappa^2 \sin^2 x}} \mathd x
\end{equation}
is decreasing in $\alpha$, diverges to $+ \infty$ when $\alpha \rightarrow 0$,
and converges to $0$ when $\alpha \rightarrow + \infty$.

After that, given $\alpha_{\kappa}$, we denote by $F_{\kappa}$ the elliptic
integral of the first kind defined for any $\theta \in \RR$ by
\begin{equation}
  F_{\kappa} (\theta) \assign \int_{- \pi}^{\theta} \frac{1}{\sqrt{1 +
  (\kappa^2 / \alpha^2_{\kappa}) \sin^2 x}} \mathd x.  \label{eq:EllipticErev}
\end{equation}
and  we denote its inverse function, which is usually referred to as the
Jacobi amplitude function, as $\tmop{am}_{\kappa} \assign F^{- 1}_{\kappa}$.
Finally, we define $E_\kappa$ to be the complete elliptic integral
\begin{equation}
  E_{\kappa} \assign \int_{- \pi}^{\pi}  \sqrt{1 + (\kappa^2 /
  \alpha^2_{\kappa}) \sin^2 x} \mathd x. \label{eq:EllipticE}
\end{equation}
It follows from \eqref{eq:uniqsolalpha} that $F_{\kappa} (\theta + 2 \pi) = F_{\kappa} (\theta) + 2 \pi \alpha_{\kappa}$.
Hence, the inverse function $\tmop{am}_{\kappa}$ satisfies the identity $\tmop{am}_{\kappa} (y + 2
\pi \alpha_{\kappa}) = \tmop{am}_{\kappa} (y) + 2 \pi$ which, in particular, assures that the minimizers identified in the next Theorem~\ref{thm:mainS1S1} (cf.~\eqref{eq:mminmdegzero}-\eqref{eq:minmdegzero}) are $2 \pi$-periodic.
\begin{theorem}[\tmname{in-plane minimizers}]
  \label{thm:mainS1S1}Let $\kappa^2 > 0$. {\emph{\begin{figure}[t]
    {\includegraphics[width=\textwidth]{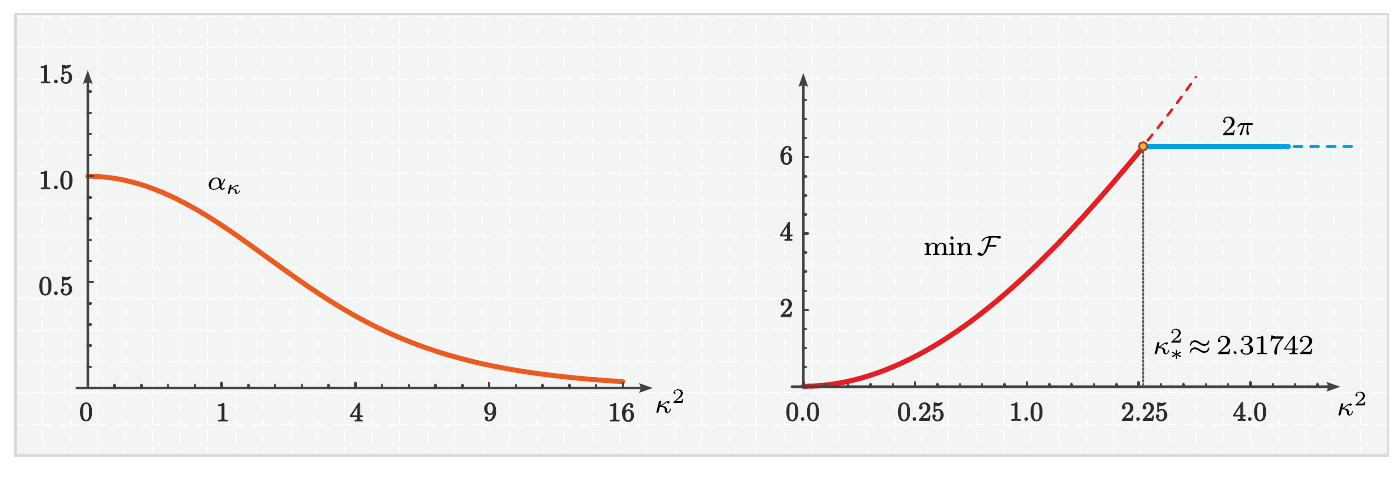}}
    \caption{\label{Fig:optimalEn}There is a critical value $\kappa_{\ast}^2$
    of the anisotropy parameter, $\kappa_{\ast}^2 \approx 2.31742$, below
    which the global minimizers of \eqref{eq:redennewloccoord} have degree
    zero, and above which the only two global minimizers are the normal vector
    fields $\pm \n$ (and have degree one).
    }
  \end{figure}}}If $\m_{\bot}$ is a {\opt}global{\cpt} minimizer in
  $H^1_{\sharp} ([- \pi, \pi], \Stwo^1)$ of the energy functional
  $\mathcal{F}$
  {\opt}{\tmabbr{{\emph{cf}}.}}~{\emph{\eqref{eq:redennewloccoord}}}{\cpt},
  then either $\jdeg  \m_{\bot} = 0$ or $\jdeg  \m_{\bot} = 1$. Precisely,
  there exists a threshold value $\kappa^2_{\ast}$ of the anisotropy parameter
  such that the following dichotomy holds:
  \begin{enumerate}
    \item[i.] If $\kappa^2 > \kappa_{\ast}^2$, then any global minimizer has
    degree one. Moreover, for every $\kappa^2 > 0$, the normal fields $\pm
    \n$ are the only two global minimizers of $\mathcal{F}$ in the
    homotopy class $\left\{ \jdeg  \m_{\bot} = 1 \right\}$.
    
    \item[ii.] If $\kappa^2 < \kappa_{\ast}^2$, then any global minimizer has
    degree zero. Also, for any $\kappa^2 > 0$, the minimizers of
    $\mathcal{F}$ in the homotopy class $\left\{ \jdeg  \m_{\bot} \right\} =
    0$ are given by
    {\opt}{\emph{{\tmabbr{cf.}}~Figure}}~{\emph{\ref{fig:kdegzero}}}{\cpt}
    \begin{equation} \label{eq:mminmdegzero}
      \m_{\bot} (t) = \sin \theta (t) \tmmathbf{\tau} (t) + \cos \theta (t) \n
      (t)
    \end{equation} 
    with $\theta$ \opt a strictly decreasing function\cpt\ being any element of the family
    \begin{equation}
      \theta (t) = \tmop{am}_{\kappa} (- \alpha_\kappa t + b), \quad b \in \RR\, ,
      \label{eq:minmdegzero}
    \end{equation}
    and $\alpha_\kappa>0$ the unique solution of \eqref{eq:uniqsolalpha}. Moreover, the minimal value of the energy is given by
    {\opt}{\emph{{\tmabbr{cf.}}~Figure}}~{\emph{\ref{Fig:optimalEn}}}{\cpt}
    \begin{equation}
      \mathcal{F} \left( \m_{\bot} \right) = - 2 \pi (1 + \alpha_{\kappa}^2) +
      2 \alpha_{\kappa} E_{\kappa} . \label{eq:minendegzero}
    \end{equation}
    \item[iii.] The exact value of $\kappa^2_{\ast}$ is determined as the solution
    of the equation
    \begin{equation}
      -2\pi\alpha_{\kappa}^2 + 2 \alpha_{\kappa} E_{\kappa} = 4 \pi,
    \end{equation}
    which gives $\kappa^2_{\ast} \approx 2.31742$.
    
    \item[iv.] For any $\kappa^2 > 0$, the degree-zero solutions
    {\emph{\eqref{eq:minmdegzero}}} are locally stable critical points of the
    energy $\mathcal{F}$ in {\emph{\eqref{eq:redennewloccoord}}}. Also, for
    any $\kappa^2 > 0$, degree-one solutions $\pm \n$ are local minimizers of
    the energy $\mathcal{F}$.
  \end{enumerate}
  Combining i and ii we get the following characterization of the energy
  landscape. The normal vector fields $\pm \n$ are the only two
  global minimizers of $\mathcal{F}$ when $\kappa^2 > \kappa_{\ast}^2$ and the
  common minimum value of the energy is $2 \pi$. When $\kappa^2 <
  \kappa_{\ast}^2$ the minimal energy depends on $\kappa$, it is given by
  {\emph{\eqref{eq:minendegzero}}}, and is reached when $\theta$ is given by
  {\emph{\eqref{eq:minmdegzero}}}. Finally, when $\kappa^2=\kappa_\ast^2$ both the degree one solutions $\pm\n$ and the degree zero solutions \eqref{eq:mminmdegzero}-\eqref{eq:minmdegzero} coexist as energy minimizers and the common value of the energy is $2\pi$~{\opt}{\emph{{\tmabbr{cf.}}~Figure}}~{\emph{\ref{Fig:optimalEn}}}{\cpt}.
\end{theorem}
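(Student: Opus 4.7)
The strategy is to exploit the $\Stwo^1$-valued constraint via a scalar lifting. Writing $\m_{\bot}(t) = (\cos\psi(t), \sin\psi(t), 0)$ with $\psi(t + 2\pi) = \psi(t) + 2\pi d$ and $d \assign \jdeg\m_{\bot} \in \ZZ$, and setting $\theta \assign \psi - t$ so that $\m_{\bot} = \sin\theta \, \tmmathbf{\tau} + \cos\theta \, \n$, direct computation yields $|\partial_t \m_{\bot}|^2 = (1 + \theta')^2$ and $|\m_{\bot} \times \n|^2 = \sin^2\theta$. Since $\int_{-\pi}^{\pi}\theta'\,dt = 2\pi(d-1)$, expansion of the square gives
\[
\mathcal{F}(\m_{\bot}) = (4d-2)\pi + \int_{-\pi}^{\pi}\bigl[(\theta'(t))^2 + \kappa^2 \sin^2\theta(t)\bigr]\,dt.
\]
Cauchy--Schwarz yields $\int(\theta')^2\,dt \geq 2\pi(d-1)^2$, hence $\mathcal{F}(\m_{\bot}) \geq 2\pi d^2$. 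For $d = 1$ the equality case forces $\theta \equiv 0$ or $\theta \equiv \pi$, yielding the two minimizers $\pm\n$ in the degree-one class with common energy $2\pi$; for $d = -1$ the inequality is strict because the equality case would require the incompatible pair $\theta' \equiv -2$ and $\sin^2\theta \equiv 0$; and for $|d| \geq 2$ one has $\mathcal{F} \geq 8\pi$. Hence any global minimizer necessarily has $d \in \{0, 1\}$, which proves the dichotomy and item i.

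The degree-zero class reduces to a pendulum-type variational problem with boundary condition $\theta(\pi) - \theta(-\pi) = -2\pi$. Critical points solve the Euler--Lagrange equation $\theta'' = (\kappa^2/2)\sin(2\theta)$, which admits the first integral $(\theta')^2 = \alpha^2 + \kappa^2\sin^2\theta$ with $\alpha^2 > 0$: librating and separatrix orbits are incompatible with the prescribed $-2\pi$ net rotation, forcing $\theta$ to be strictly monotone. Taking $\theta' < 0$, separation of variables $dt = -d\theta/\sqrt{\alpha^2 + \kappa^2\sin^2\theta}$ integrates, via the definition \eqref{eq:EllipticErev} of $F_{\kappa}$, to the Jacobi amplitude form \eqref{eq:minmdegzero}; the period condition on $t$, together with the $\pi$-periodicity of $\sin^2$, becomes precisely \eqref{eq:uniqsolalpha}, which determines $\alpha_\kappa$ uniquely since $\alpha \mapsto (2\pi)^{-1}\int(\alpha^2+\kappa^2\sin^2 x)^{-1/2}dx$ is strictly decreasing. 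The same change of variable evaluates the action as $\int(\theta')^2\,dt = \alpha_\kappa E_\kappa$ and $\int\kappa^2\sin^2\theta\,dt = \alpha_\kappa E_\kappa - 2\pi\alpha_\kappa^2$, yielding \eqref{eq:minendegzero}.

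To confirm uniqueness and local minimality in the degree-zero class, I would employ a Maupertuis-type reparametrization: for a monotone decreasing $\theta$, setting $s = -\theta$ and $f(s) \assign 1/|\theta'(t(s))|$ recasts the action as
\[
\int_0^{2\pi}\bigl[f(s)^{-1} + \kappa^2 f(s)\sin^2 s\bigr]\,ds \quad \text{subject to } \int_0^{2\pi} f\,ds = 2\pi, \; f > 0,
\]
a strictly convex minimization whose unique Lagrange-multiplier solution reproduces the pendulum solution above. Comparing the resulting degree-zero minimum $\mathcal{F}_0 = -2\pi(1+\alpha_\kappa^2) + 2\alpha_\kappa E_\kappa$ with the degree-one minimum $\mathcal{F}_1 = 2\pi$ yields the threshold equation $-2\pi\alpha_\kappa^2 + 2\alpha_\kappa E_\kappa = 4\pi$; since $\mathcal{F}_0$ is continuous and strictly monotone in $\kappa^2$ (vanishing at $\kappa = 0$ and diverging as $\kappa\to\infty$), the crossing value $\kappa_\ast^2$ is unique, and a numerical root search gives $\kappa_\ast^2 \approx 2.31742$. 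Local minimality of $\pm\n$ for every $\kappa^2 > 0$ follows at once from the identity $\mathcal{F}(\m_{\bot}) - 2\pi = \int[(\theta')^2 + \kappa^2\sin^2\theta]\,dt \geq 0$ inside the degree-one class, while local minimality of the degree-zero critical points is inherited from the strict convexity of the Maupertuis functional.

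The main obstacle I anticipate is rigorously ruling out non-monotone $\theta$ in the degree-zero class, since the Maupertuis substitution applies cleanly only to monotone profiles. This should be handled either by a Floquet analysis of the Jacobi operator $-\partial_t^2 + \kappa^2\cos(2\theta_\ast)$ acting on periodic perturbations (whose kernel is spanned by $\theta_\ast'$, so coercivity holds transverse to the phase symmetry), or by a monotonization/rearrangement argument showing that folds can only increase the action; care is needed because neither $\int(\theta')^2$ nor $\int\sin^2\theta$ is individually monotone under standard decreasing rearrangement, so the argument must be tailored to the pendulum structure.
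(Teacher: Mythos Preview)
Your approach for the degree restriction, item~i, and items~ii--iii is essentially the same as the paper's: the scalar lifting $\m_{\bot}=\sin\theta\,\tmmathbf{\tau}+\cos\theta\,\n$, the pendulum Euler--Lagrange equation with its first integral, the phase-portrait argument that any solution meeting the boundary condition $\theta(\pi)-\theta(-\pi)=-2\pi$ must lie outside the separatrix (hence is strictly monotone), separation of variables to the Jacobi amplitude, and the period condition \eqref{eq:uniqsolalpha} fixing $\alpha_\kappa$. Your evaluation of the minimal energy and the threshold equation match the paper as well; your observation that $\kappa^2\mapsto\mathcal{F}_0(\kappa)$ is strictly increasing (as the minimum of a pointwise increasing family over a fixed constraint class) is the monotonicity that makes $\kappa_\ast^2$ unique.

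The one substantive divergence is item~iv. Your obstacle is slightly misdiagnosed: ruling out non-monotone \emph{global minimizers} is already accomplished by the phase-portrait argument (minimizers are critical points, and critical points with the prescribed winding are monotone), so the Maupertuis reduction is not needed for the characterization in~ii. Where you genuinely need more is local stability of the degree-zero solutions against \emph{arbitrary} periodic perturbations $\phi$, and there your convex-reparametrization argument indeed only covers monotone competitors. The paper closes this gap with the Hardy decomposition trick: since $\partial_t\theta$ is bounded away from zero, every $\phi\in H^1_\sharp$ can be written as $\phi=(\partial_t\theta)\psi$, and substituting into the second variation $\mathcal{F}''(\theta)(\phi)=\int|\partial_t\phi|^2+\kappa^2\phi^2\cos 2\theta\,dt$, integrating by parts, and using the differentiated Euler--Lagrange equation $\partial_{ttt}\theta=\kappa^2(\cos 2\theta)\partial_t\theta$ collapses everything to $\int|\partial_t\theta|^2|\partial_t\psi|^2\,dt\geq 0$. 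This is precisely the ``Floquet/Jacobi'' route you mention as an alternative, executed in one line. Your argument for local minimality of $\pm\n$ (global minimality within the degree-one class, which is an $H^1$-open set) is actually cleaner than the paper's second-variation computation.
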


\begin{remark}
  Note that, while in the $\Stwo^1$-valued setting the normal vector fields
  $\pm \n$ are local minimizers for every $\kappa^2 > 0$, this was
  not the case in the $\Stwo^2$-valued setting reported in
  Theorem~\ref{thm:BFTCylinder} (where $\pm \n$ become unstable for
  $\kappa^2 < 1$). The precise reason is that here we are restricted to the
  class of in-plane perturbations, whereas in the $\Stwo^2$-valued case the
  loss of stability is caused by perturbations in the $\e_3$ direction.
\end{remark}

\begin{proof}
  As in the proof of Theorem~\ref{thm:PoincIneq}, it is convenient to set
  $\m_{\bot} = m_1 \tmmathbf{\tau}+ m_2 \n$ with $m_1 \assign
  \m_{\bot} \cdot \tmmathbf{\tau}$ and $m_2 \assign
  \m_{\bot} \cdot \n$, where $\n (t) = (\cos t,
  \sin t, 0)$ and $\tmmathbf{\tau} (t) \assign \partial_t \n (t)$ are the
  normal and tangential fields to $\Stwo^1 \times \{ 0 \}$. Note that both
  $m_1$ and $m_2$ are in $H^1_{\sharp} ( [- \pi, \pi], \RR)$. In
  the moving frame $(\tmmathbf{\tau}, \n)$, the energy
  \eqref{eq:redennewloccoord} assumes the expression
  ({\tmabbr{cf.}}~\eqref{eq:FtoFourier})
  \begin{equation}
    \mathcal{F} (\m_{\bot}) = \int_{- \pi}^{\pi} | \partial_t m_2 -
    m_1 |^2 + | \partial_t m_1 + m_2 |^2 \mathd t + \kappa^2  \int_{-
    \pi}^{\pi} m_1^2  \hspace{0.17em} \mathd t. \label{eq:enmovframVal}
  \end{equation}
  Clearly, the vector field $(m_1, m_2)$ is $\Stwo^1$-valued. We lift it by
  setting
  \begin{equation}
    m_1 (t) \assign \sin \theta (t), \quad m_2 (t) \assign \cos \theta (t) . \label{eq:liftSob}
  \end{equation}
  After that, the energy \eqref{eq:enmovframVal} reads as
  \begin{align}
    \mathcal{F} (\m_{\bot}) =\; & \int_{- \pi}^{\pi} | \partial_t
    \theta (t) + 1 |^2 \mathd t + \kappa^2 \int_{- \pi}^{\pi} \sin^2 \theta
    (t) \mathd t  \label{eq:enmovframVal22}\\
    =\; & \int_{- \pi}^{\pi} | \partial_t \theta (t) |^2 + \kappa^2 \sin^2
    \theta (t) \mathd t + 2 \pi + 2 (\theta (\pi) - \theta (- \pi)) . 
  \end{align}
  \begin{figure}[t]
    {\includegraphics[width=\textwidth]{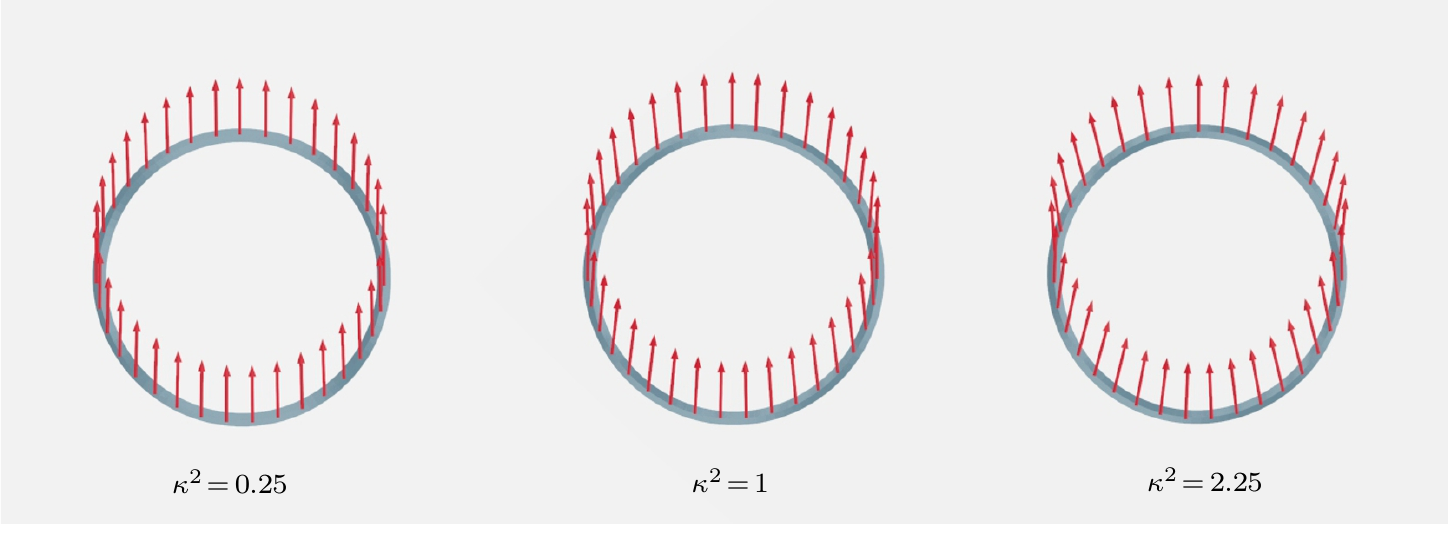}}
    \caption{\label{fig:kdegzero}A plot of the vector fields minimizing the
    energy \eqref{eq:redennewloccoord}. There is a critical value
    $\kappa_{\ast}^2$ of the anisotropy parameter, $\kappa_{\ast}^2 \approx
    2.31742$, below which the global minimizers of \eqref{eq:redennewloccoord}
    have degree zero, and above which the only two global minimizers are the
    normal vector fields $\pm \n$ (and have degree one). From left
    to right, we plot the minimizers for $\kappa^2 = 0.25$, $\kappa^2 = 1$,
    and $\kappa^2 = 2.25$.
    }
  \end{figure}It is clear that since $m_1, m_2 \in H^1_{\sharp} ( [- \pi,
  \pi], \RR)$ we necessarily have
  \begin{equation}
    \theta (\pi) - \theta (- \pi) = 2 \pi j \label{eq:defj}
  \end{equation}
  for some $j \in \ZZ$. Hence, the energy functional $\mathcal{F}$ takes the
  form
  \begin{equation}
    \mathcal{F} (\m_{\bot}) = \int_{- \pi}^{\pi} | \partial_t
    \theta (t) |^2 + \kappa^2 \sin^2 \theta (t) \mathd t + 2 \pi (1 + 2 j)
    \label{eq:valtemp1}
  \end{equation}
  The integer $j \in \ZZ$ is nothing but the degree of the $\Stwo^1$-valued
  map $(m_1, m_2)$ whose components are the coordinates of
  $\m_{\bot}$ in the moving frame $(\tmmathbf{\tau}, \n)$.
  Therefore, from \eqref{eq:defj} we get that
  \begin{equation}
    \jdeg \m_{\bot} = j + 1. \label{eq:degreemperp}
  \end{equation}
  In fact, the previous relation can also be obtained from \eqref{eq:defj} observing that $\m_{\bot}=(\cos (\theta (t) + t), \sin (\theta (t) + t), 0)$.

  The expression \eqref{eq:valtemp1} can be used to investigate the critical
  points of $\mathcal{F}$ in any prescribed homotopy class $j \in \ZZ$. Here,
  however, we are interested in global minimizers and, as we are going to
  show, this restricts the admissible homotopy classes to only two cases.
  
  First, we use \eqref{eq:defj} and Jensen's inequality for the Dirichlet part in \eqref{eq:valtemp1}
  to get that
  \begin{equation}
    \mathcal{F} (\m_{\bot})  \geqslant  2 \pi (1 + j )^2 +
    \kappa^2 \int_{- \pi}^{\pi} \sin^2 \theta (t) \mathd t 
    \label{eq:estvaltemp2}
  \end{equation}
  for every $\m_{\bot} \in H^1_{\sharp} ( \Stwo^1, \Stwo^1)$.
  Second, we recall that for every $\kappa^2 > 0$, we have $\mathcal{F}
  (\n) = 2 \pi$, as well as $\mathcal{F} (\tmmathbf{\sigma}) = \pi
  \kappa^2$ for any $\tmmathbf{\sigma} \in \Stwo^1$. Therefore, if
  $\m_{\bot}$ is a minimizer of \eqref{eq:valtemp1}, then
  necessarily
  \begin{equation}
    2 \pi (1 + j )^2 \leqslant \mathcal{F} (\m_{\bot}) \leqslant \min \{ 2
    \pi, \pi \kappa^2 \} . \label{eq:enboundsFS1}
  \end{equation}
  Since $2 \pi (1 + j )^2 > 2 \pi$ when $| 1 + j | > 1$, from the previous
  bounds, we get that the global minimizers of $\mathcal{F}$ in $H^1_{\sharp}
  ( \Stwo^1, \Stwo^1)$ have to satisfy the relation
  \eqref{eq:defj} with $j \in \{ - 2, - 1, 0 \}$. On the other hand, if $j = -
  2$ we get that $\mathcal{F} (\m_{\bot})$ is strictly greater than
  $2 \pi$ because the density $\kappa^2 (\sin \theta)^2$ gives a positive
  contribution when $j \neq 0$. Hence, necessarily $j \in \{ - 1, 0 \}$.
  Hence, recalling \eqref{eq:degreemperp},
  \[ \jdeg \m_{\bot} \in \{ 0, 1 \} . \]
  This proves the first part of the statement. 
  
Now, if $\m_{\bot}$ is a global minimizer of $\mathcal{F}$ and if $\jdeg
\m_{\bot} = 1$, i.e., if $j = 0$, then from \eqref{eq:enboundsFS1} we know
that $2 \pi \leqslant \mathcal{F} (\m_{\bot}) \leqslant \pi \kappa^2$. Hence,
necessarily $\kappa^2 \geqslant 2$ if $\jdeg \m_{\bot} = 1$. It follows that
$\jdeg \m_{\bot} = 0$ whenever $\kappa^2 < 2$. Also, if $\kappa^2 \geqslant
3$, then Theorem~\ref{thm:BFTCylinder} implies that $\pm \n$ are the only two
global minimizers of $\mathcal{F}$, and these belong to the homotopy class
$\jdeg \m_{\bot} = 1$. Therefore, if $\kappa^2 \geqslant 3$, then any global
minimizer has degree one, and if $\kappa^2 < 2$, then any global minimizer has
degree zero.

Note that, at this stage, we cannot infer the existence of a threshold $\kappa_{\ast}^2
\in [2, 3]$ such that any global minimizer has degree one when $\kappa^2 >
\kappa_{\ast}^2$, and any global minimizer has degree zero when $\kappa^2 <
\kappa_{\ast}^2$. Indeed, in principle, it can still be the case that the minimizing homotopy classes are not described by intervals.
However, from the characterization of the minimizer in the homotopy classes
$\jdeg \m_{\bot} = 0$ (given in the proof of ii.) and from the comparison of
the associated energies (given in the proof of iii.), it immediately follows
that there exists a unique $\kappa^2_{\ast}$
\begin{equation}
  \kappa^2_{\ast} \in [2, 3] \label{eq:estkstar}
\end{equation}
such that if $\kappa^2 > \kappa_{\ast}^2$, then any global minimizer has
degree one, and if $\kappa^2 < \kappa_{\ast}^2$, then any global minimizer has
degree zero.

\medskip{\noindent}{\emph{Proof of i.}} It remains to show that for every $\kappa >
0$, the vector fields $\pm \n$ are the only two global minimizers of
$\mathcal{F}$ in the homotopy class $\jdeg \m_{\bot} = 1$. For that, it is
sufficient to observe that when $j = 0$, from \eqref{eq:estvaltemp2}, we get
that $\mathcal{F} (\m_{\bot}) \geqslant 2 \pi =\mathcal{F} (\pm \n)$
regardless of the value of $\kappa^2 > 0$. Moreover, $\mathcal{F} (\m_{\bot})
> 2 \pi$ if $\| \sin \theta \|^2_{L^2 [- \pi, \pi]} \neq 0$, i.e., if
$\m_{\bot} \notin \{\pm \n \}$. This guarantees the uniqueness statement about
the minimizers $\pm \n$.

\medskip{\noindent}{\emph{Proof of ii.}} We want to improve the estimate
\eqref{eq:estkstar}, but we also want to identify the minimizers of
$\mathcal{F}$ in the prescribed homotopy class $\jdeg \m_{\bot} = 0$. This
amounts to the minimization problem for $\mathcal{F}$ under the constraint $j
= - 1$ in \eqref{eq:defj}. In other words, one has to minimize energy
({\tmabbr{cf.}}~\eqref{eq:valtemp1})
  \begin{equation}
    \mathcal{F} (\m_{\bot}) = - 2 \pi + \int_{- \pi}^{\pi} |
    \partial_t \theta (t) |^2 + \kappa^2 \sin^2 \theta (t) \mathd t
    \label{eq:energyjm1}
  \end{equation}
  under the constraint that $\theta (\pi) - \theta (- \pi) = - 2 \pi$
  ({\tmabbr{cf.}}~\eqref{eq:defj}). The Euler--Lagrange equations associated
  with \eqref{eq:energyjm1} gives the equation
  \begin{equation}
    \partial_{t  t} \theta (t) = \kappa^2 \sin \theta (t) \cos \theta (t)
    \label{eq:ELequastionpend},
  \end{equation}
  subject to the degree constraint
  \begin{equation}
    \theta (\pi) = \theta (- \pi) - 2 \pi  . \label{eq:ELequastionpenddeg0}
  \end{equation}
  It is worth noticing that, from the mechanical point of view, the nonlinear
  ordinary differential equation \eqref{eq:ELequastionpend} describes the
  dynamics of an (ideal) inverted pendulum in the reduced setting where the
  pivot point of the pendulum is fixed in space
  ({\tmabbr{cf.}}~Figure~\ref{fig:revpend}). In this reduced setting, the
  equation of the inverted pendulum, up to a sign, is the one of a simple
  pendulum (see, e.g., {\cite[p.~35]{amann2011ordinary}}), the difference
  being in the nominal location of the pivot point which, for the inverted
  pendulum, is below its center of mass. Precisely, if the inverted pendulum
  consists of a spherical mass subject to the force of gravity $g$, placed at
  the end of a rigid massless rod of length $\ell$ then, its equation is given
  by \eqref{eq:ELequastionpend} with $\kappa^2 = g / \ell$. Using this
  mechanical analogy, the minimizers $\m_{\bot}$ in the class $\left\{ \jdeg
  \m_{\bot} = 0 \right\}$ we are interested in, correspond to
  solutions of the inverted pendulum in which the mass $m$ performs a full
  clockwise turn at the minimal cost of the energy \eqref{eq:energyjm1}.
\begin{figure}[t]
   {\includegraphics[width=\textwidth]{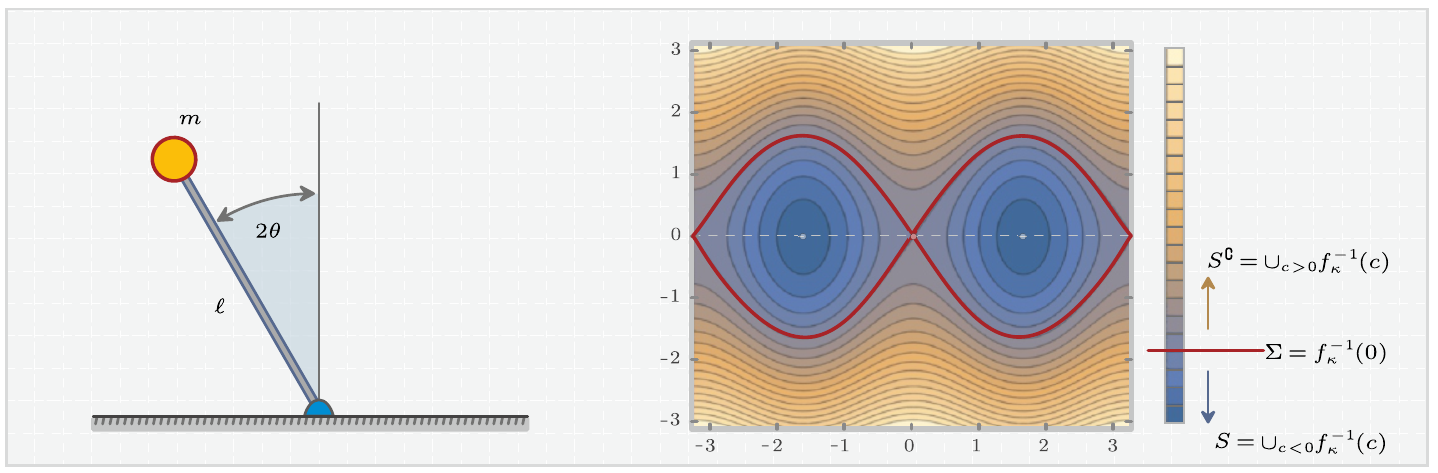}}
    \caption{\label{fig:revpend}(Left) The ideal inverted pendulum consists of
    a spherical mass $m$, subject to the force of gravity $g$, placed at the
    end of a rigid massless rod of length $\ell$ attached to a (possibly
    oscillating) pivot point. When the pivot point of the pendulum is fixed in
    space, the equation of motion, up to a sign, is the one of a simple
    pendulum; the difference is in the nominal location of the pivot point
    which, for the inverted pendulum, is below its center of mass. (Right) The
    typical phase portrait of the inverted pendulum
    \eqref{eq:ELequastionpend}. It consists of the level sets of the function
    $f_{\kappa} (x, y) \assign y^2 - \kappa^2 \sin^2 x$. The maximal quote of the separatrix
    $\Sigma:=f_\kappa^{-1}(0)$ is achieved at $|\kappa|$; here $\kappa^2 = 1.5$. The closed
    level curves correspond to oscillations of the pendulum about its
    equilibrium position $2 \theta = \pm \pi$, while the curves outside the
    separatrix $\Sigma$ correspond to full rotations of the pendulum.
    }
  \end{figure}  
 
  The problem is solvable in terms of elliptic integrals. For that, we observe
  that by multiplying both parts of \eqref{eq:ELequastionpenddeg0} by
  $\partial_t{\theta}$ one gets that if the integral curve
  \begin{equation}
    t \in [- \pi, \pi] \mapsto (\theta (t), \partial_t \theta (t)) \in \RR^2
  \end{equation}
  solves \eqref{eq:ELequastionpenddeg0} then there exists a constant
  $c_{\kappa} \in \RR$ such that
  \begin{equation}
    (\partial_t \theta (t))^2 - \kappa^2 \sin^2 \theta (t) = c_{\kappa} \quad
    \forall t \in [- \pi, \pi] . \label{eq:levelset}
  \end{equation}
  Precisely, $c_{\kappa} \assign | \partial_t \theta (- \pi) |^2 - \kappa^2
  \sin^2 \theta (- \pi)$. In other words, every solution $(\theta (t),
  \partial_t \theta (t))$ of the boundary value problem
  \eqref{eq:ELequastionpenddeg0} belongs to some level set of the function
  $f_{\kappa} (x, y) \assign y^2 - \kappa^2 \sin^2 x$, with the understanding
  that we formally set $y \assign \partial_t \theta$ and $x \assign \theta$.
  The phase diagram is depicted in Figure~\ref{fig:revpend} where the thicker
  line represents the separatrix
  \begin{equation}
    \Sigma \assign \left\{ (x, y) \in \RR^2 \of y^2 = \kappa^2 \sin^2 x
    \right\}
  \end{equation}
  which bounds the region
  \begin{equation}
    S \assign \left\{ (x, y) \in \RR^2 \of y^2 \leqslant \kappa^2 \sin^2 x
    \right\} .
  \end{equation}
  Note that the phase diagram is periodic (in the $x$-direction) of period
  $\pi$. However, it is convenient to consider the range $- \pi \leqslant x =
  \theta \leqslant \pi$ of length $2 \pi$ because we are interested in
  solutions such that $\theta (\pi) - \theta (- \pi) = - 2 \pi$. From the
  phase diagram represented in Figure~\ref{fig:revpend}, it is clear that the
  solutions we are interested in are such that the coordinate $y = \partial_t
  \theta$ never vanishes, as this is the only way to connect, via a phase
  curve, two points of the phase portrait whose $x$ coordinates are $2 \pi$
  away. The rigorous proof follows by observing that the region $S$ includes
  the level sets $\{ f_{\kappa}^{- 1} (c) \}_{c < 0}$ which consist of
  disjoint compact subsets of $S$, each one of them projecting on the $x$-axis
  on a set of diameter less than $\pi$. However, the solution curves $\alpha
  (t) \assign (\theta (t), \partial_t \theta (t))$ we are interested in, have
  to satisfy \eqref{eq:ELequastionpenddeg0} and, therefore, must have a trace
  in the phase space whose projection on the $x$-axis has diameter $2 \pi$. It
  follows that any solution $\alpha (t) \assign (\theta (t), \partial_t \theta
  (t))$ of \eqref{eq:ELequastionpend}-\eqref{eq:ELequastionpenddeg0} lies on
  the level set $f_{\kappa}^{- 1} (c_{\kappa})$ for some $c_{\kappa} \geqslant
  0$. After that, we observe that the solutions lying on the level set
  $f_{\kappa}^{- 1} (0)$ whose projection on the $x$-axis have diameter $2 \pi$ correspond to the normal vector fields $\pm
  \n$; hence, from now on, we focus on solutions in
  $S^{\complement} = \cup_{c > 0} f^{- 1} (c)$. Given the expression
  of $c_{\kappa}$, this implies that
  \begin{equation}
    | \partial_t \theta (- \pi) |^2 > \kappa^2 \sin^2 \theta (- \pi) .
    \label{eq:boundthetapatpi}
  \end{equation}
  First, we note that if $(x, y) \in S^{\complement}$ then $y \neq 0$.
  Therefore, the solutions of
  \eqref{eq:ELequastionpend}-\eqref{eq:ELequastionpenddeg0} are such that
  $\partial_t \theta (t) \neq 0$ for every $t \in [- \pi, \pi]$. Thus,
  $\theta$ is either decreasing or increasing. But given the degree condition
  \eqref{eq:ELequastionpenddeg0} the solutions we are looking for have to be
  decreasing. Therefore, from \eqref{eq:levelset}, we get that
  \begin{equation}
    \partial_t \theta (t) = - \sqrt{| \partial_t \theta (- \pi) |^2 -
    \kappa^2 \sin^2 \theta (- \pi) + \kappa^2 \sin^2 \theta (t)} .
  \end{equation}
  It is convenient to set
  \begin{equation}
    \alpha_{\kappa} \assign \sqrt{| \partial_t \theta (- \pi) |^2 - \kappa^2
    \sin^2 \theta (- \pi)}, \label{eq:newexpralkappa}
  \end{equation}
  because, as we are going to show, the value of $\alpha_{\kappa}$ just
  defined coincides with the value defined by \eqref{eq:uniqsolalpha}. Note
  that $\alpha_{\kappa} > 0$ due to \eqref{eq:boundthetapatpi}. The expression of $\partial_t \theta
  (t)$ can be rewritten under the form
  \begin{equation}
    \partial_t \theta (t) = - \alpha_{\kappa} \sqrt{1 + (\kappa^2 /
    \alpha^2_{\kappa}) \sin^2 \theta (t)} . \label{eq:exprthetaprime}
  \end{equation}
  Next, we observe that the elliptic integral of the first kind $F_{\kappa}$ (defined in \eqref{eq:EllipticErev})
  is increasing (invertible) and vanishes at $-\pi$. Therefore from \eqref{eq:exprthetaprime} we deduce that
  \begin{equation}
    F_{\kappa} (\theta (t)) - F_{\kappa} (\theta (- \pi)) = - (t + \pi)
    \alpha_\kappa .
  \end{equation}
  In particular, evaluating the previous relation at $t = \pi$ and taking into
  account \eqref{eq:ELequastionpenddeg0} we obtain
  \begin{equation}
    F_{\kappa} (\theta (\pi)) - F_{\kappa} (\theta (- \pi)) = - 2 \pi
    \alpha_\kappa . \label{eq:forthecharofalpha1}
  \end{equation}
  On the other hand, the integrand defining $F_{\kappa}$ is periodic of period
  $\pi$ and, therefore, taking also into account that $\theta (\pi) = \theta
  (- \pi) - 2 \pi$, we obtain
  \begin{align}
    F_{\kappa} (\theta (\pi)) - F_{\kappa} (\theta (- \pi)) =\; &
    \int_{\theta (- \pi)}^{\theta (- \pi) - 2 \pi} \frac{1}{\sqrt{1 +
    (\kappa^2 / \alpha^2_{\kappa}) \sin^2 x}} \mathd x \nonumber\\
    =\; & - \int_{- \pi}^{\pi} \frac{1}{\sqrt{1 + (\kappa^2 /
    \alpha^2_{\kappa}) \sin^2 x}} \mathd x \nonumber\\
    =\; & - 2 \pi \beta_{\kappa} ,  \label{eq:forthecharofalpha2}
  \end{align}
 where we set $\beta_{\kappa} \assign 1/(2 \pi) F_\kappa(\pi)$.
  From \eqref{eq:forthecharofalpha1} and \eqref{eq:forthecharofalpha2} it
  follows that if $(\theta (t), \partial_t \theta (t))$ is a solution of our
  problem \eqref{eq:ELequastionpend}-\eqref{eq:ELequastionpenddeg0} then
  necessarily $\alpha_{\kappa} = \beta_{\kappa}$. Therefore the value of
  $\alpha_{\kappa}$ can be characterized as the unique solution of the
  equation ({\tmabbr{cf.}}~\eqref{eq:uniqsolalpha})
  \begin{equation}
    \frac{1}{2 \pi} \int_{- \pi}^{\pi} \frac{1}{\sqrt{\alpha^2_{\kappa} +
    \kappa^2 \sin^2 x}} \mathd x = 1.
  \end{equation}
  Once computed $\alpha_\kappa$, we can characterize the solutions of
  \eqref{eq:ELequastionpend}-\eqref{eq:ELequastionpenddeg0} using
  \eqref{eq:forthecharofalpha1}, which gives the one-parameter family of
  functions $\theta (t) = F^{- 1}_{\kappa} \left( F_{\kappa} (\theta (- \pi))
  - (t + \pi) \alpha_{\kappa}\right)$, $\theta (- \pi) \in \RR$, which,
  by the way, is of the form
  \begin{equation}
    \theta (t) = \tmop{am}_{\kappa} (- \alpha_\kappa t + b_{\kappa}), \qquad
    b_{\kappa} \assign F_{\kappa} (\theta (- \pi)) - \pi \alpha_{\kappa} \in
    \RR . \label{eq:oneparfamsols}
  \end{equation}
  This proves \eqref{eq:minmdegzero} and gives a parameterization of the
  family of solutions in terms of the initial condition $\theta (- \pi)$, or
  in terms of the initial condition $\partial_t \theta (- \pi)$ due to
  \eqref{eq:newexpralkappa}.
  
  In principle, the energy can depend on $b_{\kappa}$, but this is not the
  case as we are going to show next. For that, we observe that from
  \eqref{eq:exprthetaprime} we get that
  \begin{equation}
    | \partial_t \theta (t) |^2 = \alpha_{\kappa}^2 + \kappa^2 \sin^2
    \theta (t) .
  \end{equation}
  Plugging the previous expression into the energy functional
  \eqref{eq:energyjm1}, we obtain that if $\m_{\bot}$ minimizes the energy in
  the homotopy class $\left\{ \jdeg \m_{\bot} = 0 \right\}$, then
  \begin{equation}
    \mathcal{F} (\m_{\bot}) = - 2 \pi (1 + \alpha_{\kappa}^2) + 2
    \int_{- \pi}^{\pi} | \partial_t \theta (t) |^2 \mathd t.
    \label{eq:jnewexpren}
  \end{equation}
  Next, we observe that since $\theta$ is a decreasing function, we have that
  \begin{align}
    \int_{- \pi}^{\pi} | \partial_t \theta (t) |^2 \mathd t =\; &
    \int_{\theta (\pi)}^{\theta (- \pi)}  | (\partial_t \theta  \circ
    \theta^{- 1}) (x) |^2  | \partial_x \theta^{- 1} (x) | \mathd x
    \nonumber\\
    =\; & \int_{\theta (\pi)}^{\theta (- \pi)}  \frac{1}{| \partial_x
    \theta^{- 1} (x) |} \mathd x. 
  \end{align}
  But from \eqref{eq:oneparfamsols} we know that $\theta^{- 1} (x) = \frac{b_\kappa
  - F_{\kappa} (x)}{\alpha_\kappa}$ and, therefore
  \begin{align}
    \int_{- \pi}^{\pi} | \partial_t \theta (t) |^2 \mathd t =\; &
    \alpha_{\kappa} \int_{\theta (\pi)}^{\theta (- \pi)}  \frac{1}{|
    F'_{\kappa} (x) |} \mathd x \nonumber\\
    =\; & \alpha_{\kappa} \int_{\theta (\pi)}^{\theta (- \pi)}  \sqrt{1 +
    (\kappa^2 / \alpha^2_{\kappa}) \sin^2 x} \, \mathd x. 
  \end{align}
  Making use of the $\pi$-periodicity of the integrand, from \eqref{eq:EllipticE} and \eqref{eq:ELequastionpenddeg0} we infer that
  \begin{equation}
    \int_{- \pi}^{\pi} | \partial_t \theta (t) |^2 \mathd t =
    \alpha_{\kappa} \int_{- \pi}^{\pi}  \sqrt{1 + (\kappa^2 /
    \alpha^2_{\kappa}) \sin^2 x} \, \mathd x =  \alpha_{\kappa} E_{\kappa}.
  \end{equation}
  Overall, plugging the previous expression into the expression
  \eqref{eq:jnewexpren} of the energy, we get
  \begin{equation}
    \mathcal{F} (\m_{\bot}) = - 2 \pi (1 + \alpha_{\kappa}^2) + 2
    \alpha_{\kappa} E_\kappa
    \label{eq:enlevdegreezerotemp}
  \end{equation}
  and this proves \eqref{eq:minendegzero}.
  
  {\noindent}{\emph{Proof of iii.}} The proof quickly follows from {\emph{i}}
  and {\emph{ii}} because the energy of the normal vector fields $\pm \n$
  evaluates to $2 \pi$ and, therefore, due to \eqref{eq:enlevdegreezerotemp},
  degree zero configurations given by \eqref{eq:oneparfamsols} are preferred
  as soon as
  \begin{equation}
    - 2 \pi (1 + \alpha_{\kappa}^2)+ 2
    \alpha_{\kappa} E_\kappa < 2 \pi .
  \end{equation}
  This happens when $\kappa^2 < \kappa^2_{\ast} \approx 2.31742$.
  
  {\noindent}{\emph{Proof of iv.}} Finally, we want to show that the degree
  zero solutions \eqref{eq:oneparfamsols}, which we know to be global
  minimizers when $\kappa^2 < \kappa^2_{\ast}$, retain local stability for
  every $\kappa^2 > 0$. For that, we compute the second
  variation of the energy \eqref{eq:enmovframVal} which, for any $\phi \in
  H^1_{\sharp} ( [- \pi, \pi], \RR)$, reads as
  ({\tmabbr{cf.}}~\eqref{eq:enmovframVal22})
  \begin{equation}
    \mathcal{F}'' (\theta) (\phi) = \int_{- \pi}^{\pi} | \partial_t \phi (t)
    |^2 + \kappa^2 \phi^2 (t) \cos 2 \theta (t) \hspace{0.17em} \mathd t.
    \label{eq:secvarintheta}
  \end{equation}
  From the Euler-Lagrange equations \eqref{eq:ELequastionpend} we get that
  \begin{equation}
    \partial_{t t t} \theta = \kappa^2  (\cos 2 \theta) \partial_t \theta
    \label{eq:ELder} .
  \end{equation}
  Moreover, since $| \partial_t \theta | \geqslant c'_{\theta} > 0$ on the
  compact set $[- \pi, \pi]$ for some $c_{\theta} > 0$, we can use the Hardy
  decomposition trick (see
  {\cite{Di_Fratta_2015,Ignat2015,Ignat2016a,Ignat2016,Ignat2020}}) and say
  that any $\phi \in H^1_{\sharp} ( [- \pi, \pi], \RR)$ can be
  written as $\phi = (\partial_t \theta) \psi$ for some $\psi \in H^1_{\sharp}
  ( [- \pi, \pi], \RR)$. Therefore
  \begin{equation}
    \mathcal{F}'' (\theta) (\phi) = \int_{- \pi}^{\pi} | \partial_t \theta |^2
    | \partial_t \psi |^2 + | \partial_{t t} \theta |^2  | \psi |^2 +
    \partial_{t t} \theta \partial_t \theta \partial_t \left| {\psi^2} 
    \right| + \kappa^2  | \psi |^2 | \partial_t \theta |^2 (\cos 2 \theta) 
    \hspace{0.17em} \mathd t.
  \end{equation}
  Integrating by parts the previous expression and making use of
  \eqref{eq:ELder} we obtain
  \begin{equation}
    \mathcal{F}'' (\theta) (\phi) \, = \, \int_{- \pi}^{\pi} | \partial_t
    \theta |^2  | \partial_t \psi |^2 \hspace{0.17em} \mathd t \, \geqslant \,
    (c'_{\theta})^2 \int_{- \pi}^{\pi} | \partial_t \psi |^2 \hspace{0.17em}
    \mathd t.
  \end{equation}
  Finally, plugging $\theta = 0$ into \eqref{eq:secvarintheta}, we get that
  $\pm \n$ are uniform locally stable critical points for every
  $\kappa^2 > 0$. Therefore, since the lifting \eqref{eq:liftSob} preserves small $H^1$ neighborhoods, we get that $\pm \n$ are local minimizers of the energy $\mathcal{F}$. This
  concludes the proof.
\end{proof}

\section{Acknowledgments}

{\tmabbr{G.~Di~F.}} acknowledges support from the Austrian Science Fund (FWF)
through the special research program {\emph{Taming complexity in partial
differential systems}} (Grant SFB F65) and the  project
\emph{Analysis and Modeling of Magnetic Skyrmions} (grant P-34609). G. Di F. also thanks TU Wien and MedUni
Wien for their support and hospitality.

The work of the {\tmabbr{V.~S.}} was
supported by the EPSRC grant EP/K02390X/1 and the Leverhulme grant
RPG-2018-438. {\tmabbr{G.~Di~F.}} and {\tmabbr{V.~S.}} would like to thank the
Max Planck Institute for Mathematics in the Sciences in Leipzig for support
and hospitality. {\tmabbr{G.~Di~F.}}, {\tmabbr{A.~F.}}, and {\tmabbr{V.~S.}}
acknowledge support from ESI, the Erwin Schr{\"o}dinger International
Institute for Mathematics and Physics in Wien, given in occasion of the
Workshop on {\emph{New Trends in the Variational Modeling and Simulation of
Liquid Crystals}} held at ESI, in Wien, on December 2-6, 2019.

{\noindent}{\tmname{Giovanni Di Fratta}}, Dipartimento di Matematica e Applicazioni “R. Caccioppoli”, Università degli Studi di Napoli “Federico II”, Via Cintia, Complesso Monte S. Angelo, 80126 Napoli, Italy. {\emph{E-mail address}}: {\tmsamp{giovanni.difratta@unina.it}}

{\noindent}{\tmname{Alberto Fiorenza}}, Dipartimento di Architettura,
Universit{\`a} di Napoli Federico II, Via Monteoliveto, 3, I-80134 Napoli,
Italy, and Istituto per le Applicazioni del Calcolo ``Mauro Picone'', sezione
di Napoli, Consiglio Nazionale delle Ricerche, via Pietro Castellino, 111,
I-80131 Napoli, Italy.
{\emph{E-mail address}}: {\tmsamp{fiorenza@unina.it}}

{\noindent}{\tmname{Valeriy Slastikov}}, School of Mathematics, University of
Bristol, University Walk, Bristol BS8 1TW, United Kingdom.
{\emph{E-mail address}}: {\tmsamp{valeriy.slastikov@bristol.ac.uk}}

\end{document}